\newtheorem{thm}{Theorem}
\newtheorem{prop}[thm]{Proposition}
\newtheorem{lemma}[thm]{Lemma}
\newtheorem{corollary}[thm]{Corollary}
\DeclareMathOperator*{\argmin}{argmin}
\DeclareMathOperator*{\argmax}{argmax}
\begin{document}

\title{Global rates of convergence in log-concave density estimation}
\author{Arlene K. H. Kim and Richard J. Samworth\\
Statistical Laboratory, University of Cambridge}

\date{(\today)}

\maketitle
\begin{abstract}
The estimation of a log-concave density on $\mathbb{R}^d$ represents a central problem in the area of nonparametric inference under shape constraints.  In this paper, we study the performance of log-concave density estimators with respect to global loss functions, and adopt a minimax approach.  We first show that no statistical procedure based on a sample of size $n$ can estimate a log-concave density with respect to the squared Hellinger loss function with supremum risk smaller than order $n^{-4/5}$, when $d=1$, and order $n^{-2/(d+1)}$ when $d \geq 2$.  In particular, this reveals a sense in which, when $d \geq 3$, log-concave density estimation is fundamentally more challenging than the estimation of a density with two bounded derivatives (a problem to which it has been compared).  Second, we show that for $d \leq 3$, the Hellinger $\epsilon$-bracketing entropy of a class of log-concave densities with small mean and covariance matrix close to the identity grows like $\max\{\epsilon^{-d/2},\epsilon^{-(d-1)}\}$ (up to a logarithmic factor when $d=2$).  This enables us to prove that when $d \leq 3$ the log-concave maximum likelihood estimator achieves the minimax optimal rate (up to logarithmic factors when $d = 2,3$) with respect to squared Hellinger loss.
\end{abstract}

\section{Introduction}
\label{Sec:Intro}

Log-concave densities on $\mathbb{R}^d$, namely those expressible as the exponential of a concave function that takes values in $[-\infty,\infty)$, form a particularly attractive infinite-dimensional class.  Gaussian densities are of course log-concave, as are many other well-known parametric families, such as uniform densities on convex sets, Laplace densities and many others.  Moreover, the class retains several of the properties of normal densities that make them so widely-used for statistical inference, such as closure under marginalisation, conditioning and convolution operations.  On the other hand, the set is small enough to allow fully automatic estimation procedures, e.g.\ using maximum likelihood, where more traditional nonparametric methods would require troublesome choices of smoothing parameters.  Log-concavity therefore offers statisticians the potential of freedom from restrictive parametric (typically Gaussian) assumptions without paying a hefty price.  Indeed, in recent years, researchers have sought to exploit these alluring features to propose new methodology for a wide range of statistical problems, including the detection of the presence of mixing \citep{Walther2002}, tail index estimation \citep{MullerRufibach2009}, clustering \citep{CSS2010}, regression \citep{DSS2011}, Independent Component Analysis \citep{SamworthYuan2012} and classification \citep{ChenSamworth2013}.  

However, statistical procedures based on log-concavity, in common with other methods based on shape constraints, present substantial computational and theoretical challenges and these have therefore also been the focus of much recent research.  For instance, the maximum likelihood estimator of a log-concave density, first studied by \citet{Walther2002} in the case $d=1$, and by \citet{CSS2010} for general $d$, plays a central role in all of the procedures mentioned in the previous paragraph.  \citet{DHR2011} developed a fast, Active Set algorithm for computing the estimator when $d=1$, and this is implemented in the \textbf{R} package \texttt{logcondens} \citep{RufibachDumbgen2006,DumbgenRufibach2011}.  For general $d$, a slower, non-smooth optimisation method based on Shor's $r$-algorithm is implemented in the \textbf{R} package \texttt{LogConcDEAD} \citep{CGSC2007,CGS2009}; see also \citet{KoenkerMizera2010} for an alternative approximation approach based on interior point methods.  On the theoretical side, through a series of papers \citep{PWM2007,DumbgenRufibach2009,SereginWellner2010,SchuhmacherDumbgen2010,CuleSamworth2010,DSS2011}, we now have a fairly complete understanding of the global consistency properties of the log-concave maximum likelihood estimator (even under model misspecification).  

Results on the global rate of convergence in log-concave density estimation are, however, less fully developed, and in particular have been confined to the case $d=1$.  For a fixed true log-concave density $f_0$ belonging to a H\"older ball of smoothness $\beta \in [1,2]$, \citet{DumbgenRufibach2009} studied the supremum distance over compact intervals in the interior of the support of $f_0$.  They proved that the log-concave maximum likelihood estimator $\hat{f}_n$ based on a sample of size $n$ converges in these metrics to $f_0$ at rate $O_p(\rho_n^{-\beta/(2\beta+1)})$, where $\rho_n := n/\log n$; thus $\hat{f}_n$ attains the same rates in the stated regimes as other adaptive nonparametric estimators that do not satisfy the shape constraint.  Very recently, \citet{DossWellner2015} introduced a new bracketing argument to obtain a rate of convergence of $O_p(n^{-4/5})$ in squared Hellinger distance in the case $d=1$, again for a fixed true log-concave density $f_0$.

In this paper, we present several new results on global rates of convergence in log-concave density estimation, with a focus on a minimax approach.  We begin by proving, in Theorem~\ref{Thm:LowerBound} in Section~\ref{Sec:LowerBounds}, a non-asymptotic minimax lower bound which shows that for the squared Hellinger loss function defined in~\eqref{Eq:Hellinger} below, no statistical procedure based on a sample of size $n$ can estimate a log-concave density with supremum risk smaller than order $n^{-4/5}$ when $d = 1$, and order $n^{-2/(d+1)}$ when $d \geq 2$.  The surprising feature of this result is that it is often thought that estimation of log-concave densities should be similar to the estimation of densities with two bounded derivatives, for which the minimax rate is known to be $n^{-4/(d+4)}$ for all $d \in \mathbb{N}$ \citep{IbragimovKhasminskii1983}.  The reasoning for this intuition appears to be Aleksandrov's theorem \citep{Aleksandrov1939}, which states that a convex function on $\mathbb{R}^d$ is twice differentiable (Lebesgue) almost everywhere in its domain, and the fact that for twice continuously differentiable functions, convexity is equivalent to a second derivative condition, namely that the Hessian matrix is non-negative definite.  Thus, our minimax lower bound reveals that while this intuition is valid when $d \leq 2$ (note that $4/(d+4) = 2/(d+1) = 2/3$ when $d=2$), log-concave density estimation in three or more dimensions is fundamentally more challenging in this minimax sense than estimating a density with two bounded derivatives.

The second main purpose of this paper is to provide bounds on the supremum risk with respect to the squared Hellinger loss function of a particular estimator, namely the log-concave maximum likelihood estimator $\hat{f}_n$.  The empirical process theory for studying maximum likelihood estimators is well-known \citep[e.g.][]{vanderVaartWellner1996,vandeGeer2000}, but relies on obtaining a bracketing entropy bound, which therefore becomes our main challenge.  A first step is to show that after standardising the data, and using the affine equivariance of the estimator, we can reduce the problem to maximising over a class $\mathcal{G}$ of log-concave densities having a small mean and covariance matrix close to the identity (cf. Lemma~\ref{Lemma:ThreeTerms} in the Appendix).  In Corollary~\ref{Cor:Transform} in Section~\ref{SubSec:IntEnv}, we derive an integrable envelope function for such classes, relying on certain properties of distributional limits of sequences of log-concave densities developed in Section~\ref{SubSec:Conv}.

The first part of Section~\ref{Sec:Bracketing} is devoted to developing the key bracketing entropy results for the class $\mathcal{G}$.  In particular, we show that the $\epsilon$-bracketing number of $\mathcal{G}$ in Hellinger distance~$h$, denoted $N_{[]}(\epsilon,\mathcal{G},h)$ and defined at the beginning of Section~\ref{Sec:Bracketing}, satisfies
\begin{equation}
\label{Eq:Asymp}
\log N_{[]}(\epsilon,\mathcal{G},h) \gtrsim \max\{\epsilon^{-d/2},\epsilon^{-(d-1)}\}.
\end{equation}
The second term on the right-hand side of~\eqref{Eq:Asymp}, which dominates the first when $d \geq 3$, is somewhat unexpected in view of standard bracketing bounds for classes of convex functions on a compact domain taking values in $[0,1]$ \citep[e.g.][]{vanderVaartWellner1996,GuntuboyinaSen2013}, where only the first term on the right-hand side of~(\ref{Eq:Asymp}) appears.  Roughly speaking, it arises from the potential complexity of the domains of the log-densities.  Moreover, for $d \leq 3$, we obtain matching upper bounds, up to a logarithmic factor when $d=2$.  These upper bounds rely on intricate calculations of the bracketing entropy of classes of bounded, concave functions on an arbitrary closed, convex domain.  Further details on these bounds can be found in Section~\ref{Sec:Bracketing}.  

In the second part of Section~\ref{Sec:Bracketing}, we apply the bracketing entropy bounds described above to deduce that  
\begin{equation}
\label{Eq:Rates}
\sup_{f_0 \in \mathcal{F}_d} \mathbb{E}_{f_0}\{h^2(\hat{f}_n,f_0)\} = \left\{ \begin{array}{ll} O(n^{-4/5}) & \mbox{if $d=1$} \\
O(n^{-2/3}\log n) & \mbox{if $d=2$} \\
O(n^{-1/2}\log n) & \mbox{if $d=3$,} \end{array} \right.
\end{equation}
where $\mathcal{F}_d$ denotes the set of upper semi-continuous, log-concave densities on $\mathbb{R}^d$.  Thus, for $d \leq 3$, the log-concave maximum likelihood estimator attains the minimax optimal rate of convergence with respect to the squared Hellinger loss function, up to logarithmic factors when $d = 2,3$.  The stated rate when $d = 3$ is slower in terms of the exponent of $n$ than had been conjectured in the literature \citep[e.g.][p.~3778]{SereginWellner2010}, and arises as a consequence of the bracketing entropy being of order $\epsilon^{-(d-1)} = \epsilon^{-2}$ for this dimension. 


It is interesting to note that the logarithmic penalties that appear in~\eqref{Eq:Rates} when $d=2,3$ occur for different reasons.  When $d=2$, the penalty arises from the logarithmic gap between the lower and upper bounds for the relevant bracketing entropy.  When $d=3$, the bracketing bound is sharp up to multiplicative constants, and the logarithmic penalty is due to the divergence of the bracketing entropy integral that plays the crucial role in the empirical process theory.  The bracketing entropy lower bound in~\eqref{Eq:Asymp} suggests (but does not prove) that the log-concave maximum likelihood estimator will be rate suboptimal for $d \geq 4$; indeed, \citet{BirgeMassart1993} give an example of a situation where the maximum likelihood estimator has a suboptimal rate of convergence agreeing with that predicted by the same empirical process theory from which we derive our rates.

All of our proofs are deferred to the Appendix, where we also give various auxiliary results.  We conclude this section by highlighting some related research on the pointwise rate of convergence of the log-concave maximum likelihood estimator.  \citet{BRW2009} proved that in the case $d=1$, if $f_0(x_0) > 0$ and $f_0$ is twice continuously differentiable in a neighbourhood of $x_0$ with $\phi_0''(x_0) < 0$, where $\phi_0 := \log f_0$, then $n^{2/5}\{\hat{f}_n(x_0) - f_0(x_0)\}$ converges to a non-degenerate limiting distribution related to the `lower invelope' of an integrated Brownian motion process minus a drift term.  \citet{SereginWellner2010} also derived a minimax lower bound for estimation of $f_0(x_0)$ with respect to absolute error loss of order $n^{-2/(d+4)}$, provided that $x_0$ is an interior point of the domain of $\log f_0$ and $\log f_0$ is locally strongly concave at $x_0$.  

\section{Minimax lower bounds}
\label{Sec:LowerBounds}

Let $\mu_d$ denote Lebesgue measure on $\mathbb{R}^d$, and recall that $\mathcal{F}_d$ denotes the set of upper semi-continuous, log-concave densities with respect to $\mu_d$, equipped with the $\sigma$-algebra it inherits as a subset of $L_1(\mathbb{R}^d)$.  Thus each $f \in \mathcal{F}_d$ can be written as $f = e^\phi$, for some upper semi-continuous, concave $\phi:\mathbb{R}^d \rightarrow [-\infty,\infty)$; in particular, we do not insist that $f$ is positive everywhere.  Let $X_1,\ldots,X_n$ be independent and identically distributed random vectors having some density $f \in \mathcal{F}_d$, and let $\mathbb{P}_f$ and $\mathbb{E}_f$ denote the corresponding probability and expectation operators, respectively.  An \emph{estimator} $\tilde{f}_n$ of $f$ is a measurable function from $(\mathbb{R}^d)^{\times n}$ to the class of probability densities with respect to $\mu_d$, and we write $\tilde{\mathcal{F}}_n$ for the class of all such estimators.  For $f,g \in L_1(\mathbb{R}^d)$, we define their squared Hellinger distance by 
\begin{equation}
\label{Eq:Hellinger}
h^2(f,g) := \int_{\mathbb{R}^d} (f^{1/2} - g^{1/2})^2 \, d\mu_d.
\end{equation}
This metric is both affine invariant and particularly convenient for studying maximum likelihood estimators.  Adopting a minimax approach, we define the \emph{supremum risk}
\[
R(\tilde{f}_n,\mathcal{F}_d) := \sup_{f \in \mathcal{F}_d} \mathbb{E}_f\bigl\{h^2(\tilde{f}_n,f)\};
\]
our aim in this section is to provide a lower bound for the infimum of $R(\tilde{f}_n,\mathcal{F}_d)$ over $\tilde{f}_n \in \tilde{\mathcal{F}}_n$.  
\begin{thm}
\label{Thm:LowerBound}
For each $d \in \mathbb{N}$, there exists $c_d > 0$ such that for every $n \geq d+1$,
\[
\inf_{\tilde{f}_n \in \tilde{\mathcal{F}}_n} R(\tilde{f}_n,\mathcal{F}_d) \geq \left\{ \begin{array}{ll} c_1 n^{-4/5} & \mbox{if $d=1$} \\
c_d n^{-2/(d+1)} & \mbox{if $d \geq 2$.} \end{array} \right.
\]
\end{thm}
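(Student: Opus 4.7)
The plan is to apply a many-hypothesis minimax lower bound of Assouad type. For each $d$ I construct a finite family $\{f_\tau : \tau \in \{0,1\}^m\} \subset \mathcal{F}_d$ of log-concave densities and verify two conditions: (i) $h^2(f_\tau, f_{\tau'}) \gtrsim \eta\,\rho_H(\tau,\tau')$ for all $\tau \neq \tau'$, where $\rho_H$ denotes Hamming distance; and (ii) $n\, h^2(f_\tau, f_{\tau'}) \leq c_0 < 2$ whenever $\rho_H(\tau,\tau') = 1$. Combined with the tensorisation bound
\[
\|f_\tau^{\otimes n} - f_{\tau'}^{\otimes n}\|_{\mathrm{TV}}^2 \; \leq \; 1 - \bigl(1 - \tfrac{1}{2} h^2(f_\tau,f_{\tau'})\bigr)^{2n} \; \leq \; n\, h^2(f_\tau,f_{\tau'}),
\]
Assouad's lemma yields $\inf_{\tilde f_n} R(\tilde f_n, \mathcal{F}_d) \gtrsim m\eta$, and the proof reduces to choosing $m$ and $\eta$ so that $m\eta$ matches the claimed rate.

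For $d = 1$ the construction exploits log-concavity as an effective second-order smoothness constraint. Fix a smooth, compactly supported, strongly log-concave base $f_0 = e^{\phi_0}$ with $\phi_0'' \leq -\kappa$ on its support, and a zero-mean $C^2$ profile $b$ supported in $[-1/2, 1/2]$ with $\|b''\|_\infty \leq 1$. Set $w := \delta n^{-1/5}$ for small $\delta > 0$ and place $m := \lfloor 1/(2w) \rfloor$ disjoint rescalings $\psi_j(x) := \kappa w^2 \, b\bigl((x - x_j)/w\bigr)$ in the interior of the support of $f_0$. Define
\[
f_\tau(x) \; := \; Z_\tau^{-1} \exp\Bigl(\phi_0(x) + \sum_{j=1}^m \tau_j\, \psi_j(x)\Bigr).
\]
Log-concavity is preserved because $(\phi_0 + \sum_j \tau_j \psi_j)''(x) \leq 0$ everywhere. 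A Taylor expansion, exploiting the disjointness and zero mean of the $\psi_j$, gives $h^2(f_\tau, f_{\tau'}) \asymp \rho_H(\tau,\tau') \, w^5$, the normalising-constant contribution being of lower order. Choosing $\delta$ small secures (ii); then (i) holds with $\eta \asymp w^5 \asymp n^{-1}$ and $m\eta \asymp n^{1/5} \cdot n^{-1} = n^{-4/5}$.

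For $d \geq 2$ the mechanism is geometric rather than smoothness-based. Let $B \subset \mathbb{R}^d$ be the closed unit Euclidean ball, set $\epsilon := \delta n^{-2/(d+1)}$, and choose unit vectors $\xi_1, \ldots, \xi_m \in S^{d-1}$ with pairwise angular separation exceeding $3\sqrt{\epsilon}$, so that the spherical caps $C_j := B \cap \{x : \langle x, \xi_j \rangle \geq 1 - \epsilon\}$ are pairwise disjoint; a standard sphere-packing argument yields $m \asymp \epsilon^{-(d-1)/2}$. For each $\tau \in \{0,1\}^m$, put
\[
B_\tau \; := \; B \cap \bigcap_{j : \tau_j = 1} \{x : \langle x, \xi_j \rangle \leq 1 - \epsilon\},
\]
which is convex, and let $f_\tau := |B_\tau|^{-1} \, \mathbf{1}_{B_\tau}$; this is the uniform density on the convex body $B_\tau$, hence log-concave. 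Since $|C_j| \asymp \epsilon^{(d+1)/2}$ and $|B_\tau|$ is comparable to $|B|$, a direct calculation gives
\[
h^2(f_\tau, f_{\tau'}) \; \asymp \; |B|^{-1} \, |B_\tau \triangle B_{\tau'}| \; \asymp \; \rho_H(\tau, \tau') \, \epsilon^{(d+1)/2}.
\]
With $\delta$ small, condition (ii) is met and (i) holds with $\eta \asymp \epsilon^{(d+1)/2} \asymp n^{-1}$, so that $m\eta \asymp \epsilon^{-(d-1)/2} \, n^{-1} \asymp n^{-2/(d+1)}$.

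The main obstacle is the mutual singularity of neighbouring hypotheses in the $d \geq 2$ construction: when $\rho_H(\tau, \tau') = 1$, each of $f_\tau, f_{\tau'}$ vanishes on a cap where the other is positive, so Kullback--Leibler and $\chi^2$ divergences are infinite and cannot drive the Assouad bound. It is therefore essential to work entirely with Hellinger distance and invoke the tensorisation inequality displayed above, which remains finite irrespective of absolute continuity. A secondary task is to make the bound genuinely non-asymptotic for every $n \geq d+1$: this is handled by shrinking the geometric constant $\delta$ and the final constant $c_d$ enough to absorb the finitely many small-$n$ cases, where the claim is trivially true.
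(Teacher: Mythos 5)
Your proposal is correct and follows the same overall skeleton as the paper: a hypercube of log-concave hypotheses, a Hellinger-based Assouad bound (the paper's Lemma~\ref{Lemma:Kim} plays exactly the role of your tensorisation inequality, requiring $h^2 \leq C/n$ for Hamming neighbours), and, for $d \geq 2$, essentially the identical construction of uniform densities on a Euclidean ball with $\asymp \epsilon^{-(d-1)/2}$ disjoint caps of height $\epsilon \asymp n^{-2/(d+1)}$ sliced off (the paper parametrises by the length scale $n^{-1/(d+1)}$ and cap height $\epsilon^2/2$, which is the same thing). The genuine difference is in $d=1$ and in how normalisation is handled. For $d=1$ the paper perturbs a raised semicircle, swapping each of $K$ arcs for its chord, so every $f_\alpha$ is concave on its support and, crucially, all perturbations are arranged in reflected pairs so that the normalising constant $c_{r,K,\epsilon}$ is identical across the whole family; similarly in $d\ge 2$ the paper removes exactly one cap from each reflected pair, so all $f_\alpha$ are uniform on sets of equal volume and Hellinger distances can be computed exactly. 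Your route instead adds smooth, zero-mean, second-order bumps $\kappa w^2 b((x-x_j)/w)$ to a strongly concave log-density, which preserves log-concavity via the curvature margin $\kappa$ and yields the same $w^5 \asymp n^{-1}$ per-bump Hellinger separation; the price is that $Z_\tau$ (and $|B_\tau|$ in $d\ge2$) varies with the hypothesis, so you must verify, as you indicate, that the normalisation contribution to $h^2$ is of lower order — routine, but a step the paper's paired construction avoids entirely. One small side benefit of the paper's choice is that its family consists of densities concave on their support (uniform on convex bodies when $d\ge2$), which yields the stronger remark that the same lower bounds hold for those smaller classes; your smooth $d=1$ family does not give that, but it is not needed for the theorem. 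Your handling of the non-asymptotic claim for small $n \geq d+1$ (shrink $\delta$ and $c_d$, noting the fixed-$n$ risk is bounded below by a two-point argument) is acceptable, matching in spirit the paper's explicit constant tracking.
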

Theorem~\ref{Thm:LowerBound} reveals that when $d \geq 3$, the minimax lower bound rate for global loss functions is different from that for interior point estimation established under the local strong log-concavity condition in \citet{SereginWellner2010}.  

Our proof relies on a variant of Assouad's cube method; see, for example, \citet[][p.~347]{vanderVaart1998} or \citet[][pp.~118--9]{Tsybakov2009}.  We handle the cases $d=1$ and $d \geq 2$ separately.  For $d=1$, we bound the risk below by the risk over a finite subset of $\mathcal{F}_1$ consisting of densities that are perturbations of a semicircle $y = (r^2 - x^2)^{1/2}$ (it is convenient to raise the semicircle to be bounded away from zero on its domain so that the squared Hellinger distance can be bounded above in terms of the squared $L_2$-distance).  The perturbations are constructed by first dividing the upper portion of the semicircle into $K$ pairs of arcs, with each element of the pair being a reflection in the $y$-axis of the other.  For each $\alpha = (\alpha_1,\ldots,\alpha_K)^T \in \{0,1\}^K$ and $k=1,\ldots,K$, if $\alpha_k=1$, the $\alpha$th perturbation function $f_\alpha$ replaces the arc in the $k$th pair corresponding to $x > 0$ with a straight line joining its endpoints and retains the other arc in the pair; if $\alpha_k=0$, we reverse the roles of the two arcs in the pair.  Each function $f_\alpha$ is concave on its support $[-r,r]$, and is contructed to be a density; Assouad's lemma can therefore be applied.

For $d \geq 2$, we instead construct uniform densities on perturbations of a closed Euclidean ball $B$.  We first start with a constant function on $B$, and find $K$ pairs of disjoint caps in $B$.  For $\alpha = (\alpha_1,\ldots,\alpha_K)^T \in \{0,1\}^K$ and $k=1,\ldots,K$, if $\alpha_k=1$, the $\alpha$th perturbation function $f_\alpha$ is zero for the first element of the pair, and agrees with the constant function for the second; if $\alpha_k = 0$, the roles of the two elements of the pair are again reversed.  Since the resulting densities $\{f_\alpha:\alpha \in \{0,1\}^K\}$ are uniform on sets of the same volume, we can compute Hellinger distances between them and again apply Assouad's lemma.  

As can be seen from the above descriptions, the same lower bounds hold for the (smaller) class of upper semi-continuous densities on $\mathbb{R}^d$ that are concave on their support; indeed, for $d \geq 2$, the lower bounds hold even for the class of uniform densities on a closed, convex domain.  Since the domains in our construction are perturbations of a Euclidean ball, the problem is rather similar to that of estimating a convex body based on a sample of size $n$ with respect to the Nikodym distance, defined as the Lebesgue measure of the symmetric difference of two sets.  For this latter problem, the rate of $n^{-2/(d+1)}$ has also been obtained \citep{KorostelevTsybakov1993,MammenTsybakov1995,Brunel2014}.

An inspection of our proof further reveals that a minimax lower bound can also be obtained for the $L_2^2$ loss function.  Note that in this case, the loss function is not affine invariant, so it makes sense to restrict attention to log-concave densities $f$ with a lower bound on the determinant of the corresponding covariance matrix $\Sigma_f$.  The result obtained is that there exist $c_d' > 0$ such that for every $n \geq d+1$ and every $\rho > 0$, 
\[
\inf_{\tilde{f}_n \in \tilde{\mathcal{F}}_n} \sup_{f \in \mathcal{F}_d:\det(\Sigma_f) \geq \rho^2} \mathbb{E}_f L_2^2(\tilde{f}_n,f) \geq 
\left\{ \begin{array}{ll} c_1' n^{-4/5}/\rho & \mbox{if $d=1$} \\
c_d' n^{-2/(d+1)}/\rho & \mbox{if $d \geq 2$.} \end{array} \right.
\]

\section{Convergence and integrable envelopes}
\label{Sec:ConvInt}

We begin this section with some general results characterising the possible limits of sequences of log-concave densities on $\mathbb{R}^d$.  We will not require the full strength of these results in the rest of the paper (though we will apply Propositions~\ref{Prop:Conv} and~\ref{Prop:Conv3} when studying integrable envelopes in Section~\ref{SubSec:IntEnv} below), but we believe they will be of some independent interest.

\subsection{Convergence of log-concave densities}
\label{SubSec:Conv}

If $A$ is a $k$-dimensional affine subset of $\mathbb{R}^d$, we write $\mu_{k,A}$ for $k$-dimensional Lebesgue measure on $A$, and let $\mu_d := \mu_{d,\mathbb{R}^d}$ to agree with our previous notation.  We also write $\mathcal{F}_{k,A}$ for the class of upper semi-continuous, log-concave densities with respect to $\mu_{k,A}$ on $A$.  If $f:A \rightarrow [0,\infty)$ is a log-concave function, write $\mathrm{cl}(f)$ for its closure; thus $\mathrm{cl}(f)(x) := \limsup_{y \rightarrow x} f(y)$; if $f$ is also a density with respect to $\mu_{k,A}$ then $\mathrm{cl}(f) \in \mathcal{F}_{k,A}$.  If $\nu$ is a probability measure on $A$, we write $\mathrm{csupp}(\nu)$ for its \emph{convex support}; that is, $\mathrm{csupp}(\nu)$ is the smallest closed, convex subset of $A$ with $\nu$-measure 1.  If $C \subseteq \mathbb{R}^d$, let $C^c$, $\bar{C}$, $\mathrm{int}(C)$, $\mathrm{bd}(C)$, $\mathrm{conv}(C)$, $\mathrm{aff}(C)$ denote its complement, closure, interior, boundary, convex hull and affine hull respectively; if $C$ is convex, we write $\dim(C)$ for its dimension.  Let $B_d(x_0,\delta)$ and $\bar{B}_d(x_0,\delta)$ respectively denote the open and closed Euclidean balls of radius $\delta > 0$ centred at $x_0 \in \mathbb{R}^d$. 

Throughout this subsection, we let $f_1, f_2,\ldots$ be a sequence in $\mathcal{F}_d$, and let $\nu_n$ be the probability measure on $\mathbb{R}^d$ corresponding to $f_n$.  We suppose that $\nu_n \stackrel{d}{\rightarrow} \nu$, for some probability measure $\nu$, and let $C = \{x \in \mathbb{R}^d: \liminf f_n(x) > 0\}$.  Our first proposition deals with the most straightforward situation.
\begin{prop}
\label{Prop:Conv}
If either $\dim\bigl(\mathrm{csupp}(\nu)\bigr) = d$ or $\dim(C) = d$, then $\mathrm{csupp}(\nu) = \bar{C}$.  Moreover, under either condition, $\nu$ is absolutely continuous with respect to $\mu_d$, with Radon--Nikodym derivative $\mathrm{cl}(\liminf f_n) \in \mathcal{F}_d$.  
\end{prop}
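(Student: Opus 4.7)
The plan is to exploit the rigidity of log-concave functions to upgrade the weak convergence $\nu_n \to \nu$ into locally uniform convergence of $f_n$ on the interior of a limiting convex set, and then to identify the limiting density via the Portmanteau theorem.

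I would begin by observing that $g := \liminf_n f_n$ is itself log-concave. Writing $\phi_n := \log f_n$, each concave on $\mathbb{R}^d$ with values in $[-\infty,\infty)$, the elementary inequality $\liminf(a_n + b_n) \geq \liminf a_n + \liminf b_n$ shows $\liminf \phi_n$ is concave, so $g$ is log-concave. Consequently $C = \{g > 0\}$ is convex, and $\mathrm{cl}(g)$ is upper semi-continuous and log-concave. Under the hypothesis $\dim(C) = d$ the interior $\mathrm{int}(C)$ is non-empty; under the hypothesis $\dim(\mathrm{csupp}(\nu)) = d$ I plan to deduce $\dim(C) = d$ a posteriori from the inclusion $\mathrm{csupp}(\nu) \subseteq \bar{C}$ established below, so I may assume $\mathrm{int}(C) \neq \emptyset$ throughout.

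The key technical step is to show $f_n \to g$ locally uniformly on $\mathrm{int}(C)$. For any compact $K \subset \mathrm{int}(C)$ I would take a slight open convex thickening $U \supset K$ with $\bar{U} \subset \mathrm{int}(C)$, so that $\liminf f_n \geq \delta > 0$ on $U$. Combining a uniform upper bound on $(f_n|_U)$ (derived from log-concavity, the positive liminf lower bound, and tightness of $(\nu_n)$) with the classical fact that log-concave functions bounded above by $M$ and below by $\delta > 0$ on an open convex set are Lipschitz on compact subsets with constant controlled by $\log(M/\delta)$ and the distance to the boundary, I would obtain eventual equicontinuity of $(f_n|_K)$. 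Arzelà--Ascoli and a diagonal argument then yield, along any subsequence, a further subsequence converging locally uniformly on $\mathrm{int}(C)$ to some continuous log-concave function $f^*$; Portmanteau applied to $\nu$-continuity sets $A$ with $\bar{A} \subset \mathrm{int}(C)$ gives $\nu(A) = \int_A f^* \, d\mu_d$, so $f^*$ is uniquely determined as the density of $\nu|_{\mathrm{int}(C)}$. Hence the full sequence converges locally uniformly to this common $f^*$, and pointwise equality with $\liminf f_n$ forces $f^* = g$ on $\mathrm{int}(C)$.

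To conclude I must show $\nu(\bar{C}^c) = 0$, which, together with $\mu_d(\mathrm{bd}(C)) = 0$, forces $\nu$ to have a Radon--Nikodym derivative agreeing $\mu_d$-almost everywhere with $\mathrm{cl}(g)$. This I would obtain from the universal exponential tail bound for log-concave densities (e.g.\ Lemma~5.1 of \citet{CuleSamworth2010}), which together with the tightness of $(\nu_n)$ precludes both mass escape to infinity and accumulation on $\mathrm{bd}(C)$. The equality $\mathrm{csupp}(\nu) = \bar{C}$ then follows: strict positivity of $g$ on $\mathrm{int}(C)$ gives $\mathrm{csupp}(\nu) \supseteq \overline{\mathrm{int}(C)} = \bar{C}$, while $\nu(\bar{C}^c) = 0$ supplies the reverse inclusion. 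I expect the main obstacles to be (i) the uniform upper bound and equicontinuity step, which promotes weak convergence of measures into local uniform convergence of densities through log-concave rigidity, and (ii) the uniform tail control required to rule out mass escape to infinity and to the boundary of $C$.
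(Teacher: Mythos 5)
Your overall strategy (locally uniform convergence of $f_n$ on $\mathrm{int}(C)$, identification of the limit via Portmanteau, then a mass-accounting step) is reasonable, but the mass-accounting step contains a genuine gap, and it is exactly the point at which the paper has to work hardest. You need $\nu(\bar{C}^c)=0$, and $\bar{C}^c$ in general contains bounded open sets at finite distance from the origin. A universal exponential envelope controls only the behaviour as $\|x\|\rightarrow\infty$, and tightness only rules out escape of mass to infinity; neither says anything about a fixed bounded open set $O$ disjoint from $\bar{C}$. Nor does the pointwise information $\liminf_n f_n = 0$ on $O$ help directly: Portmanteau gives $\nu(O) \leq \liminf_n \nu_n(O)$, and Fatou's lemma runs in the wrong direction, so you cannot deduce $\liminf_n \int_O f_n \, d\mu_d = 0$ from pointwise liminfs (the minimising subsequence depends on the point). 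The paper supplies precisely the missing ingredient: for $x^* \notin \bar{C}$ it passes to a subsequence with $f_{n_k}(x^*) < 1/k$, uses convexity of the superlevel set $\{f_{n_k} \geq 1/k\}$ to separate it from $x^*$ by a hyperplane, extracts a convergent subsequence of unit normals, and thereby shows that $f_{n_{k(l)}}$ is uniformly small on an entire truncated open halfspace beyond $x^*$; integrating and applying the Portmanteau inequality for open sets gives $\nu$ of that halfspace equal to zero, whence $x_0 \notin \mathrm{csupp}(\nu)$. Some argument of this kind, exploiting convexity of the level sets rather than tail decay, is indispensable for your claim that $\nu(\bar{C}^c)=0$.

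A second, related problem is a circularity in how you treat the hypothesis $\dim\bigl(\mathrm{csupp}(\nu)\bigr)=d$: you propose to deduce $\dim(C)=d$ from the inclusion $\mathrm{csupp}(\nu) \subseteq \bar{C}$ ``established below'', but your proof of that inclusion is the insufficient tail-bound argument, while the rest of your machinery (locally uniform convergence on $\mathrm{int}(C)$, strict positivity of $g$ there, the identification of $\nu$ on $\mathrm{int}(C)$) already presupposes $\mathrm{int}(C) \neq \emptyset$. In the paper, the halfspace argument proves $\mathrm{csupp}(\nu) \subseteq \bar{C}$ directly under $\dim\bigl(\mathrm{csupp}(\nu)\bigr)=d$ with no prior knowledge of $C$; only then does one conclude $\dim(C)=d$ and run a separate simplex argument for the reverse inclusion. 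For absolute continuity the paper also bypasses your accounting entirely: it first proves a uniform bound $\sup_n \sup_x f_n(x) \leq e^M$ by a level-set volume argument (needing only a small region in $\mathrm{int}(C)$ on which $f_n \geq \epsilon$ eventually), and then uses regularity of $\mu_d$ to show that $\mu_d(A)$ small implies $\nu(A)$ small, before identifying the derivative as $\mathrm{cl}(\liminf f_n)$. Note also that $\nu(\mathrm{bd}(C))=0$ does not follow from $\mu_d(\mathrm{bd}(C))=0$ until absolute continuity is established, so that piece of your accounting likewise requires the uniform sup bound (or an equivalent regularity argument) rather than being automatic; and your eventual uniform lower bound $f_n \geq \delta$ on $U$ needs a finitely-many-points-plus-concavity argument, since a pointwise positive liminf alone is not uniform in $n$.
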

The second part of Proposition~\ref{Prop:Conv} weakens the hypothesis of Proposition~2(a) of \citet{CuleSamworth2010}, where the limiting measure was assumed a priori to be absolutely continuous with respect to Lebesgue measure on $\mathbb{R}^d$.  The correspondence between $\mathrm{csupp}(\nu)$ and $\bar{C}$ in the first part leads one to hope that a similar relationship might hold in more general scenarios where the dimensions of $\mathrm{csupp}(\nu)$ and $C$ are smaller than $d$ (so the limiting measure is degenerate).  The following examples, however, dispel such optimism.
\begin{enumerate}[(i)]
\item It is not in general the case that $\mathrm{csupp}(\nu) \subseteq \mathrm{aff}(C)$.  For instance, if $f_n$ denotes the (log-concave) density of a random variable with a $N(1/n,1/n^4)$ distribution, then $C = \emptyset$ but $\mathrm{csupp}(\nu) = \{0\}$.  
\item Even if $\mathrm{csupp}(\nu) \subseteq \mathrm{aff}(C)$, we do not necessarily have $\mathrm{csupp}(\nu) \subseteq \bar{C}$.  For instance, if $f_n$ denotes the density of a bivariate normal random vector with mean 0 and covariance matrix $\begin{pmatrix} 1 & \rho_n \sigma_n \\ \rho_n \sigma_n & \sigma_n^2\end{pmatrix}$, with $\sigma_n = 1/n$ and $\rho_n = \sqrt{1 - 1/\log n}$, then a straightforward calculation shows that $C = [-\sqrt{2},\sqrt{2}] \times \{0\}$, while $\mathrm{csupp}(\nu) = \mathbb{R} \times \{0\}$.
\item It is also not in general the case that $C \subseteq \mathrm{aff}\bigl(\mathrm{csupp}(\nu))$.  For instance, if $f_n$ denotes the density of a bivariate normal random vector with mean 0 and covariance matrix $\begin{pmatrix} 1/n & 0 \\ 0 & e^{-n^2} \end{pmatrix}$, then $C = \mathbb{R} \times \{0\}$, while $\mathrm{csupp}(\nu) = \{0\} \times \{0\}$.  
\item Even if $C \subseteq \mathrm{aff}\bigl(\mathrm{csupp}(\nu)\bigr)$, we do not necessarily have $\bar{C} \subseteq \mathrm{csupp}(\nu)$.  For instance, let $f_n$ denote the density of the bivariate random vector $\begin{pmatrix} X_n \\ Y_n \end{pmatrix}$, where $X_n$ and $Y_n$ are independent, where $X_n$ has density
\[
f_{n,X_n}(x) := \frac{1}{2(1+1/n)}\mathbbm{1}_{\{x \in [-1,1]\}} + \frac{1}{2(1+1/n)}e^{-n|x-1|} \mathbbm{1}_{\{|x| > 1\}},
\]
and $Y_n \sim N(0,e^{-n^2})$.  Then $\begin{pmatrix} X_n \\ Y_n \end{pmatrix} \stackrel{d}{\rightarrow} U[-1,1] \otimes \delta_0$, so $\mathrm{csupp}(\nu) = [-1,1] \times \{0\}$.  But $C = \mathbb{R} \times \{0\}$.
\end{enumerate}
Despite these chastening examples, we can still make the following statements with regard to the situation where $\nu$ is degenerate.
\begin{prop}
\label{Prop:Conv2}
\begin{enumerate}
\item If $\dim(C) = d-1$ and $S$ is a compact set not intersecting $\mathrm{aff}(C)$, then $\sup_{x \in S} f_n(x) \rightarrow 0$; in particular, $\mathrm{csupp}(\nu) \subseteq \mathrm{aff}(C)$.
\item Let $U$ denote the unique subspace of $\mathbb{R}^d$ such that $\mathrm{aff}\bigl(\mathrm{csupp}(\nu)\bigr) = U + a$, for some $a \in \mathbb{R}^d$.  Let $k = \dim(U)$, and let $U^\perp$ denote the orthogonal complement of $U$.  For $u \in U$, let $f_{n,U}(u+a) = \mathrm{cl}\bigl(\int_{U^\perp} f_n(u+a+w) \, dw\bigr)$.  Then $\nu$ is absolutely continuous with respect to $\mu_{k,U+a}$, with Radon--Nikodym derivative $\mathrm{cl}(\liminf f_{n,U}) \in \mathcal{F}_{k,U+a}$.
\end{enumerate}
\end{prop}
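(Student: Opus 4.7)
For Part~1, I would begin by picking $d$ affinely independent points $x_1, \ldots, x_d \in C$; for all sufficiently large $n$, $f_n(x_j) \geq c > 0$, and by log-concavity $f_n \geq c$ on the $(d-1)$-simplex $\Delta := \mathrm{conv}(x_1, \ldots, x_d) \subset \mathrm{aff}(C)$. A preliminary step is to show $\|f_n\|_\infty \to \infty$: the contrapositive of Proposition~\ref{Prop:Conv} forces $\dim(\mathrm{csupp}(\nu)) \leq d - 1$, so $\nu$ is singular with respect to $\mu_d$; on the other hand, a uniformly bounded sup-norm would, by the standard compactness of log-concave densities under weak convergence, produce an absolutely continuous weak subsequential limit. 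Tightness of $\{\nu_n\}$ then ensures that the modes $z_n^* \in \argmax f_n$ remain bounded, and we may extract a subsequence $(n_k)$ with $z_{n_k}^* \to z_\infty^*$.

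Now suppose for contradiction that $\sup_{x \in S} f_n(x) \not\to 0$: along a further subsequence $f_{n_k}(y_k) \geq \epsilon > 0$ with $y_k \to y_0 \in S$, so $\delta := d(y_0, \mathrm{aff}(C)) > 0$. Consider $\tilde{T}_\infty := \mathrm{conv}(z_\infty^*, y_0, x_1, \ldots, x_d)$, which contains $\mathrm{conv}(y_0, x_1, \ldots, x_d)$ and therefore has positive $\mu_d$-volume. For any $w \in \tilde{T}_\infty^\circ$, there is a representation $w = \lambda_0 z_\infty^* + \lambda_1 y_0 + \sum_{j=1}^d \lambda_{j+1} x_j$ with all $\lambda_i > 0$; by continuity, the same $w$ admits analogous representations inside $\tilde{T}_{n_k} := \mathrm{conv}(z_{n_k}^*, y_k, x_1, \ldots, x_d)$ with positive coordinates $\lambda_i^{(k)}$ satisfying $\lambda_0^{(k)} \geq \lambda_0/2$ for $k$ large. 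Log-concavity then yields
\[
f_{n_k}(w) \geq \|f_{n_k}\|_\infty^{\lambda_0^{(k)}} \epsilon^{\lambda_1^{(k)}} c^{\sum_{j \geq 2} \lambda_j^{(k)}} \to \infty,
\]
and Fatou's lemma on $\tilde{T}_\infty^\circ$ gives $\infty \leq \liminf_k \int_{\tilde{T}_\infty^\circ} f_{n_k} \, d\mu_d \leq 1$, a contradiction. The ``in particular'' clause follows: $\sup_K f_n \to 0$ for every compact $K \subset \mathrm{aff}(C)^c$ implies $\nu_n(K) \to 0$, and exhausting $\mathrm{aff}(C)^c$ by such compacts (with $\nu$-null boundaries) yields $\nu(\mathrm{aff}(C)^c) = 0$, so $\mathrm{csupp}(\nu) \subseteq \mathrm{aff}(C)$.

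For Part~2, the plan is to apply Proposition~\ref{Prop:Conv} in the $k$-dimensional ambient space $U + a$. Each marginal $f_{n, U}$ is log-concave on $U + a$ by the Prékopa--Leindler inequality, so $f_{n,U} \in \mathcal{F}_{k, U+a}$. Letting $\pi : \mathbb{R}^d \to U + a$ denote orthogonal projection, the pushforward $\pi_* \nu_n$ has density $f_{n, U}$, and the continuous mapping theorem gives $\pi_* \nu_n \to \pi_* \nu$ weakly. Since $\mathrm{csupp}(\nu) \subseteq U + a$, $\pi$ agrees with the identity $\nu$-a.s., so $\pi_* \nu = \nu$. Viewed as a measure on the $k$-dimensional ambient space $U + a$, $\nu$ satisfies $\dim(\mathrm{csupp}(\nu)) = k$ by the defining property of $U$, so the first hypothesis of Proposition~\ref{Prop:Conv} is in force in dimension $k$, delivering the Radon--Nikodym derivative $\mathrm{cl}(\liminf f_{n, U}) \in \mathcal{F}_{k, U+a}$ as claimed.

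The hardest part is the Fatou-based contradiction in Part~1: securing a positive lower bound on $\lambda_0^{(k)}$ uniformly as $w$ ranges over a positive-measure set, despite the potential affine dependence of the $d+2$ points $z_{n_k}^*, y_k, x_1, \ldots, x_d$ in $\mathbb{R}^d$. This is resolved by working strictly in the interior of $\tilde{T}_\infty$, where multiple barycentric representations are available and continuity allows perturbation as $k \to \infty$.
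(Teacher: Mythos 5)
Your Part~2 is essentially the paper's own argument: Pr\'ekopa's theorem for log-concavity of the marginal $f_{n,U}$, weak convergence of the projected measures (the paper cites the Cram\'er--Wold device where you use the continuous mapping theorem, with $\pi_*\nu=\nu$ because $\mathrm{csupp}(\nu)\subseteq U+a$), and then Proposition~\ref{Prop:Conv} applied inside the $k$-dimensional ambient space $U+a$; nothing to add there. For Part~1, however, you take a genuinely different and much heavier route. The paper never looks at $\|f_n\|_\infty$ or at modes: having fixed affinely independent $x_1^*,\ldots,x_d^*\in C$ and a limit point $y_0\in S$ of the offending sequence, it uses that $f_{n_k}\geq\epsilon$ on the nondegenerate $d$-simplex $\mathrm{conv}(y_k,x_1^*,\ldots,x_d^*)$, that $\mathrm{csupp}(\nu)$ lies in a hyperplane (exactly your observation that $\dim(\mathrm{csupp}(\nu))\leq d-1$), and that a fixed closed subset of the limiting simplex with volume bounded away from zero can be taken strictly on one side of that hyperplane; the Portmanteau inequality for closed sets then gives $\nu$ positive mass off $\mathrm{csupp}(\nu)$, a contradiction. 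Note that once you have identified the $d$-simplex $\mathrm{conv}(y_0,x_1,\ldots,x_d)$ with the $\epsilon$-lower bound, this short argument is already available to you, so the sup-norm/mode/Fatou machinery is not needed.

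Within your route there is one step that, as written, is a genuine gap: the assertion that ``tightness of $\{\nu_n\}$ ensures that the modes $z_n^*$ remain bounded.'' Tightness alone does not control the location of density maxima (arbitrarily thin, far-away spikes of negligible mass are compatible with tightness for general densities); the statement is true here only because of log-concavity, and it needs a proof. One way: if $z_n^*$ is a mode then $f_n\bigl(\tfrac12 p+\tfrac12 z_n^*\bigr)\geq f_n(p)^{1/2}f_n(z_n^*)^{1/2}\geq f_n(p)$ for every $p$, so by the substitution $q=\tfrac12 p+\tfrac12 z_n^*$,
\[
\nu_n\Bigl(\tfrac12\bar{B}_d(0,A)+\tfrac12 z_n^*\Bigr)\;\geq\;2^{-d}\int_{\bar{B}_d(0,A)}f_n(p)\,dp\;=\;2^{-d}\,\nu_n\bigl(\bar{B}_d(0,A)\bigr),
\]
which contradicts tightness if $\|z_n^*\|\rightarrow\infty$ (choose $A$ so that $\nu_n(\bar{B}_d(0,A))\geq 1-\delta$ for all $n$ with $\delta<2^{-d}(1-\delta)$, and note the translated ball is eventually disjoint from $\bar{B}_d(0,A)$). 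Your other flagged concern is less serious: for Fatou only pointwise divergence on $\tilde T_\infty^\circ$ is needed, so no uniformity in $w$ is required, and for fixed $w$ one can write $w=\tfrac{\lambda_0}{2}z_{n_k}^*+\bigl(1-\tfrac{\lambda_0}{2}\bigr)u_k$ and check that $u_k$ converges to a point of $\mathrm{int}(\tilde T_\infty)$, hence lies in $\tilde T_{n_k}$ for large $k$, which gives the required weight $\lambda_0/2$ on $z_{n_k}^*$. With these repairs your argument goes through, but the paper's direct Portmanteau contradiction is both shorter and avoids the unproved mode-boundedness claim entirely.
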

Finally in this subsection, we show that even in the situation where $\nu$ is degenerate, the convergence in distribution of log-concave measures implies much stronger forms of convergence.  Similar results were proved in Theorem~2.1 and Proposition~2.2 of \citet{SHD2011} under the stronger assumption that $\nu$ has a log-concave Radon--Nikodym derivative with respect to $\mu_d$.
\begin{prop}
\label{Prop:Conv3}
Let $\Theta = \bigl\{\theta \in \mathbb{R}^d: \int_{\mathbb{R}^d} e^{\theta^T x} \, d\nu(x) < \infty\bigr\}$.  Then, with $U^\perp$ defined as in Proposition~\ref{Prop:Conv2}, we have $\Theta = \Theta_0 \oplus U^\perp$, where $\Theta_0$ is relatively open in $U$, convex, and contains 0.  Moreover, for every $\theta \in \Theta$, we have
\[
\int_{\mathbb{R}^d} e^{\theta^T x} \, d\nu_n(x) \rightarrow \int_{\mathbb{R}^d} e^{\theta^T x} \, d\nu(x) 
\]
as $n \rightarrow \infty$. 
\end{prop}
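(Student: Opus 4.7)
The plan is to address the two claims separately. For the structural identity $\Theta = \Theta_0 \oplus U^\perp$, decompose any $\theta \in \mathbb{R}^d$ as $\theta = \theta_U + \theta_{U^\perp}$ with $\theta_U \in U$ and $\theta_{U^\perp} \in U^\perp$, and take $a \in U^\perp$ without loss of generality. Since $\mathrm{csupp}(\nu) \subseteq U + a$, the map $x \mapsto \theta_{U^\perp}^T x$ equals the constant $\theta_{U^\perp}^T a$ for $\nu$-almost every $x$, so
\[
\int_{\mathbb{R}^d} e^{\theta^T x}\, d\nu(x) = e^{\theta_{U^\perp}^T a} \int_{\mathbb{R}^d} e^{\theta_U^T x}\, d\nu(x),
\]
and $\theta \in \Theta$ if and only if $\theta_U \in \Theta_0 := \{\theta' \in U : \int e^{\theta'^T x}\, d\nu(x) < \infty\}$. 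Convexity of $\Theta_0$ follows from H\"older's inequality (equivalently, log-convexity of $\theta' \mapsto \int e^{\theta'^T x}\, d\nu$), and $0 \in \Theta_0$ is immediate. To prove relative openness of $\Theta_0$ in $U$, I invoke Proposition~\ref{Prop:Conv2}(2), which provides a log-concave Radon--Nikodym derivative $g$ for $\nu$ with respect to $\mu_{k,U+a}$; after identifying $U + a$ with $\mathbb{R}^k$, the tilted function $u \mapsto e^{\theta_0^T u} g(u)$ at any $\theta_0 \in \Theta_0$ is log-concave and integrable, hence has exponential tails (e.g.\ Lemma~1 of \citet{CuleSamworth2010}), from which integrability extends to a $U$-neighborhood of $\theta_0$.

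For the convergence claim, the strategy is Skorokhod representation combined with uniform integrability. Upgrading $\nu_n \to \nu$ via Skorokhod gives $X_n \to X$ almost surely on a common probability space, with $X_n \sim \nu_n$ and $X \sim \nu$, and so $e^{\theta^T X_n} \to e^{\theta^T X}$ almost surely. By Vitali's convergence theorem it suffices to show $\{e^{\theta^T X_n}\}_{n \geq 1}$ is uniformly integrable. Since $\Theta = \pi_U^{-1}(\Theta_0)$, where $\pi_U$ denotes the orthogonal projection onto $U$, openness of $\Theta_0$ in $U$ implies openness of $\Theta$ in $\mathbb{R}^d$; hence for any $\theta \in \Theta$ there exists $p > 1$ with $p\theta \in \Theta$, and uniform integrability reduces to the moment bound
\[
\sup_{n \geq 1} \int_{\mathbb{R}^d} e^{p\theta^T x}\, d\nu_n(x) < \infty.
\]

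Establishing this uniform moment generating function bound for an arbitrary $p\theta \in \Theta$ is the main obstacle. Fatou's lemma yields only the one-sided inequality $M(p\theta) \leq \liminf_n M_n(p\theta)$, so the bound must be extracted directly from log-concavity. My plan is to first derive a baseline uniform exponential tail estimate: tightness of $\{\nu_n\}$ combined with log-concavity should produce constants $C, \beta > 0$ independent of $n$ with $f_n(x) \leq C e^{-\beta |x|}$ (a uniform counterpart of Lemma~1 of \citet{CuleSamworth2010}), which immediately gives $\sup_n M_n(\theta^*) < \infty$ for all $\theta^*$ with $|\theta^*| < \beta$. For $p\theta$ outside this ball, one must propagate the bound using log-convexity of $\theta \mapsto \log M_n(\theta)$ and the already-established convergence on the baseline ball, together with classical convex-analytic results on local uniform convergence of sequences of convex functions. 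Handling this propagation, along with the degeneracy of $\nu$ in directions orthogonal to $U$ (where one exploits the concentration of $\nu_n^{U^\perp}$ to $\delta_a$ to control moments in those directions), constitutes the technical heart of the proof.
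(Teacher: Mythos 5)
The first half of your argument --- the decomposition $\Theta = \Theta_0 \oplus U^\perp$, convexity via H\"older, $0 \in \Theta_0$, and relative openness via the log-concave Radon--Nikodym derivative from Proposition~\ref{Prop:Conv2}(2) combined with the exponential envelope for integrable log-concave functions --- is sound and essentially matches the paper. The convergence half, however, has a genuine gap: you make everything rest on the uniform moment bound $\sup_n \int_{\mathbb{R}^d} e^{p\theta^T x}\,d\nu_n(x) < \infty$ for some $p>1$, and you do not prove it; you only sketch a plan whose first step is false and whose second step cannot work as stated. The proposed baseline envelope $f_n(x) \le C e^{-\beta\|x\|}$ with $C,\beta$ independent of $n$ fails precisely in the degenerate situations the proposition is designed to cover: if, say, $\nu_n = N\bigl(0,\mathrm{diag}(1,1/n)\bigr)$, then $\sup_x f_n(x) \to \infty$, so no uniform density envelope exists (tightness plus log-concavity does give uniform exponential decay of the \emph{measures}, but not of the densities, and in any case only yields MGF control on a small ball around $0$). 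The ``propagation'' step then breaks down: log-convexity of $\theta \mapsto \log M_n(\theta)$ lets you interpolate between points at which $M_n$ is already controlled, but it cannot extrapolate outward from a small ball around the origin to $p\theta$; a uniform bound at $p\theta$ is essentially an instance of the very statement being proved, so as sketched the argument is circular at the key point.

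For comparison, the paper resolves exactly this difficulty by reducing to one dimension via Cram\'er--Wold: $\theta^T X_n$ has a log-concave density $f_{n,\theta}$ by Pr\'ekopa's theorem, and one splits according to whether the law $\nu_\theta$ of $\theta^T X$ is non-degenerate or a point mass. In the non-degenerate case the exponentially weighted convergence $\int e^t|f_{n,\theta}(t)-f_\theta(t)|\,dt \to 0$ is supplied by Theorem~2.1 and Proposition~2.2 of \citet{SHD2011}. In the degenerate case ($\theta \in U^\perp$, so $\theta^T X_n \stackrel{d}{\rightarrow} \delta_a$), the concentration of mass forces $f_{n,\theta}$ to take a value at least $1/(4\epsilon)$ near $a$ and a value at most $1/(8\epsilon)$ slightly to the right, and log-concavity then gives a tail bound with exponential rate of order $\epsilon^{-1}$, i.e.\ arbitrarily fast decay; this yields uniform integrability of $e^{\theta^T X_n}$ directly, with no need for a $p>1$ moment, after which convergence in distribution finishes the proof. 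If you wish to keep your Vitali/uniform-integrability framework, you must supply estimates of this kind for both the degenerate and non-degenerate directions; without them the technical heart of the proposition is missing.
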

We note for later use that as an immediate corollary of Proposition~\ref{Prop:Conv3}, if $\Sigma_n$ denotes the covariance matrix corresponding to $\nu_n$, and $\Sigma$ denotes the covariance matrix corresponding to $\nu$, then $\Sigma_n \rightarrow \Sigma$. 

\subsection{Integrable envelopes for classes of log-concave densities}
\label{SubSec:IntEnv}

Part~(a) of the following result is important for establishing our bracketing entropy bounds in Section~\ref{Sec:Bracketing}.  Part~(b) is used in Lemma~\ref{Lemma:ThreeTerms} to obtain a lower bound for the smallest eigenvalue of the covariance matrix corresponding to the log-concave projection of a distribution whose own covariance matrix is close to the identity.  For $f \in \mathcal{F}_d$, let $\mu_f := \int_{\mathbb{R}^d} xf(x) \, dx$ and $\Sigma_f := \int_{\mathbb{R}^d} (x-\mu_f)(x-\mu_f)^Tf(x) \, dx$.  For $\mu \in \mathbb{R}^d$ and a symmetric, positive-definite, $d \times d$ matrix $\Sigma$, let
\[
\mathcal{F}_d^{\mu,\Sigma} := \bigl\{f \in \mathcal{F}_d: \mu_{f} = \mu, \Sigma_{f} = \Sigma\bigr\}.
\]
\begin{thm}
\label{Thm:IntEnv}
\begin{enumerate}[(a)] 
\item For each $d \in \mathbb{N}$, there exist $A_{0,d},B_{0,d} > 0$ such that for all $x \in \mathbb{R}^d$, we have
\[
\sup_{f \in \mathcal{F}_d^{0,I}} f(x) \leq e^{-A_{0,d}\|x\|+B_{0,d}}.
\]
\item For $\|x\| \leq 1/4$, we have
\[
\inf_{f \in \mathcal{F}_d^{0,I}} f(x) > 0.
\]
\end{enumerate}
\end{thm}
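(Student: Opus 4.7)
The plan is to prove part (b) first via a compactness argument built on the convergence results of Section~\ref{SubSec:Conv}, and then to deduce part (a) by a log-concavity extrapolation using part (b).

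\textbf{Part (b).} Fix $x^* \in \bar{B}_d(0, 1/4)$ and suppose for contradiction that there is a sequence $(f_n) \subset \mathcal{F}_d^{0,I}$ with $f_n(x^*) \to 0$. Chebyshev's inequality and the identity-covariance constraint render $\{\nu_n\}$ tight, so along a subsequence $\nu_{n_k} \stackrel{d}{\rightarrow} \nu$. The corollary to Proposition~\ref{Prop:Conv3} gives $\mu_\nu = 0$ and $\Sigma_\nu = I$, so in particular $\dim(\mathrm{csupp}(\nu)) = d$, and Proposition~\ref{Prop:Conv} yields a log-concave density $g := \mathrm{cl}(\liminf f_{n_k}) \in \mathcal{F}_d^{0,I}$ with $g(x^*) \leq \liminf f_{n_k}(x^*) = 0$. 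The crucial step is to show that $\bar{B}_d(0, 1/4) \subseteq \mathrm{int}(\mathrm{supp}(g))$ for every $g \in \mathcal{F}_d^{0,I}$; since log-concave densities are strictly positive on the interior of their support, this yields the contradiction. If this inclusion failed at some $x'$, a supporting hyperplane argument would produce a unit vector $v$ with $\sup_{x \in \mathrm{supp}(g)} v^T x \leq v^T x' \leq 1/4$. Then $Y := v^T X$, for $X \sim g$, is centered log-concave with $\mathrm{Var}(Y) = 1$ and $Y \leq 1/4$ almost surely, so $Z := 1/4 - Y \geq 0$ is log-concave with $\mathbb{E} Z = 1/4$ and $\mathrm{Var}(Z) = 1$. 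Berwald's inequality $\mathbb{E} Z^2 \leq 2(\mathbb{E} Z)^2$ for non-negative log-concave random variables then gives $\mathbb{E} Z^2 \leq 1/8$, contradicting $\mathbb{E} Z^2 = 17/16$.

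\textbf{Part (a).} Two ingredients are needed: a uniform bound $\|f\|_\infty \leq M_d$ over $\mathcal{F}_d^{0,I}$ (a classical isotropic log-concave estimate, alternatively obtained by the same compactness technique as above since log-concave densities are bounded), and the uniform strengthening of (b) that $\inf_{f \in \mathcal{F}_d^{0,I},\ \|z\| \leq 1/4} f(z) \geq \epsilon_d > 0$ (which follows from the pointwise statement together with uniform convergence of log-concave densities on compact subsets of the interior of the limit's support). For $x$ with $R := \|x\|$ large, consider the tube
\[
T_x := \bigl\{s e_x + w : s \in [0, R],\ w \perp e_x,\ \|w\| \leq (R-s)/(4R)\bigr\},
\]
where $e_x := x/R$. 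Each $y = s e_x + w \in T_x$ admits the representation $y = (1 - s/R) z + (s/R) x$ with $z := w R/(R-s) \in \bar{B}_d(0, 1/4)$, so log-concavity gives $f(y) \geq \epsilon_d^{1 - s/R} f(x)^{s/R}$. Since the map $(s, w) \mapsto y$ is a bijection onto $T_x$, the constraint $\int_{T_x} f \, d\mu_d \leq 1$ together with an elementary integration (the volume of $T_x$ being of order $R$, and the integral reducing to a Laplace-type integral in $-\log f(x) - \log \epsilon_d$) forces $-\log f(x) \geq A_{0,d} R - B_{0,d}'$ for $\|x\|$ large. The remaining case of bounded $\|x\|$ is handled via $\|f\|_\infty \leq M_d$, yielding the desired $f(x) \leq e^{-A_{0,d}\|x\| + B_{0,d}}$ globally.

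\textbf{Main obstacle.} The key subtlety is establishing $\bar{B}_d(0, 1/4) \subseteq \mathrm{int}(\mathrm{supp}(g))$ in part (b); Berwald's moment comparison provides a fairly tight numerical bound that just suffices at the chosen radius $1/4$. For part (a), the tube construction is geometrically natural, but the bijectivity of the parametrization and the asymptotic behavior of the Laplace-type integral arising in the extrapolation must both be checked in order to extract the correct exponential rate.
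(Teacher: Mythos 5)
Your proposal is essentially correct but takes a genuinely different route from the paper in both parts. For part (b), the paper also passes to a weak limit $g \in \mathcal{F}_d^{0,I}$ along a subsequence and reduces matters to showing that no point of $\bar{B}_d(0,1/4)$ can lie outside $\mathrm{int}(\mathrm{dom}(\log g))$; but where you invoke a supporting hyperplane together with the Berwald--Borell moment comparison $\mathbb{E}(Z^2)\leq 2(\mathbb{E}Z)^2$ for non-negative log-concave $Z$, the paper takes the boundary point of $\mathrm{dom}(\log g)$ nearest the origin and runs an explicit one-dimensional computation on the marginal in the supporting direction (bounding $f_1^*(-2a)$ and the second moment by hand) to force $a > 1/4$. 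Your argument is shorter and in fact shows that the support of any $g \in \mathcal{F}_d^{0,I}$ contains every ball $B_d(0,\rho)$ with $\rho < 1$ (sharp, as the reversed exponential shows), at the cost of importing a classical inequality that the paper in effect reproves from scratch. For part (a), the paper argues by contradiction, again via compactness and locally uniform convergence, with chord estimates along rays from a fixed interior point; your cone/tube integration from $\bar{B}_d(0,1/4)$ to $x$ is direct and constructive, and the Laplace-type estimate does deliver $-\log f(x) \gtrsim \epsilon_d \|x\|$ for large $\|x\|$ once you have the two uniform ingredients you list (a uniform bound on $\|f\|_\infty$ over $\mathcal{F}_d^{0,I}$, which is classical for isotropic log-concave densities or obtainable by your compactness technique, and uniform positivity on the ball).

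Two steps need tightening. In (b), the inequality $g(x^*) \leq \liminf_k f_{n_k}(x^*)$ is not valid as written: $g = \mathrm{cl}(\liminf_k f_{n_k})$ is the upper semi-continuous closure, which can strictly exceed the liminf at boundary points of its domain. The correct deduction (as in the paper) is that $x^*$ cannot lie in $\mathrm{int}(\mathrm{dom}(\log g))$, because $f_{n_k} \to g$ pointwise (indeed locally uniformly) there; your hyperplane/Berwald step then rules this out. Similarly, in (a) the uniform bound $\inf_{f \in \mathcal{F}_d^{0,I}} \inf_{\|z\| \leq 1/4} f(z) > 0$ does not follow from the pointwise statement of (b) on its own (the pointwise infimum over $f$ need not be lower semi-continuous in $z$); instead, rerun your compactness argument with a convergent sequence of points $z_n$ in the ball, using that your Berwald step places the whole ball well inside $\mathrm{int}(\mathrm{dom}(\log g))$ for the limit $g$, together with locally uniform convergence there. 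With these repairs, and precise citations for the Berwald--Borell inequality and the uniform sup bound, the proof goes through.
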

In fact, it will be convenient to have the corresponding envelopes for slightly larger classes.  We write $\lambda_{\mathrm{min}}(\Sigma)$ and $\lambda_{\mathrm{max}}(\Sigma)$ for the smallest and largest eigenvalues respectively of a positive-definite, symmetric $d \times d$ matrix $\Sigma$.  For $\xi \geq 0$ and $\eta \in (0,1)$, let
\[
\tilde{\mathcal{F}}_d^{\xi,\eta} := \{\tilde{f} \in \mathcal{F}_d: \|\mu_{\tilde{f}}\| \leq \xi \ \text{and} \ 1-\eta \leq \lambda_{\mathrm{min}}(\Sigma_{\tilde{f}}) \leq \lambda_{\mathrm{max}}(\Sigma_{\tilde{f}}) \leq 1 + \eta\}.
\]
\begin{corollary}
\label{Cor:Transform}
\begin{enumerate}[(a)] 
\item For each $d \in \mathbb{N}$, there exist $A_{0,d}, B_{0,d} > 0$ such that for every $\xi \geq 0$, every $\eta \in (0,1)$ and every $x \in \mathbb{R}^d$, we have
\[
\sup_{\tilde{f} \in \tilde{\mathcal{F}}_d^{\xi,\eta}} \tilde{f}(x) \leq (1-\eta)^{-d/2} \exp\biggl\{-\frac{A_{0,d}\|x\|}{(1+\eta)^{1/2}} + \frac{A_{0,d}\xi}{(1+\eta)^{1/2}} + B_{0,d}\biggr\}.
\]
\item For every $\xi \geq 0$ and $\eta \in (0,1)$ satisfying $\xi < (1-\eta)^{1/2}/4$ and for every $\|x\| \leq (1-\eta)^{1/2}/4 - \xi$,  we have
\[
\inf_{\tilde{f} \in \tilde{\mathcal{F}}_d^{\xi,\eta}} \tilde{f}(x) > 0.
\]
\end{enumerate} 
\end{corollary}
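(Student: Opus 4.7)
The natural approach is affine standardization, which reduces both statements to Theorem~\ref{Thm:IntEnv}. Given $\tilde f \in \tilde{\mathcal{F}}_d^{\xi,\eta}$, I would define
\[
f(y) := (\det \Sigma_{\tilde f})^{1/2}\,\tilde f\bigl(\Sigma_{\tilde f}^{1/2} y + \mu_{\tilde f}\bigr).
\]
Since affine transformations preserve upper semi-continuity and log-concavity and the change of variables is designed to recentre and sphere the distribution, $f$ lies in $\mathcal{F}_d^{0,I}$; inverting the substitution yields
\[
\tilde f(x) = (\det \Sigma_{\tilde f})^{-1/2}\,f\bigl(\Sigma_{\tilde f}^{-1/2}(x - \mu_{\tilde f})\bigr).
\]
All remaining work is to control the two factors on the right through the eigenvalue bounds $1-\eta \leq \lambda_{\min}(\Sigma_{\tilde f}) \leq \lambda_{\max}(\Sigma_{\tilde f}) \leq 1+\eta$ and the mean bound $\|\mu_{\tilde f}\| \leq \xi$.

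For part~(a), setting $y := \Sigma_{\tilde f}^{-1/2}(x-\mu_{\tilde f})$, Theorem~\ref{Thm:IntEnv}(a) gives $f(y) \leq \exp(-A_{0,d}\|y\|+B_{0,d})$, while $(\det\Sigma_{\tilde f})^{-1/2} \leq (1-\eta)^{-d/2}$. For the norm I would use the operator-norm inequality $\|y\| \geq \lambda_{\max}(\Sigma_{\tilde f})^{-1/2}\|x-\mu_{\tilde f}\| \geq (1+\eta)^{-1/2}(\|x\|-\xi)$, via the triangle inequality (and this lower estimate remains valid, though vacuous, when $\|x\| < \xi$). Substituting both bounds into the display above yields exactly the stated envelope.

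For part~(b), the same representation plus $(\det\Sigma_{\tilde f})^{-1/2} \geq (1+\eta)^{-d/2} > 0$ reduces the problem to bounding $f(y)$ below. Here the eigenvalue estimate goes the other way:
\[
\|y\| \leq \lambda_{\min}(\Sigma_{\tilde f})^{-1/2}\|x-\mu_{\tilde f}\| \leq (1-\eta)^{-1/2}(\|x\|+\xi) \leq \tfrac{1}{4},
\]
the last inequality holding precisely because of the hypothesis $\|x\| \leq (1-\eta)^{1/2}/4 - \xi$, so Theorem~\ref{Thm:IntEnv}(b) applies at~$y$. The one delicate point, which I expect to be the main obstacle, is that the corollary asks for a strictly positive \emph{infimum} over $\tilde f$, whereas the standardized point $y_{\tilde f} = \Sigma_{\tilde f}^{-1/2}(x-\mu_{\tilde f})$ moves around within $\bar B_d(0,1/4)$ as $\tilde f$ varies over $\tilde{\mathcal{F}}_d^{\xi,\eta}$ for a fixed $x$. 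To close this gap I would invoke the uniform strengthening $\inf_{f \in \mathcal{F}_d^{0,I}}\inf_{\|y\| \leq 1/4} f(y) > 0$, which should follow from the pointwise statement of Theorem~\ref{Thm:IntEnv}(b) together with the uniform upper envelope in part~(a) and log-concavity: these ingredients together pin any $f \in \mathcal{F}_d^{0,I}$ from below on compact subsets of its support once a single interior point of positivity is identified.
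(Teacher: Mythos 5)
Your part~(a) is correct and is essentially the paper's argument verbatim: standardise via $f(y) = (\det \Sigma_{\tilde f})^{1/2}\tilde f(\Sigma_{\tilde f}^{1/2}y + \mu_{\tilde f})$, apply Theorem~\ref{Thm:IntEnv}(a), and use $(\det\Sigma_{\tilde f})^{-1/2}\leq(1-\eta)^{-d/2}$ together with $\|\Sigma_{\tilde f}^{-1/2}(x-\mu_{\tilde f})\|\geq(1+\eta)^{-1/2}(\|x\|-\xi)$.

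For part~(b), however, there is a genuine gap, and it sits exactly at the point you flagged. You correctly observe that the standardised point $y_{\tilde f}=\Sigma_{\tilde f}^{-1/2}(x-\mu_{\tilde f})$ moves with $\tilde f$, so you need the uniform statement $\inf_{f\in\mathcal{F}_d^{0,I}}\inf_{\|y\|\leq 1/4}f(y)>0$ rather than the pointwise-in-$x$ statement of Theorem~\ref{Thm:IntEnv}(b). But your proposed derivation of this uniform bound from ``pointwise positivity $+$ the upper envelope $+$ log-concavity'' does not go through. Log-concavity only propagates lower bounds into convex hulls of points at which you already have a \emph{uniform} lower bound: interpolating from finitely many points of norm $\leq 1/4$ gives uniform control on compact subsets of the \emph{open} ball $B_d(0,1/4)$, with constants that may degenerate as those points approach the sphere of radius $1/4$. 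Meanwhile the standardised points can have norm exactly $1/4$ (take $\|x\|=(1-\eta)^{1/2}/4-\xi$ and $\lambda_{\min}(\Sigma_{\tilde f})=1-\eta$), so there is no slack to exploit. More generally, the assertion that a log-concave density bounded above, once positive at an interior point, is bounded below on compact subsets of its support is false near the boundary of its domain, so these ingredients alone cannot deliver the uniform infimum; the upper envelope and the moment constraints do not obviously rescue the argument either, since smallness of $f$ at a single point only forces smallness on a cone beyond it, not on the half-space needed to contradict the mean/variance normalisation. The paper avoids this issue entirely by proving (b) by contradiction with a compactness argument at the level of the corollary: if $\tilde f_n(x_0)\searrow 0$, tightness and Prohorov's theorem (together with Propositions~\ref{Prop:Conv} and~\ref{Prop:Conv3}, which guarantee the limit has a log-concave density lying in $\tilde{\mathcal{F}}_d^{\xi,\eta}$ and that $\tilde f_{n_k}\to\tilde f$ locally uniformly on the interior of its domain) yield a single limit $\tilde f\in\tilde{\mathcal{F}}_d^{\xi,\eta}$ with $x_0\notin\mathrm{int}(\mathrm{dom}\log\tilde f)$; applying Theorem~\ref{Thm:IntEnv}(b) to the standardised limit density at the single point $\Sigma_{\tilde f}^{-1/2}(x_0-\mu_{\tilde f})$ then forces $\|x_0\|>(1-\eta)^{1/2}/4-\xi$. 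To repair your proof you should either adopt this limiting argument, or prove the uniform strengthening of Theorem~\ref{Thm:IntEnv}(b) by the same compactness method applied to a sequence of densities \emph{and} points $y_n\to y_0$ in the closed ball; convexity considerations alone will not close the gap.
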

As an ancillary result, we can also give a precise envelope for the class of one-dimensional log-concave densities having mean zero and with no variance restriction.  Let
\[
\mathcal{F}_1^0 := \bigl\{f \in \mathcal{F}_1: \mu_f = 0\bigr\}.
\]
\begin{prop}
\label{Prop:Env2}
For every $x_0 \in \mathbb{R}$, we have
\[
\sup_{f \in \mathcal{F}_1^0} f(x_0) = 1/|x_0|,
\]
where we interpret $1/0 = \infty$.
\end{prop}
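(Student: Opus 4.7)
The plan is to split into the trivial case $x_0 = 0$ and, by the reflection $x \mapsto -x$ (which preserves $\mathcal{F}_1^0$), to reduce the remaining cases to $x_0 > 0$.  The case $x_0 = 0$ is settled by the sequence $f_n := (n/2)\mathbbm{1}_{[-1/n,1/n]}$, which lies in $\mathcal{F}_1^0$ and has $f_n(0) = n/2 \to \infty$.  For $x_0 > 0$, the lower bound $\sup \geq 1/x_0$ follows by exhibiting the log-linear density
\[
f^*(x) := \frac{1}{x_0}\,e^{(x - x_0)/x_0}\,\mathbbm{1}_{\{x \leq x_0\}},
\]
which lies in $\mathcal{F}_1^0$ (upper semi-continuity and log-concavity are immediate, and a one-line substitution shows $\int f^* = 1$ and $\int x f^*(x)\,dx = 0$) and satisfies $f^*(x_0) = 1/x_0$.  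It then remains to prove $f(x_0) \leq 1/x_0$ for every $f \in \mathcal{F}_1^0$.

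The heart of the argument is a comparison lemma: if $h$ is a log-concave density supported on $(-\infty, b]$ with $h(b) = h_0 > 0$ and mean $\mu_h$, then $h_0(b - \mu_h) \leq 1$.  Substituting $u = b - x$ reduces this to showing that any log-concave density $\tilde h$ on $[0,\infty)$ with $\tilde h(0) = h_0$ satisfies $\int_0^\infty u \tilde h(u)\,du \leq 1/h_0$.  I would prove this by comparing $\tilde h$ with the exponential extremal $\tilde h_*(u) := h_0 e^{-h_0 u}$, which has unit mass and first moment $1/h_0$.  Since $\log \tilde h - \log \tilde h_*$ is concave (concave minus linear) and vanishes at $u = 0$, its super-level set at height $0$ is an interval $[0, r]$ with $r \in [0,\infty]$; the endpoint values $r = 0$ and $r = \infty$ are excluded or collapse to equality via the normalisation $\int \tilde h = \int \tilde h_* = 1$.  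For $r \in (0,\infty)$, the sign of $\tilde h - \tilde h_*$ is non-negative on $[0,r]$ and non-positive on $[r,\infty)$, and the pointwise bound $u(\tilde h(u) - \tilde h_*(u)) \leq r(\tilde h(u) - \tilde h_*(u))$ (which holds on both halves because $u \leq r$ on the first and $u \geq r$ on the second) integrates to
\[
\int_0^\infty u(\tilde h(u) - \tilde h_*(u))\,du \;\leq\; r \int_0^\infty (\tilde h(u) - \tilde h_*(u))\,du \;=\; 0.
\]

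To deduce the proposition from the lemma, I would condition on $\{X \leq x_0\}$.  Let $F(x_0) := \int_{-\infty}^{x_0} f$, which is strictly positive whenever $y_0 := f(x_0) > 0$ (otherwise the support of $f$ would lie in $[x_0,\infty)$, forcing $\mu_f \geq x_0 > 0$).  The conditional density $\hat f := F(x_0)^{-1} f\,\mathbbm{1}_{\{\cdot \leq x_0\}}$ is log-concave on $(-\infty, x_0]$ with $\hat f(x_0) = y_0/F(x_0)$, so the lemma applied to $\hat f$ gives $y_0(x_0 - \mu_{\hat f}) \leq F(x_0)$.  Using $\mu_f = 0$ to write $\mu_{\hat f} = -F(x_0)^{-1}\int_{x_0}^\infty x f(x)\,dx$ together with the crude bound $\int_{x_0}^\infty x f \geq x_0(1 - F(x_0))$, I would obtain $x_0 - \mu_{\hat f} \geq x_0/F(x_0)$, so
\[
\frac{y_0 x_0}{F(x_0)} \;\leq\; y_0(x_0 - \mu_{\hat f}) \;\leq\; F(x_0),
\]
whence $y_0 x_0 \leq F(x_0)^2 \leq 1$ and $f(x_0) \leq 1/x_0$.

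The main technical obstacle I foresee is the crossing analysis in the lemma: verifying that $\log \tilde h - \log \tilde h_*$ has the claimed structure (a single sign change on $(0, \infty)$, or none) and correctly pivoting the integrand against $r$ on both sides.  The subsequent conditioning step that converts the half-line statement into the mean-zero statement is then essentially routine.
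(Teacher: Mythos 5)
Your proposal is correct, and while the attainment part (the truncated exponential $\tfrac{1}{x_0}e^{-(x_0-x)/x_0}\mathbbm{1}_{\{x\le x_0\}}$, plus a sequence blowing up at $0$) coincides with the paper's, your proof of the upper bound takes a genuinely different route. The paper argues by contradiction: assuming $f^*(x_0)>1/x_0$, it first shows $f^*(0)<f^*(x_0)$ and, via chord-moment estimates, that $\log f^*(0)\ge \log f^*(x_0)-1$; this pins the single crossing point with the extremal density to $(-\infty,0]$, and a CDF comparison ($F^*\le F$ on $(-\infty,x_0]$, strictly except possibly at $x_0$) then forces the mean of $f^*$ to be strictly positive, a contradiction. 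You instead isolate a standalone comparison lemma --- for a log-concave density supported in $(-\infty,b]$ with value $h_0$ at $b$, one has $h_0(b-\mu)\le 1$ --- proved by the single-crossing pivot against the exponential $h_0e^{-h_0u}$ (and the edge cases $r=0$, $r=\infty$ are indeed harmless: the pivot inequality still applies at $r=0$, and $r=\infty$ forces equality a.e.\ by the common normalisation); you then condition on $\{X\le x_0\}$ and feed in the mean-zero constraint through the crude bound $\int_{x_0}^\infty xf\ge x_0\{1-F(x_0)\}$, arriving at $f(x_0)x_0\le F(x_0)^2\le 1$. Both arguments ultimately rest on the same single-crossing comparison with the exponential, but your packaging is more modular: the lemma is a clean statement of independent interest with a transparent equality case, and it replaces the paper's contradiction set-up and the somewhat delicate estimate $\log f^*(0)\ge\log f^*(x_0)-1$ by routine algebra; the paper's direct argument, on the other hand, avoids the conditioning step and works entirely with the original density and its distribution function. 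One small point worth making explicit when you write it up: since $x_0-\mu_{\hat f}\ge x_0/F(x_0)>0$ and $y_0>0$, the two inequalities combine in the intended direction, which is what licenses the final chain $y_0x_0/F(x_0)\le y_0(x_0-\mu_{\hat f})\le F(x_0)$.
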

While the envelope function here is not integrable, this result is reminiscent of the fact that $f(x) \leq 1/(2x)$ for all $x > 0$, when $f$ is a convex density on $(0,\infty)$, which was proved and exploited in \citet{GJW2001}.

\section{Bracketing entropy bounds and global rates of convergence of the log-concave maximum likelihood estimator}
\label{Sec:Bracketing}

Let $\mathcal{G}$ be a class of functions on $\mathbb{R}^d$, and let $\rho$ be a semi-metric on $\mathcal{G}$.  For $\epsilon > 0$, we write $N_{[]}(\epsilon,\mathcal{G},\rho)$ for the $\epsilon$-bracketing number of $\mathcal{G}$ with respect to $\rho$.  Thus $N_{[]}(\epsilon,\mathcal{G},\rho)$ is the minimal $N \in \mathbb{N}$ such that there exist pairs $\{(g_j^L,g_j^U)\}_{j=1}^N$ with the properties that $\rho(g_j^L,g_j^U) \leq \epsilon$ for all $j=1,\ldots,N$ and, for each $g \in \mathcal{G}$, there exists $j^* \in \{1,\ldots,N\}$ satisfying $g_{j^*}^L \leq g \leq g_{j^*}^U$.  The following entropy bound is key to establishing the rate of convergence of the log-concave maximum likelihood estimator in Hellinger distance.
\begin{thm}
\label{Thm:BracketingBounds}
Let $\eta_d > 0$ be taken from Lemma~\ref{Lemma:ThreeTerms} in the Appendix.  

(i) There exist $\overline{K}_1, \overline{K}_2, \overline{K}_3  \in (0,\infty)$ such that
\[
\log N_{[]}(\epsilon,\tilde{\mathcal{F}}_d^{1,\eta_d},h) \leq \left\{ \begin{array}{ll} \overline{K}_1 \epsilon^{-1/2} & \mbox{when $d=1$} \\
\overline{K}_2 \epsilon^{-1}\log_{++}^{3/2}(1/\epsilon) & \mbox{when $d=2$} \\
\overline{K}_3 \epsilon^{-2} & \mbox{when $d=3$,} \end{array} \right.
\]
for all $\epsilon > 0$, where $\log_{++}(x) := \max(1,\log x)$.

(ii) For every $d \in \mathbb{N}$, there exist $\epsilon_d \in (0,1]$ and $\underline{K}_d \in (0,\infty)$ such that
\[
\log N_{[]}(\epsilon,\tilde{\mathcal{F}}_d^{1,\eta_d},h) \geq \underline{K}_d \max\{\epsilon^{-d/2},\epsilon^{-(d-1)}\}
\]
for all $\epsilon \in (0,\epsilon_d]$.  
\end{thm}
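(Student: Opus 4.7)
My plan proves each direction of Theorem~\ref{Thm:BracketingBounds} via a construction that separates the shape of the convex support of a log-concave density from the behaviour of its log-density on that support.

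\textbf{Upper bound (part (i)).} Given $\tilde f = e^\phi \in \tilde{\mathcal{F}}_d^{1,\eta_d}$, Corollary~\ref{Cor:Transform}(a) supplies an envelope $F(x) \lesssim e^{-\beta\|x\|}$. I first truncate: setting $R_\epsilon \asymp \log(1/\epsilon)$, the tail $\{\|x\| > R_\epsilon\}$ contributes $O(\epsilon^2)$ to the squared Hellinger width of any bracket that agrees with $F$ on the tail, so I need only bracket restrictions to $B_{R_\epsilon}$. Inside $B_{R_\epsilon}$ I combine two coverings: (a) a Bronshtein-type Hausdorff $\delta$-net of the convex subsets of $B_{R_\epsilon}$, of log-cardinality $\lesssim (R_\epsilon/\delta)^{(d-1)/2}$, with inner/outer thickenings $K_j^- \subseteq K_j \subseteq K_j^+$ having volume gap $O(\delta R_\epsilon^{d-1})$; and (b) on each $K_j^+$, a sup-norm $\delta'$-bracketing of bounded concave functions, of log-cardinality $\lesssim (R_\epsilon/\delta')^{d/2}$. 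Pairing these gives, for each choice, a Hellinger bracket
\[
\bigl(e^{\phi^L}\mathbbm{1}_{K_j^-},\ e^{\phi^U}\mathbbm{1}_{K_j^+} + F\mathbbm{1}_{B_{R_\epsilon}^c}\bigr),
\]
whose squared width is of order $(\delta')^2 + \delta R_\epsilon^{d-1} + \epsilon^2$. Balancing $\delta' \asymp \epsilon$ and $\delta \asymp \epsilon^2/R_\epsilon^{d-1}$ and summing over the product net yields total log-bracketing entropy bounded by $\mathrm{polylog}(1/\epsilon)\bigl(\epsilon^{-(d-1)} + \epsilon^{-d/2}\bigr)$. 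In $d=1$ the support term is trivial and only $\epsilon^{-1/2}$ survives; in $d=2$ the two terms balance at $\epsilon^{-1}$ and tracking the powers of $R_\epsilon$ and of the sup-norm range produces the $\log^{3/2}$ penalty; in $d=3$ the support term $\epsilon^{-2}$ dominates.

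The main obstacle is that $\phi$ is unbounded below near $\mathrm{bd}(K_\phi)$, so sup-norm bracketing is not directly available on $K_j^+$. I would handle this by a concentric dyadic shell decomposition inside $K_j^+$: on each shell, Corollary~\ref{Cor:Transform}(a) together with log-concavity gives a controlled range for $\phi$, and the ``boundary layer'' contributes little Hellinger mass regardless of the shape of $\phi$ there, since a log-concave density of unit mass places vanishing mass near the boundary of its support. Getting the clean $\epsilon^{-2}$ in $d=3$ (with no extra log factor) requires this shell decomposition to be done sharply so that the logarithmic surcharges in the support term sum geometrically rather than multiplicatively.

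\textbf{Lower bound (part (ii)).} I would exhibit two Hellinger-$\epsilon$-separated packings of $\tilde{\mathcal{F}}_d^{1,\eta_d}$ and take the larger, since $\log N_{[]} \geq \log M$. The ``value'' packing fixes a smooth, strongly log-concave reference $f_0 \in \tilde{\mathcal{F}}_d^{1,\eta_d}$ (e.g.\ a suitably scaled Gaussian), partitions a compact subregion of its bulk into $N \asymp \epsilon^{-d/2}$ disjoint cubes of side $\asymp \epsilon^{1/2}$, and adds independent concave bumps of amplitude $\asymp \epsilon$ on each cube; concavity of the sum is automatic because the bumps have disjoint supports and each is concave. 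A standard calculation gives pairwise Hellinger distance $\asymp \epsilon \sqrt{d_H(\alpha,\alpha')/N}$ between the $2^N$ indexed densities, so by Varshamov--Gilbert I extract $\exp(cN) = \exp(c\epsilon^{-d/2})$ of them pairwise $\gtrsim \epsilon$-separated, after an $O(\epsilon)$ affine renormalisation to re-enter $\tilde{\mathcal{F}}_d^{1,\eta_d}$. The ``support'' packing uses the Bronshtein lower bound: a fixed ball contains $\exp(c_d \delta^{-(d-1)/2})$ convex bodies whose pairwise symmetric differences have volume $\gtrsim \delta$, and appropriately normalised uniform densities on these bodies are pairwise Hellinger-$\gtrsim \sqrt{\delta}$-separated; choosing $\delta \asymp \epsilon^2$ gives log-packing $\gtrsim \epsilon^{-(d-1)}$. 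Taking the larger of the two exponents yields the claimed $\max\{\epsilon^{-d/2},\epsilon^{-(d-1)}\}$.
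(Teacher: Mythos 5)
Your lower-bound plan is essentially the paper's: for the $\epsilon^{-(d-1)}$ term the paper also packs $\tilde{\mathcal{F}}_d^{1,\eta_d}$ with uniform densities on cap-perturbations of a ball (rescaled so the covariance is near $I$) and applies Gilbert--Varshamov, and for $d=1$ it uses chord-replacements on a raised semicircle in place of your additive bumps. One caveat: your claim that ``concavity of the sum is automatic because the bumps have disjoint supports and each is concave'' is false --- a concave bump on a cube extended by zero is not concave on $\mathbb{R}^d$, so the perturbed log-density is not automatically concave. This is repairable (take a strongly log-concave base and $C^2$ bumps of amplitude $\asymp\epsilon$ on cubes of side $\asymp\epsilon^{1/2}$, whose Hessians are then $O(1)$ and can be absorbed by the base curvature with small constants, or perturb by chords as the paper does), and in any case the $\epsilon^{-d/2}$ packing is only needed for $d\le 2$; you must also actually verify the mean/covariance constraints defining $\tilde{\mathcal{F}}_d^{1,\eta_d}$, which the paper does by a careful choice of scaling.

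The upper bound, however, has two genuine gaps. First, your step (b) assumes a sup-norm $\delta'$-bracketing of bounded concave functions on an arbitrary convex body $K_j^+$ with log-cardinality $\lesssim (R_\epsilon/\delta')^{d/2}$. No such bound exists: without a uniform Lipschitz constraint the class of concave functions with values in $[-B,0]$ on a convex body is not even totally bounded in the supremum norm (functions can drop from $0$ to $-B$ arbitrarily steeply near the boundary), which is why Guntuboyina--Sen and Gao--Wellner work in $L_r$, $r<\infty$. The correct $L_2$-bracketing bound on an \emph{arbitrary} convex domain in $d=2,3$ is not off-the-shelf; it is the main technical content of the paper (Proposition~\ref{Prop:BoundedBrackets}), proved via Bronshteyn--Ivanov polytopal inner layers, explicit triangulations of the regions between layers, and the Gao--Wellner bound on simplices, and it is precisely what is unavailable for $d\ge 4$. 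Second, your device for the unboundedness of $\phi$ below --- dyadic \emph{spatial} shells near $\mathrm{bd}(K_\phi)$ justified by the claim that a log-concave density ``places vanishing mass near the boundary of its support'' --- rests on a false premise: a uniform density on a convex body is constant up to its boundary, so there is no decay in the distance-to-boundary variable. What works, and what the paper does, is to peel in the \emph{value} of the log-density: on the $k$th class $\{\phi\ge -y_k\}$ with $y_k=2^{k/2}$, the domain-bracketing tolerance can be relaxed to $\epsilon^2 e^{y_{k-1}}$ because the resulting error is weighted by $e^{-2y_{k-1}}$, and these geometric factors (together with envelope-scaled accuracies on a tiling by unit cubes rather than one ball of radius $\log(1/\epsilon)$) are exactly what remove the logarithms so that $d=3$ gives a clean $\epsilon^{-2}$. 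Your own accounting yields $\mathrm{polylog}(1/\epsilon)\,\epsilon^{-2}$ in $d=3$, and the proposed ``sharp shell decomposition'' cannot close that gap because the underlying monotonicity it relies on is not there.
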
 
Note that in this theorem, $\eta_d$ depends only on $d$.  The proof of Theorem~\ref{Thm:BracketingBounds} is long, so we give a broad outline here.  For the upper bound, we first consider the problem of finding a set of Hellinger brackets for the class of restrictions of densities $\tilde{f} \in \tilde{\mathcal{F}}_d^{1,\eta_d}$ to $[0,1]^d$.  It is well-known \citep[e.g][Corollary~2.7.10]{vanderVaartWellner1996} that the class of concave functions from a $d$-dimensional compact, convex subset of $\mathbb{R}^d$ to $[-1,0]$ with uniform Lipschitz constant $L > 0$ satisfies a uniform norm bracketing entropy bound of the form $(1+L)^{d/2}\epsilon^{-d/2}$.  The class $\{\log \tilde{f}: \tilde{f} \in \tilde{\mathcal{F}}_d^{1,\eta_d}\}$ does not satisfy a uniform Lipschitz condition, however.  Nevertheless, some hope is provided by a result of \citet{GuntuboyinaSen2013}, who showed that when working with rectangular domains and the $L_2$-metric (or more generally, $L_r$-metrics with $r \in [1,\infty)$), a metric entropy bound of the same order in $\epsilon$ can be obtained without the Lipschitz condition (but still with the uniform lower bound condition).  This result was recently extended both from metric to bracketing entropy, and from rectangular to convex polyhedral domains, by \citet{GaoWellner2015}.  Unfortunately, it remains a substantial challenge to provide bracketing entropy bounds for general convex domains when $d \geq 2$.  In Proposition~\ref{Prop:BoundedBrackets} in the Appendix, we are able to obtain such bounds when $d=2,3$ by constructing inner layers of convex polyhedral approximations where the number of simplices required to triangulate the region between successive layers can be controlled using results from discrete convex geometry.  It is the absence of corresponding convex geometry results for $d \geq 4$ that means we are currently unable to provide bracketing entropy bounds in these higher dimensions.
 
A further challenge is to deal with the fact that if $\tilde{f} \in \tilde{\mathcal{F}}_d^{1,\eta_d}$, then $\log \tilde{f}$ can take negative values of arbitrarily large magnitude, and may even be $-\infty$.  We therefore define a finite sequence of levels $y_0,y_1,\ldots,y_{k_0}$, where $y_0$ is a uniform upper bound for the class $\{\log \tilde{f}:\tilde{f} \in \tilde{\mathcal{F}}_d^{1,\eta_d}\}$ obtained from Corollary~\ref{Cor:Transform}, and divide the class of restrictions of densities $\tilde{f} \in \tilde{\mathcal{F}}_d^{1,\eta_d}$ to $[0,1]^d$ into $(k_0+1)$ subclasses, where in the $k$th class ($k=1,\ldots,k_0$), the log-density is bounded below by $-y_k$ on its domain, with the remaining functions placed in the $(k_0+1)$th subclass.  The domains are unknown, so we derive inductively upper bounds for the bracketing Hellinger entropy of the $k$th class ($k=1,\ldots,k_0$) by first constructing a bracketing set for its domain, and then, for each such bracket, using Proposition~\ref{Prop:BoundedBrackets} to construct a bracketing set for the log-density on the inner domain-bracketing set.  Since we can only use crude bounds for the brackets on the (small) region between the inner and outer domain bracketing sets, and since the domain of a function in the $k$th subclass can be an arbitrary $d$-dimensional, closed, convex subset of $[0,1]^d$, we need for instance $e^{O(\epsilon^{-2})}$ brackets to cover these domains when $d = 3$.  This is a stark contrast with the univariate setting studied by \citet{DossWellner2015}, where a similar general strategy was introduced, but where only $O(\epsilon^{-2})$ brackets are needed for the domains. 

Crucially, we can afford to be more liberal in the accuracy of our coverage as $k$ increases, because the contribution to the Hellinger distance is small when the log-density has a negative value of large magnitude.  This enables us to show that the total number of brackets required to construct a bracketing set with Hellinger distance at most $\epsilon$ between the brackets is bounded above by an expression not depending on $k_0$.  For the $(k_0+1)$th class, we can modify the brackets used for the $k_0$th class in a straightforward way.

Translations of these brackets can be used to cover the restrictions of densities $\tilde{f} \in \tilde{\mathcal{F}}_d^{1,\eta_d}$ to other unit boxes.  We use our integrable envelope function for the class $\tilde{\mathcal{F}}_d^{1,\eta_d}$ from Corollary~\ref{Cor:Transform} again to allow us to use fewer brackets as the boxes move further from the origin, yet still cover with higher accuracy, enabling us to obtain the desired conclusion.  

For the lower bound, we treat the cases $d=1$ and $d \geq 2$ separately.  In both cases, we use the Gilbert--Varshamov theorem and packing set bounds for the unit sphere to construct a finite subset of $\tilde{\mathcal{F}}_d^{1,\eta_d}$ of the desired cardinality where each pair of functions is well separated in Hellinger distance. The key observation here is that, while in the $d=1$ case it suffices to consider a fixed domain, when $d \geq 2$, the domains of the functions in our finite subset are allowed to vary. 

We are now in a position to state our main result on the supremum risk of the log-concave maximum likelihood estimator for the squared Hellinger loss function.
\begin{thm}
\label{Thm:Main}
Let $\hat{f}_n$ denote the log-concave maximum likelihood estimator based on a sample of size $n$.  Then, for the squared Hellinger loss function,
\[
R(\hat{f}_n,\mathcal{F}_d) = \left\{ \begin{array}{ll} O(n^{-4/5}) & \mbox{if $d=1$} \\
O(n^{-2/3}\log n) & \mbox{if $d=2$} \\
O(n^{-1/2}\log n) & \mbox{if $d=3$.} \end{array} \right.
\]
\end{thm}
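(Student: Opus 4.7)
The proof plan is to combine the bracketing-entropy bounds of Theorem~\ref{Thm:BracketingBounds}(i) with the standard empirical-process theory for maximum likelihood estimators in Hellinger distance (e.g.\ \citet[Theorem~7.4]{vandeGeer2000} or \citet[Theorem~3.4.1]{vanderVaartWellner1996}). The first task is to exploit the affine equivariance of $\hat{f}_n$ to reduce the problem to one over the class $\tilde{\mathcal{F}}_d^{1,\eta_d}$, for which the bracketing entropy has already been controlled. Writing $\bar{X}_n$ and $\hat{\Sigma}_n$ for the sample mean and sample covariance, I would set $Y_i := \hat{\Sigma}_n^{-1/2}(X_i - \bar{X}_n)$ and let $\hat{g}_n$ be the log-concave maximum likelihood estimator of the $Y_i$'s; then $\hat{f}_n(x) = (\det \hat{\Sigma}_n)^{-1/2}\, \hat{g}_n\bigl(\hat{\Sigma}_n^{-1/2}(x - \bar{X}_n)\bigr)$, and the affine invariance of $h^2$ reduces the problem to bounding the Hellinger risk of $\hat{g}_n$ against the correspondingly standardised version of $f_0$. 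This is precisely the content of Lemma~\ref{Lemma:ThreeTerms} from the Appendix, which decomposes $h^2(\hat{f}_n, f_0)$ into a main term---the MLE risk on the standardised sample, whose underlying density lies in $\tilde{\mathcal{F}}_d^{\xi_n, \eta_d}$ for some vanishing $\xi_n$---plus two remainder terms measuring the discrepancy between $(\bar{X}_n, \hat{\Sigma}_n)$ and $(\mu_{f_0}, \Sigma_{f_0})$. These remainder terms are $O(n^{-1})$ uniformly over $f_0 \in \mathcal{F}_d$ by Bernstein-type arguments that exploit the uniform sub-exponential tails from Corollary~\ref{Cor:Transform}(a), and are therefore absorbed into each stated rate.

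It then suffices to bound $\sup_{f_0 \in \tilde{\mathcal{F}}_d^{1,\eta_d}} \mathbb{E}_{f_0}\{h^2(\hat{f}_n, f_0)\}$. The MLE rate theorem asserts that $\mathbb{E}\{h^2(\hat{f}_n, f_0)\} \lesssim r_n^2$ whenever $r_n$ satisfies $\sqrt{n}\, r_n^2 \gtrsim J_{[]}(r_n, \tilde{\mathcal{F}}_d^{1,\eta_d}, h)$, where
\[
J_{[]}(\delta, \mathcal{G}, h) := \int_0^{\delta} \bigl\{1 + \log N_{[]}(u, \mathcal{G}, h)\bigr\}^{1/2}\, du.
\]
Substituting the entropy bounds from Theorem~\ref{Thm:BracketingBounds}(i): when $d=1$ we have $J_{[]}(\delta) \lesssim \delta^{3/4}$, and solving $\sqrt{n}\, r_n^2 \asymp r_n^{3/4}$ yields $r_n^2 \asymp n^{-4/5}$; when $d=2$ we have $J_{[]}(\delta) \lesssim \delta^{1/2} \log_{++}^{3/4}(1/\delta)$, and an analogous calculation gives $r_n^2 \asymp n^{-2/3}\log n$; and when $d=3$, $\sqrt{\log N_{[]}(u)} \lesssim u^{-1}$ renders $J_{[]}(\delta)$ logarithmically divergent at zero, so I would invoke the Birg\'e--Massart variant of the rate theorem in which $J_{[]}$ is replaced by its truncation $\int_{r_n^2}^{r_n} u^{-1}\, du \asymp \log(1/r_n)$, giving $r_n^2 \asymp n^{-1/2}\log n$.

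The principal obstacle is the reduction step. Showing that $\|\bar{X}_n - \mu_{f_0}\|$ and $\|\hat{\Sigma}_n - \Sigma_{f_0}\|$ concentrate at an $n^{-1/2}$-rate, uniformly over $f_0 \in \mathcal{F}_d$, requires propagating the exponential envelope of Corollary~\ref{Cor:Transform}(a) through both the sample moments and the inverse square root $\hat{\Sigma}_n^{-1/2}$; in particular, one must use the lower bound in part~(b) of the same corollary to rule out near-degenerate $\hat{\Sigma}_n$ with sufficiently high probability. A secondary difficulty is that Corollary~\ref{Cor:Transform}(a) supplies only an integrable, not bounded, envelope, so the empirical-process theorem should be applied after truncating the log-density class at a radius $R_n \asymp \log n$, with the tail contributing at most a lower-order term to the Hellinger risk. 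Finally, the $d=3$ case requires carefully aligning the truncated Birg\'e--Massart entropy criterion with the sharp bracketing bound in order to extract the precise $\log n$ factor; as noted after~\eqref{Eq:Rates}, this factor reflects the borderline behaviour of the entropy integral at $d=3$ rather than any looseness in the bracketing estimate itself.
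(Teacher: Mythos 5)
Your overall architecture (affine reduction to a standardised class, then bracketing entropy plus the empirical-process tail bound, with the rate calculations solving $n^{1/2}r_n^2\gtrsim J_{[]}(r_n)$) matches the paper, and your arithmetic for the three rates is correct, including the handling of the divergent entropy integral when $d=3$ (the paper's Theorem~\ref{Thm:vdG} already truncates the integral at $\delta^2/2^{13}$, which plays the role of your Birg\'e--Massart variant). However, there are two genuine gaps in the reduction step. First, you standardise by the \emph{sample} mean and covariance, $Y_i:=\hat{\Sigma}_n^{-1/2}(X_i-\bar{X}_n)$. Affine invariance of $h^2$ does give $h^2(\hat{f}_n,f_0)=h^2(\hat{g}_n^Y,g_0^Y)$, but the transformed observations are then a data-dependent transformation of the sample: they are not i.i.d.\ from the (now random) density $g_0^Y$, so Theorem~\ref{Thm:vdG} cannot be applied to them, and your ``main term plus remainder terms'' decomposition is not available as stated. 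The paper instead standardises by the \emph{population} moments, so that $Z_i=\Sigma^{-1/2}(X_i-\mu)$ are i.i.d.\ from a fixed $g_0\in\mathcal{F}_d^{0,I}\subseteq\tilde{\mathcal{F}}_d^{1,\eta_d}$ and the empirical-process machinery applies directly.

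Second, and more importantly, the step your sketch omits is showing that the \emph{estimator} lands in the entropy-controlled class: the MLE maximises over all of $\mathcal{F}_d$, whose bracketing entropy is infinite, so one must prove $\mathbb{P}_{g_0}(\hat{g}_n\notin\tilde{\mathcal{F}}_d^{1,\eta_d})$ is small uniformly over $g_0\in\mathcal{F}_d^{0,I}$ (then, since $h^2\leq 2$, this bad event contributes $O(\rho_{n,d}^{-2})$ to the risk). This is exactly what Lemma~\ref{Lemma:ThreeTerms} does --- it is not a decomposition of $h^2$ into a main term and moment-discrepancy remainders --- and its hardest ingredient is the lower bound on $\lambda_{\mathrm{min}}(\Sigma_{\hat{g}_n})$, the covariance of the \emph{fitted} density, which does not follow from concentration of $\bar{X}_n$ and the sample covariance alone. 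The paper proves it via a compactness argument giving a uniform upper bound $M_{0,d}$ on $\sup_x\log\psi^*(P)(x)$ over near-isotropic $P$ (using Theorems~2.2 and~2.15 of D\"umbgen, Samworth and Schuhmacher), combined with Theorem~\ref{Thm:IntEnv}(b); your appeal to Corollary~\ref{Cor:Transform}(b) to ``rule out near-degenerate $\hat{\Sigma}_n$'' addresses the sample covariance, not the fitted one, and so does not close this gap. Finally, your proposed truncation of the log-density class at radius $R_n\asymp\log n$ is unnecessary: Theorem~\ref{Thm:BracketingBounds} already provides global Hellinger brackets for $\tilde{\mathcal{F}}_d^{1,\eta_d}$, precisely because the integrable envelope of Corollary~\ref{Cor:Transform}(a) was built into its proof.
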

The proof of this theorem first involves standardising the data and using affine equivariance to reduce the problem to that of bounding the supremum risk over the class of log-concave densities with mean vector 0 and identity covariance matrix.  Writing $\hat{g}_n$ for the log-concave maximum likelihood estimator for the standardised data, we show in Lemma~\ref{Lemma:ThreeTerms} in the Appendix that  
\[
\sup_{g_0 \in \mathcal{F}_d^{0,I}} \mathbb{P}_{g_0}(\hat{g}_n \notin \tilde{\mathcal{F}}_d^{1,\eta_d}) = O(n^{-1}).
\]
As well as using various known results on the relationship between the mean vector and covariance matrix of the log-concave maximum likelihood estimator in relation to its sample counterparts, the main step here is to show that, provided none of the sample covariance matrix eigenvalues are too large, the only way an eigenvalue of the covariance matrix corresponding to the maximum likelihood estimator can be small is if an eigenvalue of the sample covariance matrix is small.

The other part of the proof of Theorem~\ref{Thm:Main} is to control
\[
\sup_{g_0 \in \mathcal{F}_d^{0,I}} \mathbb{E}\bigl\{h^2(\hat{g}_n,g_0)\mathbbm{1}_{\{\hat{g}_n \in \tilde{\mathcal{F}}_d^{1,\eta_d}\}}\bigr\}.
\]
This can be done by appealing to empirical process theory for maximum likelihood estimators, and using the Hellinger bracketing entropy bounds developed in Theorem~\ref{Thm:BracketingBounds}. 

\vspace{1cm}

{\large \textbf{Acknowledgements}}: The work of the second author was supported by an EPSRC Early Career Fellowship and a grant from the Leverhulme Trust.  The authors are very grateful for helpful comments on an earlier draft from Charles Doss, Roy Han and Jon Wellner, as well as anonymous reviewers.

\section{Appendix}
\label{Sec:Appendix}

\subsection{Proofs from Section~\ref{Sec:LowerBounds}}
\label{Sec:ProofLowerBound}

\begin{proof}[Proof of Theorem~\ref{Thm:LowerBound}] \emph{The case $d=1$}: We define a finite subset $\bar{\mathcal{F}}_1$ of $\mathcal{F}_1$ to which we can apply the version of Assouad's lemma stated as Lemma~\ref{Lemma:Kim} in Section~\ref{Sec:ProofLowerBound}.  Recall from the description of the proof in Section~\ref{Sec:LowerBounds} that the densities in our finite subset are perturbations of a semi-circle, raised to be bounded away from zero on its support.  Fix $\epsilon := n^{-1/5}/2 \leq 1/2$ and set $r := 2/3$ and $\theta_k := k\arcsin \epsilon$ for $k \in \mathbb{N}$.  Let $K := \lfloor \frac{\pi}{6\theta_1} \rfloor \geq 1$, so $K$ is the largest positive integer such that $\cos \theta_{2K} \geq 1/2$.  For $k=1,\ldots,K$ and $\ell \in \{0,1\}$, set 
\[
x_{k,\ell} := (-1)^\ell r(1-\epsilon^2)^{1/2}\sin \theta_k.
\]
For $k=1,\ldots,K$, we also define intervals
\[
R_{k,0} := (r \sin \theta_{2k-2},r \sin \theta_{2k}),
\]
and set $R_{k,1} := -R_{k,0} = \{-x:x \in R_{k,0}\}$.  Writing $y_k := r(1-\epsilon^2)^{1/2}\cos \theta_{2k-1}$, for $k=1,\ldots,K$, we define auxiliary functions 
\begin{align*}
\psi_k(x) &:= (r^2-x^2)^{1/2} \mathbbm{1}_{\{x \in R_{k,0}\}} + \frac{1}{y_k}\{(1 - \epsilon^2)r^2 - x_{k,1} x\}\mathbbm{1}_{\{x \in R_{k,1}\}}, \\
\tilde{\psi}_k(x) &:= \frac{1}{y_k}\{(1 - \epsilon^2)r^2 - x_{k,0} x\}\mathbbm{1}_{\{x \in R_{k,0}\}} + (r^2-x^2)^{1/2} \mathbbm{1}_{\{x \in R_{k,1}\}}.
\end{align*}
A generic perturbation $\tilde{\psi}_k$ is illustrated in Figure~\ref{Fig:psi}.  
\begin{figure}
\begin{center}
\includegraphics[width=0.7\textwidth]{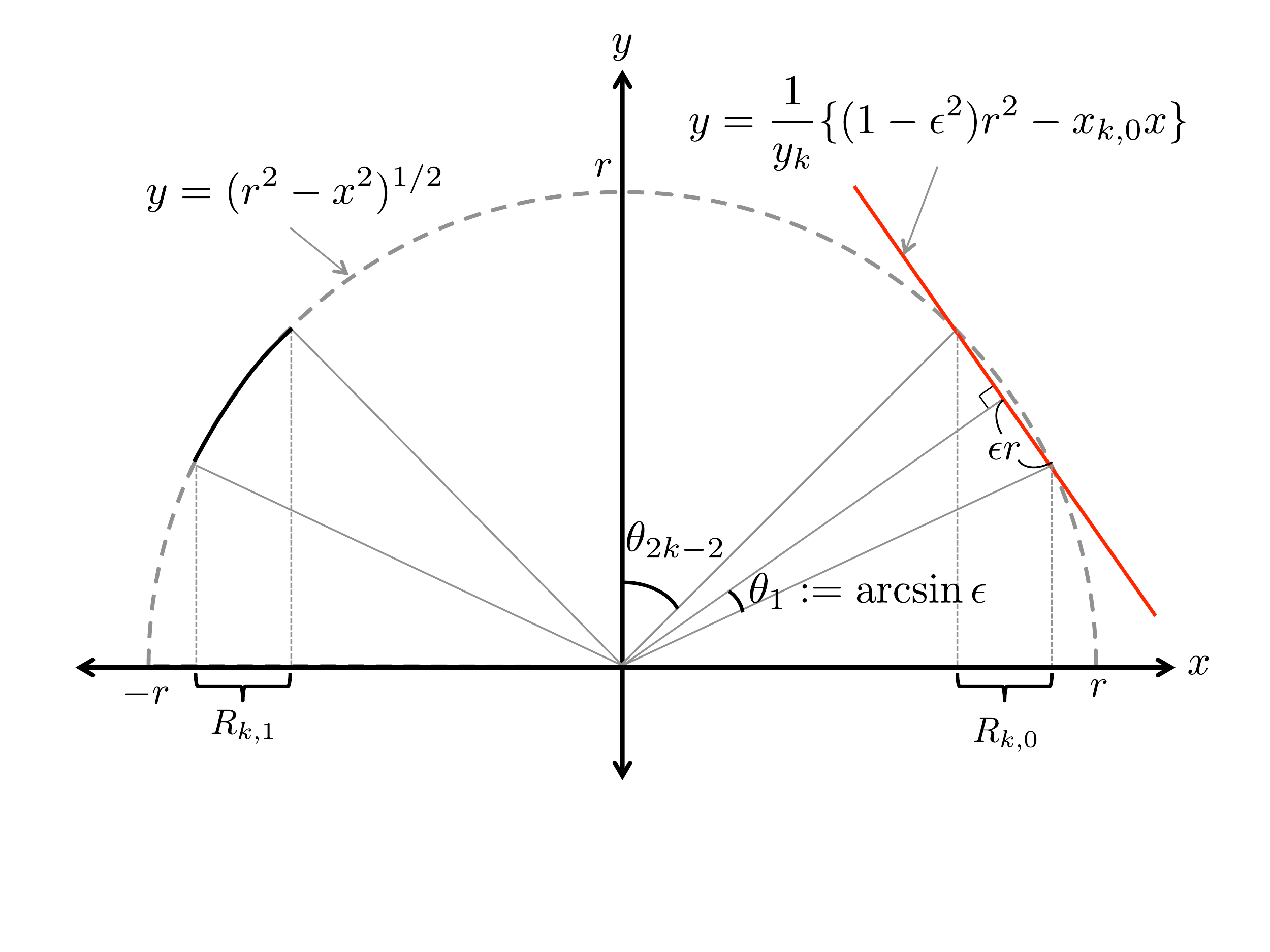}
\caption{\label{Fig:psi} A generic perturbation function $\tilde{\psi}_k$ used in the proof of Theorem~\ref{Thm:LowerBound} when $d=1$.}
\end{center}
\end{figure}
Finally, then, we can define $\bar{\mathcal{F}}_1 := \{f_\alpha: \alpha = (\alpha_1,\ldots,\alpha_K)^T \in \{0,1\}^K\}$, where
\[
f_\alpha(x) := c_{r,K,\epsilon}\mathbbm{1}_{\{|x| \leq r\}} + (r^2-x^2)^{1/2} \mathbbm{1}_{\{|x| \leq r\}}\mathbbm{1}_{\{x \notin \cup_{k=1}^K (R_{k,0} \cup R_{k,1})\}} + \sum_{k=1}^K \bigl\{\alpha_k \psi_k(x) + (1-\alpha_k)\tilde{\psi}_k(x)\bigr\},
\]
and
\[
c_{r,K,\epsilon} := \frac{1}{2r}\biggl[1 - \frac{1}{2}\pi r^2 + Kr^2\{\theta_1 - \epsilon(1-\epsilon^2)^{1/2}\}\biggr].
\]
With $r=2/3$, we have $c_{r,K,\epsilon} \geq \frac{3}{4}(1 - 2\pi/9) =: c_0$.  Note that the hypograph (or subgraph) of $f_\alpha$, defined by $\mathrm{hyp}(f_\alpha) := \{(x,y) \in \mathbb{R} \times \mathbb{R}: y \leq f_\alpha(x)\}$, is the intersection of the closed, convex set $\{(x,y) \in [-r,r] \times \mathbb{R}:y \leq c_{r,K,\epsilon} + (r^2-x^2)^{1/2}\}$ with $K$ closed halfspaces, so is closed and convex.  Hence, $f_\alpha$ is upper semi-continuous and concave on $[-r,r]$, so by, e.g., \citet[][p.~86]{DharmadhikariJoagDev1988}, $\bar{\mathcal{F}}_1 \subseteq \mathcal{F}_1$, and it remains to verify the two conditions of Lemma~\ref{Lemma:Kim}.  First, note that if $\alpha = (\alpha_1,\ldots,\alpha_K)^T,\beta = (\beta_1,\ldots,\beta_K)^T \in \{0,1\}^K$, then
\[
h^2(f_\alpha,f_\beta) = \int_{\mathbb{R}^d} \frac{(f_\alpha^{1/2} + f_\beta^{1/2})^2}{(f_\alpha^{1/2} + f_\beta^{1/2})^2}(f_\alpha^{1/2} - f_\beta^{1/2})^2 \geq \frac{1}{4(r+c_{r,K,\epsilon})}L_2^2(f_\alpha,f_\beta) \geq \frac{1}{4}L_2^2(f_\alpha,f_\beta).
\]
Moreover, if $|\alpha_k - \beta_k| = 1$, then
\begin{align*}
\int_{R_{k,0}} (f_\alpha - f_\beta)^2 &= \int_{r\sin \theta_{2k-2}}^{r\sin \theta_{2k}} \biggl[(r^2-x^2)^{1/2} - \frac{1}{y_k}\{(1 - \epsilon^2)r^2 - x_{k,0} x\}\biggr]^2 \, dx \\
&=: r^3 \int_{\sin(\theta_{2k-1} - \theta_1)}^{\sin(\theta_{2k-1} + \theta_1)}  I(t,\theta_{2k-1})^2 \, dt,
\end{align*}
say, where
\[
I(t,\theta) := (1-t^2)^{1/2} - \frac{(1-\epsilon^2)^{1/2}}{\cos \theta} + t \tan \theta.
\]
It is convenient to observe first that
\[
\int_{\sin(\theta - \theta_1)}^{\sin(\theta + \theta_1)} I(t,\theta)^2 \, dt
\]
is a monotonically increasing function of $\theta \in [0,\pi/3]$.  To check this, note that by differentiating under the integral, splitting the range of integration into two intervals of equal length, and then making the substitution $t \mapsto 2(1-\epsilon^2)^{1/2}\sin \theta - t$ in the left interval, we find that 
\begin{align*}
\frac{d}{d\theta} \int_{\sin(\theta - \theta_1)}^{\sin(\theta + \theta_1)} I(t,\theta)^2 \, dt &= 2\int_{\sin(\theta - \theta_1)}^{\sin(\theta + \theta_1)}  I(t,\theta)\biggl\{\frac{t - (1-\epsilon^2)^{1/2}\sin \theta}{\cos^2 \theta}\biggr\} \, dt \\
&=:  2\int_{(1-\epsilon^2)^{1/2}\sin \theta}^{\sin(\theta + \theta_1)} J(t,\theta)\biggl\{\frac{t - (1-\epsilon^2)^{1/2}\sin \theta}{\cos^2 \theta}\biggr\} \, dt,
\end{align*}
where
\[
J(t,\theta) := I(t,\theta) - I(2(1-\epsilon^2)^{1/2}\sin \theta - t,\theta).
\]
But $J((1-\epsilon^2)^{1/2}\sin \theta,\theta) = J(\sin(\theta + \theta_1),\theta) = 0$, and for $t \in [(1-\epsilon^2)^{1/2}\sin \theta,\sin(\theta + \theta_1)]$, we have
\[
\frac{\partial^2}{\partial t^2} J(t,\theta) = -(1-t^2)^{-3/2} + [1-\{2(1-\epsilon^2)^{1/2}\sin \theta - t\}^2]^{-3/2} \leq 0.
\]
We deduce that $J(t,\theta) \geq 0$ for all $t \in [(1-\epsilon^2)^{1/2}\sin \theta,\sin(\theta + \theta_1)]$, and our desired monotonicity as a function of $\theta$ follows.  Hence, for any $\alpha,\beta \in \{0,1\}^K$, we have
\begin{align*}
h^2(f_\alpha,f_\beta) \geq \frac{1}{4}L_2^2(f_\alpha,f_\beta) &\geq \frac{1}{2}\|\alpha - \beta\|_0r^3  \int_{-\epsilon}^{\epsilon} \{(1-t^2)^{1/2} - (1-\epsilon^2)^{1/2}\}^2 \, dt \\
&= \frac{1}{2}\|\alpha - \beta\|_0r^3\biggl\{2\epsilon - \frac{2\epsilon^3}{3} - 2(1-\epsilon^2)^{1/2}\theta_1\biggr\} \\
&\geq \frac{1}{2}\|\alpha - \beta\|_0r^3\biggl\{2\epsilon - \frac{2\epsilon^3}{3} - 2\biggl(1-\frac{\epsilon^2}{2} - \frac{\epsilon^4}{8}\biggr)\biggl(\epsilon + \frac{\epsilon^3}{6} + \frac{113\epsilon^5}{840}\biggr)\biggr\} \\
&\geq \frac{31}{420}\|\alpha-\beta\|_0r^3\epsilon^5.
\end{align*}
This calculation shows that, for the squared Hellinger loss function, we can take $\gamma := \frac{31}{420}r^3 \epsilon^5$ in condition~(i) of Lemma~\ref{Lemma:Kim}.

We now turn to condition (ii).  Since $h^2(f_\alpha,f_\beta) \leq L_2^2(f_\alpha,f_\beta)/(4c_0)$ for all $f_\alpha, f_\beta \in \bar{\mathcal{F}}_1$, it suffices to find an upper bound for $L_2^2(f_\alpha,f_\beta)$ when $\|\alpha-\beta\|_0 = 1$.  Using our monotonicity property again, observe that in that case, 
\begin{align*}
\frac{L_2^2(f_\alpha,f_\beta)}{2r^3} &\leq \int_{\sin(\pi/3 - \theta_1)}^{\sin(\pi/3 + \theta_1)} \{(1-t^2)^{1/2} - 2(1-\epsilon^2)^{1/2} + \sqrt{3}t\}^2 \, dt \\
&= 4\epsilon - \frac{4\epsilon^3}{3} - 4(1-\epsilon^2)^{1/2}\theta_1 \\
&\leq 4\epsilon - \frac{4\epsilon^3}{3} - 4\biggl(1-\frac{\epsilon^2}{2}-\frac{7\epsilon^4}{48}\biggr)\biggl(\epsilon + \frac{\epsilon^3}{6}+\frac{3\epsilon^5}{40}\biggr) \leq \epsilon^5.
\end{align*}
This shows that in condition (ii) of Lemma~\ref{Lemma:Kim}, we may take $C := nr^3\epsilon^5/(2c_0)$.  From Lemma~\ref{Lemma:Kim}, and using the fact that $\lfloor \frac{\pi}{6\theta_1}\rfloor \epsilon \geq \frac{\pi}{24\arcsin(1/2)} = 1/4$ for $\epsilon \leq 1/2$, we conclude that
\[
\inf_{\tilde{f}_n \in \tilde{\mathcal{F}}_n} R(\tilde{f}_n,\mathcal{F}_1) \geq \frac{K}{8}(1-C^{1/2})\gamma \geq \frac{1}{28000} n^{-4/5}.
\]

\emph{The case $d \geq 2$}:  We again apply Lemma~\ref{Lemma:Kim}, but as described in Section~\ref{Sec:LowerBounds} the construction of our finite subset $\bar{\mathcal{F}}_d$ of $\mathcal{F}_d$ is quite different, being based around uniform densities on perturbations of a Euclidean ball.  Let 
\[
\epsilon := \biggl\{\frac{\pi^{1/2}(d-1)^{1/2}}{6^{1/2}}\biggr\}^{1/(d-1)}\frac{1}{2}n^{-1/(d+1)} \leq \frac{1}{2},
\]
Letting $\mathcal{S}_1 := \bar{B}_d(0,1) \setminus B_d(0,1)$ denote the unit Euclidean sphere, we use the well-known fact, proved for convenience in Lemma~\ref{Lemma:PackingSet} in Section~\ref{Sec:ProofLowerBound}, that there exist $x_1,\ldots,x_N \in \mathcal{S}_1$, with $N = N_{2\epsilon} := \lceil \frac{(2\pi)^{1/2}(d-1)^{1/2}}{3^{1/2}2^{d-1}}\epsilon^{-(d-1)}\rceil$, such that $\|x_j - x_k\| > 2\epsilon$ for all $j \neq k$.   Since $N \geq 2$, we can set $K := \lfloor N/2 \rfloor \in \mathbb{N}$.  For $k=1,\ldots,K$ and $\ell \in \{0,1\}$, let $x_{k,\ell} := x_{\ell K + k}$, and define the halfspaces
\[
\mathcal{H}_{k,\ell}^- := \{x \in \mathbb{R}^d: (x_{k,\ell})^T x \leq (1-\epsilon^2/2)\}.
\]
We can now define $\bar{\mathcal{F}}_d := \{f_{\alpha}:\alpha = (\alpha_1,\ldots,\alpha_K)^T \in \{0,1\}^K\}$, where
\[
f_\alpha(x) := \frac{1}{c_{K,\epsilon}}\biggl[\mathbbm{1}_{\{x \in \cap_{k=1}^K \cap_{\ell=0}^1 \mathcal{H}_{k,\ell}^- \cap \bar{B}_d(0,1)\}} + \sum_{k=1}^K \{\alpha_k\mathbbm{1}_{\{x \in \bar{B}_d(0,1) \setminus \mathcal{H}_{k,0}^-\}} + (1-\alpha_k)\mathbbm{1}_{\{x \in \bar{B}_d(0,1) \setminus \mathcal{H}_{k,1}^-\}}\}\biggr],
\]
and
\begin{equation}
\label{Eq:cKe}
c_{K,\epsilon} := \frac{\pi^{d/2}}{\Gamma(1+d/2)} - \frac{K}{2}\frac{\pi^{(d-1)/2}}{\Gamma((d+1)/2)}\int_0^{\epsilon^2-\epsilon^4/4} t^{\frac{d+1}{2}-1}(1-t)^{-1/2} \, dt.
\end{equation}
Thus, each $f_\alpha$ is a uniform density on a closed, convex subset of $\mathbb{R}^d$, so $\bar{\mathcal{F}}_d \subseteq \mathcal{F}_d$.  It is convenient to note that 
\[
\frac{2(d+1)^{1/2}}{3^{1/2}\pi^{1/2}} \leq \frac{\Gamma(1+\frac{d}{2})}{\Gamma(\frac{d+1}{2})} \leq \frac{(d+1)^{1/2}}{2^{1/2}}
\]
for $d \geq 2$.  It follows that
\begin{equation}
\label{Eq:crKe}
\frac{\pi^{d/2}}{2\Gamma(1+d/2)} \leq \frac{\pi^{d/2}}{\Gamma(1+d/2)}\biggl\{1 - \frac{1}{\pi^{1/2}}\frac{K\epsilon^{d+1}}{(d+1)^{1/2}}\biggr\} \leq c_{K,\epsilon} \leq \frac{\pi^{d/2}}{\Gamma(1+d/2)}.
\end{equation}
Again, it remains to verify the conditions of Lemma~\ref{Lemma:Kim}.  First, if $\alpha, \beta \in \{0,1\}^K$, then
\begin{align*}
h^2(f_\alpha,f_\beta) &= \frac{\|\alpha-\beta\|_0}{c_{K,\epsilon}}\frac{\pi^{(d-1)/2}}{\Gamma((d+1)/2)}\int_0^{\epsilon^2-\epsilon^4/4} t^{\frac{d+1}{2}-1}(1-t)^{-1/2} \, dt \\
&\geq \frac{4\|\alpha-\beta\|_0}{3^{1/2}\pi(d+1)^{1/2}}\epsilon^{d+1}(1-\epsilon^2/4)^{(d+1)/2} \geq \frac{4 \times 15^{(d+1)/2}\|\alpha-\beta\|_0}{3^{1/2} \times 16^{(d+1)/2}\pi(d+1)^{1/2}}\epsilon^{d+1}.
\end{align*}
For the squared Hellinger loss function, we may therefore take $\gamma:= \frac{4 \times 15^{(d+1)/2}}{3^{1/2} \times 16^{(d+1)/2}\pi(d+1)^{1/2}} \epsilon^{d+1}$ in condition (i) of Lemma~\ref{Lemma:Kim}.  On the other hand, if $\alpha = (\alpha_1,\ldots,\alpha_K)^T,\beta = (\beta_1,\ldots,\beta_K)^T \in \{0,1\}^K$ satisfy $\|\alpha - \beta\|_0 = 1$, then
\[
h^2(f_\alpha,f_\beta) = \frac{1}{c_{K,\epsilon}}\frac{\pi^{(d-1)/2}}{\Gamma((d+1)/2)}\int_0^{\epsilon^2-\epsilon^4/4} t^{\frac{d+1}{2}-1}(1-t)^{-1/2} \, dt \leq \frac{2}{(d+1)^{1/2}}\epsilon^{d+1}.
\]
This shows that we may take $C :=  \frac{2}{(d+1)^{1/2}}n\epsilon^{d+1}$ in condition (ii) of Lemma~\ref{Lemma:Kim}.  We conclude from Lemma~\ref{Lemma:Kim} that
\[
\inf_{\tilde{f}_n \in \tilde{\mathcal{F}}_n} R(\tilde{f}_n,\mathcal{F}_d) \geq \frac{K}{8}(1-C^{1/2})\gamma \\
\geq \frac{1}{500 \times 2^d}\Bigl(\frac{15}{16}\Bigr)^{(d+1)/2}n^{-2/(d+1)},
\]
as required.
\end{proof}

\subsection{Proofs from Section~\ref{Sec:ConvInt}}

\begin{proof}[Proof of Proposition~\ref{Prop:Conv}]
Suppose that $\dim\bigl(\mathrm{csupp}(\nu)\bigr) = d$.  We first show that $\mathrm{csupp}(\nu) \subseteq \bar{C}$.  Suppose that $x_0 \notin \bar{C}$, so there exists $\delta > 0$ such that $B_d(x_0,\delta) \subseteq C^c$.  If $x^* \in B_d(x_0,\delta)$, then there exists a subsequence $(f_{n_k})$ with $f_{n_k}(x^*) < 1/k$ for each $k \in \mathbb{N}$.  Then $\{x \in \mathbb{R}^d: f_{n_k}(x) \geq 1/k\}$ is a closed, convex set not containing $x^*$, so there exist $b_k \in \mathbb{R}^d$ with $\|b_k\| = 1$ such that $\{x \in \mathbb{R}^d: b_k^T x \leq b_k^T x^*\} \subseteq \{x \in \mathbb{R}^d: f_{n_k}(x) < 1/k\}$.  We can find a subsequence $(b_{k(l)})$, as well as $b_{x^*} \in \mathbb{R}^d$ with $\|b_{x^*}\| = 1$, such that $b_{k(l)} \rightarrow b_{x^*}$.  For any $R \in \mathbb{N}$ and $\eta > 0$, let $A_{R,\eta} := \{x: b_{x^*}^Tx < b_{x^*}^T x^* - \eta, \|x\| < R\}$.  Let $l_0 \in \mathbb{N}$ be large enough that $\|b_{k(l)} - b_{x^*}\| \leq \eta/(2R)$ for $l \geq l_0$.  Then we have for $l \geq l_0$, $R > \|x^*\|$ and $x \in A_{R,\eta}$  that
\[
b_{k(l)}^T(x - x^*) = b_{x^*}^T(x-x^*) + (b_{k(l)} - b_{x^*})^T(x-x^*) < -\eta + \frac{\eta}{2R}(\|x\| + \|x^*\|) < 0.
\]
Hence for $R > \|x^*\|$, we have $f_{n_{k(l)}}(x) < 1/k(l)$ for all $x \in A_{R,\eta}$ and $l \geq l_0$.  Since $A_{R,\eta}$ is open, we have for all $R > \|x^*\|$ and $\eta > 0$ that 
\[
\nu(A_{R,\eta}) \leq \liminf_{l \rightarrow \infty} \nu_{n_{k(l)}}(A_{R,\eta}) = \liminf_{l \rightarrow \infty} \int_{A_{R,\eta}} f_{n_{k(l)}} \, d\mu_d \leq \liminf_{l \rightarrow \infty} \frac{\mu_d(A_{R,\eta})}{k(l)} = 0. 
\]
Since the sets $A_{R,\eta}$ are increasing in $R$, we deduce that $\nu(A_{R,\eta}) = 0$ for all $R \in \mathbb{N}$ and all $\eta > 0$, so 
\[
\nu(\{x: b_{x^*}^Tx < b_{x^*}^T x^*\}) = \nu\biggl(\bigcup_{R=1}^\infty A_{R,1/R}\biggr) = \lim_{R \rightarrow \infty} \nu(A_{R,1/R}) = 0.
\]
This shows that no $x^* \in  B_d(x_0,\delta)$ belongs to $\mathrm{int}\bigl(\mathrm{csupp}(\nu)\bigr)$, so $x_0 \notin \mathrm{csupp}(\nu)$.  We conclude that if $\dim\bigl(\mathrm{csupp}(\nu)\bigr) = d$, then $\mathrm{csupp}(\nu) \subseteq \bar{C}$.

Now suppose that $\dim(C) = d$.  To show that $\bar{C} \subseteq \mathrm{csupp}(\nu)$, it suffices (since $\mathrm{csupp}(\nu)$ is closed) to prove that $C \subseteq \mathrm{csupp}(\nu)$.  Suppose, for a contradiction, that $x_0 \in C \setminus \mathrm{csupp}(\nu)$.  Then there exists $\delta > 0$ such that $B_d(x_0,\delta) \cap \mathrm{csupp}(\nu) = \emptyset$.  Since $\dim(C) = d$, we can find $\epsilon > 0$, $n_0 \in \mathbb{N}$ and $x_1,\ldots,x_d \in B_d(x_0,\delta)$ such that $x_0, x_1,\ldots, x_d$ are affinely independent, and $f_n(x_j) \geq \epsilon$ for $j=0,1,\ldots,d$ and $n \geq n_0$.  We deduce that for $n \geq n_0$,  we have $f_n(x) \geq \epsilon$ for $x \in \mathrm{conv}(\{x_0,x_1,\ldots,x_d\})$.  But then
\begin{align*}
\nu\bigl(\mathrm{conv}(\{x_0,x_1,\ldots,x_d\})\bigr) &\geq \limsup_{n \rightarrow \infty} \nu_n\bigl(\mathrm{conv}(\{x_0,x_1,\ldots,x_d\})\bigr) \\
&\geq \liminf_{n \rightarrow \infty} \nu_n\bigl(\mathrm{conv}(\{x_0,x_1,\ldots,x_d\})\bigr) \\
&\geq \epsilon \mu_d\bigl(\mathrm{conv}(\{x_0,x_1,\ldots,x_d\})\bigr) > 0.
\end{align*}
This contradicts $B_d(x_0,\delta) \cap \mathrm{csupp}(\nu) = \emptyset$, and we conclude that if $\dim(C) = d$, then $\bar{C} \subseteq \mathrm{csupp}(\nu)$.

Thus, if $\dim\bigl(\mathrm{csupp}(\nu)\bigr) = d$, then $\mathrm{csupp}(\nu) \subseteq \bar{C}$, so $\dim(C) = d$, so $\bar{C} \subseteq \mathrm{csupp}(\nu)$, and it follows that $\mathrm{csupp}(\nu) = \bar{C}$.  Moreover, we can reach the same conclusion starting from the hypothesis that $\dim(C) = d$.

Now suppose that $\dim(C) = d$.  To show that $\nu$ is absolutely continuous with respect to $\mu_d$, for $t \in \mathbb{R}$, let $U_{n,t} := \{x \in \mathbb{R}^d:\log f_n(x) \geq t\}$.  We can find $\epsilon \in (0,1)$ and $n_0 \in \mathbb{N}$ such that $\mu_d(U_{n,\log \epsilon}) \geq \epsilon$, for all $n \geq n_0$.  We first want to deduce that $\sup_{x \in \mathbb{R}^d} \sup_{n \in \mathbb{N}} f_n(x) < \infty$.  To this end, let $M_n := \sup_{x \in \mathbb{R}^d} \log f_n(x)$, and suppose, without loss of generality since $f_n$ is upper semi-continuous, that $\log f_n(x_{0,n}) = M_n$.  Assume for now that $M_n \geq \max\{\log(1/\epsilon),4d^2\}$, so for $x \in U_{n,\log \epsilon}$, we have
\[
\log f_n\biggl(x_{0,n} + \frac{x - x_{0,n}}{M_n - \log \epsilon}\biggr) \geq \biggl(\frac{1}{M_n - \log \epsilon}\biggr)\log \epsilon + \biggl(\frac{M_n-1 - \log \epsilon}{M_n - \log \epsilon}\biggr) M_n = M_n - 1.
\]
Thus $\mu_d(U_{n,\log \epsilon}) \leq (M_n - \log \epsilon)^d\mu_d(U_{n,M_n-1}) \leq (2M_n)^d\mu_d(U_{n,M_n-1})$.  But 
\[
1 = \int_{\mathbb{R}^d} f_n \geq e^{M_n - 1}\mu_d(U_{n,M_n-1}),
\]
so 
\[
\epsilon \leq \mu_d(U_{n,\log \epsilon}) \leq (2M_n)^d e^{-(M_n - 1)} \leq e^{-\bigl(\frac{M_n}{2} - 1\bigr)}.
\]
We deduce that $M_n \leq 2 + 2\log(1/\epsilon)$.  Thus, removing the initial assumption on $M_n$, we find that $M_n \leq \max\bigl\{2 + 2\log(1/\epsilon),4d^2\bigr\} =:M$, say.  Now, given $\eta > 0$, choose $\delta = \frac{\eta}{2e^M}$.  If $A$ is a Borel subset of $\mathbb{R}^d$ with $\mu_d(A) \leq \delta$, then since $\mu_d$ is regular, we can find an open set $A' \supseteq A$ in $\mathbb{R}^d$ with $\mu_d(A') \leq 2\delta$.  But then
\[
\nu(A) \leq \nu(A') \leq \liminf_{n \rightarrow \infty} \nu_n(A') = \liminf_{n \rightarrow \infty} \int_{A'} f_n \, d\mu_d \leq 2\delta e^M = \eta.
\]
It follows that $\nu$ is absolutely continuous with respect to $\mu_d$, so by the Radon--Nikodym theorem, we can let $f$ denote the Radon--Nikodym derivative of $\nu$ with respect to $\mu_d$.  The fact that $f = \mathrm{cl}(\liminf f_n)$ then follows from the proof of Proposition~2(a) of \citet{CuleSamworth2010}.

\end{proof}

\begin{proof}[Proof of Proposition~\ref{Prop:Conv2}]
1. Now suppose that $\dim(C) = d-1$, so $\dim\bigl(\mathrm{csupp}(\nu)\bigr) \leq d-1$.  
Let $S$ be a compact subset of $\mathbb{R}^d$ not intersecting $\mathrm{aff}(C)$, and suppose for a contradiction that there exist $\epsilon > 0$, a subsequence $(f_{n_k})$ and a sequence $(x_k) \in S$ with $f_{n_k}(x_k) \geq \epsilon$.  Since $S$ is compact, there exists a subsequence $(x_{k(l)})$ and $x_0 \in S$ such that $x_{k(l)} \rightarrow x_0$.  Moreover, we can find affinely independent points $x_1^*,\ldots,x_d^* \in C$, and by reducing $\epsilon > 0$ if necessary, we may assume $f_{n_k}(x_j^*) \geq \epsilon$ for $j=1,\ldots,d$ and large $k$.  Let $c := \mu_d\bigl(\mathrm{conv}(\{x_0,x_1^*,\ldots,x_d^*\})\bigr)$, so $c > 0$.  Let $b \in \mathbb{R}^d$ and $\beta \in \mathbb{R}$ be such that $\mathrm{csupp}(\nu) \subseteq \{x:b^Tx = \beta\}$, so without loss of generality, we may assume $\mu_d\bigl(\mathrm{conv}(\{x_0,x_1^*,\ldots,x_d^*\}) \cap \{x:b^Tx < \beta\}\bigr) \geq c/2$.  It follows that we can find a closed set $\bar{B} \subseteq \mathrm{conv}(\{x_0,x_1^*,\ldots,x_d^*\}) \cap \{x:b^Tx < \beta\}$ such that $\bar{B} \subseteq \mathrm{conv}(\{x_{k(l)},x_1^*,\ldots,x_d^*\})$ for large $l$, and $\mu_d(\bar{B}) \geq c/4$.  But then
\[
\nu(\bar{B}) \geq \limsup_{l \rightarrow \infty} \nu_{n_{k(l)}}(\bar{B}) \geq \epsilon c/4 > 0,
\]
contradicting $\bar{B} \cap \mathrm{csupp}(\nu) = \emptyset$.  We deduce that $\sup_{x \in S} f_n(x) \rightarrow 0$ as $n \rightarrow \infty$.

We now wish to deduce that if $\dim(C) = d-1$, then $\mathrm{csupp}(\nu) \subseteq \mathrm{aff}(C)$.  Suppose for a contradiction that $x_0 \in \mathrm{csupp}(\nu) \setminus \mathrm{aff}(C)$.  Let $H$ be a closed halfspace with $x_0 \in \mathrm{int}(H)$ but $H \cap \mathrm{aff}(C) = \emptyset$, and let $H_R = H \cap \bar{B}_d(0,R)$.  Then by the argument in the previous paragraph, given $\epsilon > 0$, there exists $n_0 \in \mathbb{N}$ such that $f_n(x) \leq \epsilon$ for all $x \in H_R$ and $n \geq n_0$.  It follows that 
\[
\nu\bigl(\mathrm{int}(H_R)\bigr) \leq \liminf_{n \rightarrow \infty} \nu_n\bigl(\mathrm{int}(H_R)\bigr) \leq \epsilon \mu_d\bigl(\mathrm{int}(H_R)\bigr),
\]
so $\nu\bigl(\mathrm{int}(H_R)\bigr) = 0$.  We deduce that $\nu\bigl(\mathrm{int}(H)\bigr) = \lim_{R \rightarrow \infty} \nu\bigl(\mathrm{int}(H_R)\bigr) = 0$, contradicting the hypothesis that $x_0 \in \mathrm{csupp}(\nu)$.  Thus $\mathrm{csupp}(\nu) \subseteq \mathrm{aff}(C)$.

2. Note that $f_{n,U} \in \mathcal{F}_{k,U+a}$, by Theorem~6 of \citet{Prekopa1973}.  If $\nu_{n,U}$ denotes the probability measure corresponding to $f_{n,U}$, then by the Cram\'er--Wold device, $\nu_{n,U} \stackrel{d}{\rightarrow} \nu$.  It follows by Proposition~\ref{Prop:Conv} that $\nu$ is absolutely continuous with respect to $\mu_{k,U+a}$, with Radon--Nikodym derivative $\mathrm{cl}(\liminf f_{n,U}) \in \mathcal{F}_{k,U+a}$.
\end{proof}

\begin{proof}[Proof of Proposition~\ref{Prop:Conv3}]
If $\theta_0 \in U$ and $\theta_1 \in U^\perp$, then 
\[
\int_{\mathbb{R}^d} e^{\theta_0^Tx + \theta_1^T x} \, d\nu(x) = \int_{U+a} e^{\theta_0^Tx + \theta_1^T x} \, d\nu(x) = e^{\theta_1^T a} \int_{U+a} e^{\theta_0^Tx} \, d\nu(x),
\]
so $\Theta = \Theta_0 \oplus U^\perp$, where $\Theta_0$ contains 0.  The fact that $\Theta_0$ is convex follows immediately from the convexity of the exponential function, while the fact that $\Theta_0$ is relatively open follows from the proof of Proposition~2.2 of \citet{SHD2011}, once we note from Part 2 of Proposition~\ref{Prop:Conv2} that $\nu$ has a log-concave Radon--Nikodym derivative with respect to $\mu_{k,U+a}$.

Now fix $\theta \in \Theta$, and let $X_n \sim \nu_n$ and $X \sim \nu$.  By Theorem~6 of \citet{Prekopa1973}, $\theta^T X_n$ has a log-concave density, and by the Cram\'er--Wold device, $\theta^T X_n \stackrel{d}{\rightarrow} \theta^T X$.  Letting $\nu_\theta$ denote the distribution of $\theta^T X$, we consider separately the cases $\dim\bigl(\mathrm{csupp}(\nu_\theta)\bigr) = 1$ and $\dim\bigl(\mathrm{csupp}(\nu_\theta)\bigr) = 0$.  If $\dim\bigl(\mathrm{csupp}(\nu_\theta)\bigr) = 1$, then by Proposition~\ref{Prop:Conv}, $\nu_\theta$ admits an upper semi-continuous, log-concave Radon--Nikodym derivative $f_\theta$, say, with respect to $\mu_1$, and
\[
\int_{-\infty}^\infty e^t f_\theta(t) \, dt = \int_{\mathbb{R}^d} e^{\theta^T x} \, d\nu(x) < \infty.
\]
Letting $f_{n,\theta}(t) := \mathrm{cl}\bigl(\int_{x:\theta^T x = t} f_n(x) \, dx\bigr)$, and noting that $f_{n,\theta} \in \mathcal{F}_1$, we deduce that
\[
\biggl|\int_{\mathbb{R}^d} e^{\theta^T x} \, d\nu_n(x) - \int_{\mathbb{R}^d} e^{\theta^T x} \, d\nu(x)\biggr| \leq \int_{-\infty}^\infty e^t |f_{n,\theta}(t) - f_\theta(t)| \, dt \rightarrow 0,
\]
where the convergence follows from Proposition~2.2 and Theorem~2.1 of \citet{SHD2011}.

Finally, suppose that $\dim\bigl(\mathrm{csupp}(\nu_\theta)\bigr) = 0$, so that $\theta \in U^\perp$, and $\theta^TX_n \stackrel{d}{\rightarrow} \delta_a$, where $\delta_a$ denotes a Dirac point mass at $a$.  Letting $f_{n,\theta}(t) =  \mathrm{cl}\bigl(\int_{x:\theta^T x = t} f_n(x) \, dx\bigr)$ as before, we note that given $\epsilon \in \bigl(0,\frac{\log 2}{20}\bigr)$, we can find $n_0 \in \mathbb{N}$ such that $\int_{a - \epsilon}^{a + \epsilon} f_{n,\theta}(t) \, dt \geq 1/2$ for all $n \geq n_0$.  In particular, for $n \geq n_0$, there exists $t_n \in (a - \epsilon,a+\epsilon)$ such that $f_{n,\theta}(t_n) \geq 1/(4\epsilon)$.  We may also assume that for each $n \geq n_0$ there exists $t_{1,n} \in [a + \epsilon,a + 9\epsilon]$ such that $f_{n,\theta}(t_{1,n}) \leq 1/(8\epsilon)$.  We deduce that for $n \geq n_0$ and $t \geq t_{1,n}$, 
\[
f_{n,\theta}(t) \leq \exp\biggl\{\Bigl(\frac{t - t_n}{t_{1,n} - t_n}\Bigr)\log \frac{1}{8\epsilon} + \Bigl(\frac{t_{1,n} - t}{t_{1,n} - t_n}\Bigr)\log \frac{1}{4\epsilon}\biggr\}
\]
It follows that for $K \geq \max\{2(a+\epsilon),a+9\epsilon\}$, 
\begin{align*}
\sup_{n \geq n_0} \int_{x:\theta^T x \geq K} &e^{\theta^T x} f_n(x) \, dx = \sup_{n \geq n_0} \int_K^\infty e^t f_{n,\theta}(t) \, dt \\
&\leq \sup_{n \geq n_0} \int_K^\infty e^t\exp\biggl\{\biggl(\frac{t - t_n}{t_{1,n} - t_n}\biggr)\log \frac{1}{8\epsilon} + \biggl(\frac{t_{1,n} - t}{t_{1,n} - t_n}\biggr)\log \frac{1}{4\epsilon}\biggr\} \, dt \\
&=\sup_{n \geq n_0} \frac{(t_{1,n} - t_n)e^K}{\log 2 - (t_{1,n} - t_n)}\exp\biggl\{\biggl(\frac{K - t_n}{t_{1,n} - t_n}\biggr)\log \frac{1}{8\epsilon} + \biggl(\frac{t_{1,n} - K}{t_{1,n} - t_n}\biggr)\log \frac{1}{4\epsilon}\biggr\} \\
&\leq \frac{5}{2(\log 2 - 10\epsilon)}e^{-K (\frac{\log 2}{20\epsilon} - 1)} \rightarrow 0 
\end{align*}
as $K \rightarrow \infty$.  We deduce that the sequence $(e^{\theta^T X_n})$ is uniformly integrable, so the result follows by Theorem~A on p.14 of \citet{Serfling1980}.
\end{proof}

\begin{proof}[Proof of Theorem~\ref{Thm:IntEnv}]
(a) Suppose for a contradiction that there exist sequences $(f_n) \in \mathcal{F}_d^{0,I}$ and $(a_n) \searrow 0$ such that $\sup_{x \in \mathbb{R}^d} e^{a_n\|x\|} f_n(x) \geq n$ for all $n \in \mathbb{N}$.  Note that for $R > 0$,
\[
\sup_{n \in \mathbb{N}} \int_{\|x\| > R} f_n(x) \, dx \leq \sup_{n \in \mathbb{N}} \frac{1}{R^2} \int_{\|x\| > R} \|x\|^2 f_n(x) \, dx \leq \frac{d}{R^2} \rightarrow 0
\]
as $R \rightarrow \infty$.  We conclude that the sequence of probability measures $(\nu_n)$ defined by $(f_n)$ is tight, so by Prohorov's theorem, we can find $1 \leq n_1 \leq n_2 \leq \ldots$ and a probability measure $\nu$ on $\mathbb{R}^d$ such that $\nu_{n_k} \stackrel{d}{\rightarrow} \nu$.  If $\Sigma$ denotes the covariance matrix corresponding to $\nu$, then by the remark following Proposition~\ref{Prop:Conv3}, we have $\Sigma = I$.  In particular, $\dim\bigl(\mathrm{csupp}(\nu)\bigr) = d$.  It follows by Proposition~\ref{Prop:Conv} that $\nu$ has a log-concave Radon--Nikodym derivative $f := \mathrm{cl}(\liminf f_{n_k})$ with respect to $\mu_d$.  Pick $x_0 \in \mathrm{int}(\mathrm{dom}(f))$ and $\delta \in (0,1)$ such that $\bar{B}_d(x_0,\delta) \subseteq \mathrm{int}(\mathrm{dom}(f))$.  Since $f_{n_k} \rightarrow f$ uniformly on compact subsets of $\mathrm{int}(\mathrm{dom}(f))$, there exists $k_0 \in \mathbb{N}$ such that $|f_{n_k}(x) - f(x)| < f(x_0)/4$ for all $k \geq k_0$ and all $x \in \bar{B}_d(x_0,\delta)$.  Moreover, by reducing $\delta >0$ if necessary, we may assume that $|f(x) - f(x_0)| < f(x_0)/4$ for all $x \in \bar{B}_d(x_0,\delta)$.  In particular, this means that $f_{n_k}(x) \geq f(x_0)/2$ for all $k \geq k_0$ and all $x \in \bar{B}_d(x_0,\delta)$.

We now claim that there exists $R_0 > 2(\|x_0\|+1)$ such that $f_{n_k}(x) < f(x_0)/4$ for $\|x\| \geq R_0$ and $k \geq k_0$.  To see this, suppose for a contradiction that there exist an $\mathbb{R}^d$-valued sequence $(x_m)$ with $\|x_m\| \rightarrow \infty$ and a sequence of positive integers $(k_m)$ with $k_m \geq k_0$ such that 
\[
f_{n_{k(m)}}(x_m) \geq \frac{f(x_0)}{4}
\]
for all $m$.  Then, since the level sets of each $f_n$ are convex, for each $m$,
\[
\mu_d\bigl(\{x:f_{n_{k(m)}}(x) \geq f(x_0)/4\}\bigr) \geq \mu_d\Bigl(\mathrm{conv}\bigl(\bar{B}_d(x_0,\delta) \cup \{x_m\}\bigr)\Bigr) \rightarrow \infty
\]
as $m \rightarrow \infty$.  This contradicts the fact that each $f_n$ is a density, and establishes our claim. 

But now, if $k \geq k_0$ and $x \in \bar{B}_d(0,R_0) \setminus \bar{B}_d(x_0,\delta)$, then we can set   
\[
x_{1,k} = \biggl(\frac{\|x - x_0\|-\delta/2}{\|x - x_0\|}\biggr)x_0 + \biggl(\frac{\delta/2}{\|x - x_0\|}\biggr)x. 
\]
Observe that $\|x_{1,k} - x_0\| = \delta/2$.  Thus, for all $k \geq k_0$,
\begin{align*}
\log f_{n_k}(x) &\leq \biggl(\frac{2\|x - x_0\|}{\delta}\biggr)\bigl\{\log f_{n_k}(x_{1,k}) - \log f_{n_k}(x_0)\bigr\} + \log f_{n_k}(x_0) \\
&\leq \frac{4R_0}{\delta}\log 2 + \log \Bigl(\frac{5f(x_0)}{4}\Bigr).
\end{align*}
Now, for $\|x\| > R_0$, we can find $x_{2,k} \in \bar{B}_d(0,R_0) \setminus B_d(0,R_0)$ and $\lambda \in (0,1)$ such that $x_{2,k} = \lambda x_0 + (1-\lambda)x$.  Notice that 
\[
R_0 = \|x_{2,k}\| \geq (1-\lambda)\|x\| - \lambda \|x_0\| \geq (1-\lambda)\|x\| - \lambda \frac{R_0}{2},
\] 
so $\lambda \geq 2(\|x\| - R_0)/(2\|x\| + R_0)$.  It follows that for $k \geq k_0$, 
\begin{align*}
\log f_{n_k}(x) &\leq \frac{1}{1-\lambda}\{\log f_{n_k}(x_{2,k}) - \log f_{n_k}(x_0)\} + \log f_{n_k}(x_0) \\
&\leq - \biggl(\frac{2\|x\| + R_0}{3R_0}\biggr)\log 3 + \log \Bigl(\frac{5f(x_0)}{4}\Bigr).
\end{align*}
We conclude that there exist $A_{0,d}, B_{0,d} > 0$ such that $f_{n_k}(x) \leq e^{-A_{0,d}\|x\| + B_{0,d}}$ for all $k \geq k_0$ and all $x \in \mathbb{R}^d$, contradicting our original hypothesis, and therefore proving our claim.

(b) Suppose for a contradiction that there exists $x_0 \in \mathbb{R}^d$ with $\|x_0\| \leq 1/4$ and a sequence $(f_n) \in \mathcal{F}_d^{0,I}$ such that $f_n(x_0) \searrow 0$ as $n \rightarrow \infty$.  As in the proof of part (a), the sequence $(\nu_n)$ of corresponding probability measures is tight, so by Prohorov's theorem, there exists a subsequence $(\nu_{n_k})$ and a probability measure $\nu$ on $\mathbb{R}^d$ such that $\nu_{n_k} \stackrel{d}{\rightarrow} \nu$.  The upper semi-continuous version of the probability density $f$ corresponding to $\nu$ belongs to $\mathcal{F}_d^{0,I}$, so letting $C = \mathrm{dom}(\log f)$, we have $0 \in \mathrm{int}(C)$.  Note further that since $(f_{n_k})$ converges to $f$ pointwise on $\mathrm{int}(C)$, we must have that $x_0 \notin \mathrm{int}(C)$ and $cx_0 \in \mathrm{bd}(C)$ for some $c \in (0,1]$.  Now let  
\[
x_* \in \argmin_{x \in \mathrm{bd}(C)} \|x\|,
\]
so $0 < \|x_*\| \leq \|x_0\|$.  Without loss of generality, we may assume $x_* = (\|x_*\|,0,\ldots,0)^T$.  By the supporting hyperplane theorem \citep[][Theorem~11.6]{Rockafellar1997}, there exists $b = (b_1,\ldots,b_d)^T \in \mathbb{R}^d$ with $\|b\|=1$ such that $C \subseteq \{x:b^Tx \leq b^Tx_*\}$.  If $b \neq e_1$, where $e_1$ denotes the first standard basis vector in $\mathbb{R}^d$, then $b^Tx_* < \|x_*\|$ and  there exists $c \in (0,1)$ such that $x_{**} := c\|x_*\|b \in \mathrm{bd}(C)$.  But then $\|x_{**}\| < \|x_*\|$, a contradiction, so $b = e_1$, and $x_1 \leq \|x_*\|$ for all $x = (x_1,\ldots,x_d)^T \in C$.  Letting $f_1^*(x_1) := \mathrm{cl}\bigl(\int_{\mathbb{R}^{d-1}} f(x_1,\ldots,x_d) \, dx_2\ldots dx_d\bigr)$, we then have that $f_1^* \in \mathcal{F}_1^{0,1}$ and $f_1^*(x_1) = 0$ for all $x_1 > \|x_*\|$.

Our claim is that this forces $\|x_*\| > 1/4$.  To see this, let $a := \|x_*\|$, let $m \in [0,a]$ be such that $f_1^*(m) = \max_{x_1 \in [0,a]} f_1^*(x_1) =: M$ and let $\phi_1^* := \log f_1^*$.  Note that  
\[
\frac{Ma^2}{2} \geq \int_0^a uf_1^*(u) \, du \geq \biggl|\int_{-2a}^0 u f_1^*(u) \, du\biggr| \geq 2a^2\inf_{u \in [-2a,0]} f_1^*(u).
\]
Hence $\inf_{u \in [-2a,0]} f_1^*(u) \leq M/4$, and in fact this infimum must be attained when $u = -2a$, so $f_1^*(-2a) \leq M/4$.  Now observe that
\begin{align}
\label{Eq:density}
1 \geq \int_{-2a}^m f_1^* \geq \int_{-2a}^m \exp\biggl\{\frac{u+2a}{m+2a} \phi_1^*(m) + \frac{m-u}{m+2a} \phi_1^*(-2a)\biggr\} \, du &= \frac{(m+2a)\{M-f_1^*(-2a)\}}{\log M - \phi_1^*(-2a)} \nonumber \\
&\geq \frac{3af_1^*(-2a)}{\log 2}.
\end{align}
On the other hand,
\begin{align*}
\int_{-\infty}^{-2a} u^2 &f_1^*(u) \, du \leq \int_{-\infty}^{-2a} u^2 \exp\biggl\{\frac{u+2a}{m+2a} \phi_1^*(m) + \frac{m-u}{m+2a} \phi_1^*(-2a)\biggr\} \, du \\
&= \frac{(m+2a)f_1^*(-2a)}{\log M - \phi_1^*(-2a)}\biggl[\frac{2(m+2a)^2}{\{\log M - \phi_1^*(-2a)\}^2} + \frac{4a(m+2a)}{\log M - \phi_1^*(-2a)} + 4a^2\biggr] < 12a^2.
\end{align*}
Here, we used~(\ref{Eq:density}), as well as $m \leq a$ and $\log M - \phi_1^*(-2a) \geq 2 \log 2$  to obtain the final inequality.  We deduce that
\[
1 = \int_{-\infty}^a u^2 f_1^*(u) \, du < 16a^2,
\]
so $a > 1/4$, as required.
\end{proof}
\begin{proof}[Proof of Corollary~\ref{Cor:Transform}]
(a) Let $\tilde{f} \in \tilde{\mathcal{F}}_d^{\xi,\eta}$.  Then, writing $f(x) := |\det \Sigma_{\tilde{f}}|^{1/2}\tilde{f}(\Sigma_{\tilde{f}}^{1/2}x + \mu_{\tilde{f}})$, we have that $f \in \mathcal{F}_d^{0,I}$.  Thus, by Theorem~\ref{Thm:IntEnv}(a), there exist $A_{0,d}, B_{0,d} > 0$ such that
\[
f(x) \leq e^{-A_{0,d}\|x\| + B_{0,d}}
\]
for all $x \in \mathbb{R}^d$.  We deduce that, for all $x \in \mathbb{R}^d$,
\begin{align*}
\tilde{f}(x) &= |\det \Sigma_{\tilde{f}}|^{-1/2}f\bigl(\Sigma_{\tilde{f}}^{-1/2}(x-\mu_{\tilde{f}})\bigr) \leq (1-\eta)^{-d/2}\exp\biggl\{-\frac{A_{0,d}\bigl|\|x\|-\|\mu_{\tilde{f}}\|\bigr|}{(1+\eta)^{1/2}} + B_{0,d}\biggr\} \\
&\leq (1-\eta)^{-d/2}\exp\biggl\{-\frac{A_{0,d}\|x\|}{(1+\eta)^{1/2}} + \frac{A_{0,d}\xi}{(1+\eta)^{1/2}} + B_{0,d}\biggr\}.
\end{align*}

(b) Suppose $(\tilde{f}_n) \in \tilde{\mathcal{F}}_d^{\xi,\eta}$ and $x_0 \in \mathbb{R}^d$ are such that $\tilde{f}_n(x_0) \searrow 0$.  For any $R > 0$,
\[
\sup_{n \in \mathbb{N}} \int_{\|x\| > R} \tilde{f}_n(x) \, dx \leq \sup_{n \in \mathbb{N}} \frac{1}{R^2} \int_{\|x\| > R} \|x\|^2 \tilde{f}_n(x) \, dx \leq \frac{2d(1+\eta) + 2\xi^2}{R^2} \rightarrow 0
\]
as $R \rightarrow \infty$, so the sequence of probability measures corresponding to $(\tilde{f}_n)$ is tight.  By Prohorov's theorem, we assert the existence of $\tilde{f} \in \tilde{\mathcal{F}}_d^{\xi,\eta}$ such that $x_0 \notin \mathrm{int}(C)$, where $C := \mathrm{dom}(\log \tilde{f})$.  But then, writing $f(x) := |\det \Sigma_{\tilde{f}}|^{1/2}\tilde{f}(\Sigma_{\tilde{f}}^{1/2}x + \mu_{\tilde{f}})$, we have that $f \in \mathcal{F}_d^{0,I}$, so by Theorem~\ref{Thm:IntEnv}(b), we must have 
\[
\frac{1}{16} < \|\Sigma_{\tilde{f}}^{-1/2}(x_0 - \mu_{\tilde{f}})\|^2 \leq \frac{(\|x_0\| + \xi)^2}{1-\eta}.
\] 
It follows that $\|x_0\| > (1-\eta)^{1/2}/4 - \xi$, as required.
\end{proof}
\begin{proof}[Proof of Proposition~\ref{Prop:Env2}]
First note that for $x_0 > 0$, the density $f(x) = \frac{1}{x_0}e^{-(x_0-x)/x_0}\mathbbm{1}_{\{x \leq x_0\}}$ belongs to $\mathcal{F}_1^0$ and satisfies $f(x_0) = 1/x_0$.  Similarly, for $x_0 < 0$, the density $f(x) = \frac{1}{|x_0|}e^{-(x-x_0)/x_0}\mathbbm{1}_{\{x \geq x_0\}}$ belongs to $\mathcal{F}_1^0$ and satisfies $f(x_0) = 1/|x_0|$.  We also observe that the sequence of densities $f_n(x) = \frac{n}{2}e^{-n|x|}$ belongs to $\mathcal{F}_1^0$ and satisfies $f_n(0) = \frac{n}{2} \rightarrow \infty$ as $n \rightarrow \infty$.  

Now let $x_0 > 0$ and suppose, for a contradiction, that $f^* \in \mathcal{F}_1^0$ satisfies $f^*(x_0) > 1/x_0$.  We must have $f^*(0) < f^*(x_0)$ (otherwise $\int_0^{x_0} f^* > 1$), so writing $\phi^* := \log f^*$, we have that 
\begin{align*}
\frac{f^*(0)x_0^2}{\{\phi^*(x_0) - \phi^*(0)\}^2} &= -\int_{-\infty}^0 x \exp\biggl\{\frac{x}{x_0}\phi^*(x_0) + \frac{(x_0 - x)}{x_0}\phi^*(0)\biggr\} \, dx \\
&\geq -\int_{-\infty}^0 xf^*(x) \, dx \geq \int_0^{x_0} xf^*(x) \, dx \\
&\geq \int_0^{x_0} x\exp\biggl\{\frac{x}{x_0}\phi^*(x_0) + \frac{(x_0 - x)}{x_0}\phi^*(0)\biggr\} \, dx \\
&= \frac{[f^*(0) + f^*(x_0)\{\phi^*(x_0) - \phi^*(0) - 1\}]x_0^2}{\{\phi^*(x_0) - \phi^*(0)\}^2}.
\end{align*}
We deduce that $\phi^*(0) \geq \phi^*(x_0) - 1$.  It follows that there exists $x^* \in (-\infty,0]$ such that $f^*(x) < \frac{1}{x_0}e^{-(x_0-x)/x_0}$ for $x < x^*$, and $f^*(x) > \frac{1}{x_0}e^{-(x_0-x)/x_0}$ for $x^* < x \leq x_0$.  But then we have for every $x \leq x_0$ that
\[
F^*(x) := \int_{-\infty}^x f^*(t) \, dt \leq \int_{-\infty}^x \frac{1}{x_0}e^{-(x_0-t)/x_0} \, dt =: F(x),
\]
say, with strict inequality for every $x \leq x_0$ except possibly when $x = x_0$, since $F(x_0) = 1$.  We deduce that 
\begin{align*}
\int_{-\infty}^\infty x f^*(x) \, dx &\geq -\int_{-\infty}^0 F^*(x) \, dx + \int_0^{x_0} \{1 - F^*(x)\} \, dx \\
&> -\int_{-\infty}^0 F(x) \, dx + \int_0^{x_0} \{1 - F(x)\} \, dx = \int_{-\infty}^{x_0} \frac{x}{x_0}e^{-(x_0-x)/x_0} \, dx = 0, 
\end{align*}
a contradiction.  A similar argument handles the case $x_0 < 0$.
\end{proof}

\subsection{Proofs from Section~\ref{Sec:Bracketing}}
\label{Sec:ProofBracketingBounds}

\begin{proof}[Proof of Theorem~\ref{Thm:BracketingBounds}]
\emph{(i)} Let $\epsilon_{00} \in (0,e^{-1}]$.  Fix $\epsilon \in (0,\epsilon_{00}]$ and set $y_k := 2^{k/2}$ for $k=0,1,\ldots,k_0$, where $k_0 := \min\{k \in \mathbb{N}: y_k \geq \log (\epsilon_{00}/\epsilon)\}$.  Let $\Phi$ denote the class of upper semi-continuous, concave functions $\phi:[0,1]^d \rightarrow [-\infty,-y_0]$, and let $\mathcal{D}$ denote the class of closed, convex subsets $D$ of $[0,1]^d$.  For $D \in \mathcal{D}$, let $\Phi_{y_0}(D) = \emptyset$ and for $k=1,\ldots,k_0$, define inductively
\[
\Phi_{y_k}(D) := \{\phi \in \Phi: \mathrm{dom}(\phi) = D \text{ and } \phi(x) \geq -y_k  \text{ for all } x \in D\}.
\]
Now let $\mathcal{F}_{y_k}(\mathcal{D}) := \{e^\phi: \phi \in \cup_{D \in \mathcal{D}} \Phi_{y_k}(D)\}$.
Write
\[
K_{1,k}^* := \biggl(1 + 5\sum_{j=1}^k e^{-y_{j-1}}\biggr)^{1/2} 
\]
and 
\begin{align*}
K_{2,k,1}^* &:= \sum_{j=1}^k \{e^{-y_{j-1}/2}K_1 + 8e^{-y_{j-1}/4} +  K_1^\circ y_j^{1/2}e^{-y_{j-1}/4}\}, \\
K_{2,k,2}^* &:= \sum_{j=1}^k \{K_2 e^{-y_{j-1}/2} + K_2^\circ y_j e^{-y_{j-1}/2}\}, \\
K_{2,k,3}^* &:= \sum_{j=1}^k \{K_3 e^{-y_{j-1}} + K_3^\circ y_j^2 e^{-y_{j-1}}\},
\end{align*}
where $K_d$ and $K_d^{\circ}$ are the constants defined in Propositions~\ref{Prop:Dudley} and~\ref{Prop:BoundedBrackets} below respectively.  Let 
\[
h_d(\epsilon) := \left\{ \begin{array}{ll} \epsilon^{-1/2} & \mbox{when $d=1$} \\
\epsilon^{-1}\log_{++}^{3/2}(1/\epsilon) & \mbox{when $d=2$} \\
\epsilon^{-2} & \mbox{when $d=3$.} \end{array} \right.
\]
We claim that for $k = 1,\ldots,k_0$ and $d = 1,2,3$, we have
\begin{equation}
\label{Eq:Claim}
\log N_{[]}(K_{1,k}^* \epsilon,\mathcal{F}_{y_k}(\mathcal{D}),L_2) \leq K_{2,k,d}^* h_d(\epsilon),
\end{equation}
and prove this by induction.  First consider the case $k = 1$.  By Proposition~\ref{Prop:Dudley}, we can find pairs of measurable subsets $\{(A_{j,1}^L,A_{j,1}^U):j=1,\ldots,N_{S,1,d}\}$ of $[0,1]^d$, where $N_{S,1,1} := \lfloor e^{K_1-y_0} \epsilon^{-2} \rfloor$ and $N_{S,1,d} := \lfloor \exp(K_d e^{-(d-1)y_0/2} \epsilon^{-(d-1)}) \rfloor$ for $d = 2,3$, with the properties that $L_1(\mathbbm{1}_{A_{j,1}^U},\mathbbm{1}_{A_{j,1}^L}) \leq \epsilon^2 e^{y_0}$ for $j=1,\ldots,N_{S,1,d}$ and, if $A$ is a closed, convex subset of $[0,1]^d$, then there exists $j^* \in \{1,\ldots,N_{S,1,d}\}$ such that $A_{j^*,1}^L \subseteq A \subseteq A_{j^*,1}^U$.  Note that by replacing $A_{j,1}^L$ with the closure of its convex hull if necessary, there is no loss of generality in assuming that each $A_{j,1}^L$ is closed and convex.  Moreover, by Proposition~\ref{Prop:BoundedBrackets} below, for each $j = 1,\ldots,N_{S,1,d}$ for which $A_{j,1}^L$ is $d$-dimensional, there exists a bracketing set $\{[\psi_{j,\ell,1}^L,\psi_{j,\ell,1}^U]:\ell=1,\ldots,N_{B,1,d}\}$ for $\Phi_{y_1}(A_{j,1}^L)$, where $N_{B,1,d}:= \lfloor \exp\{K_d^{\circ}h_d(\epsilon e^{y_0/2}/y_1)\}\rfloor$, such that $-y_1 \leq \psi_{j,\ell,1}^L \leq \psi_{j,\ell,1}^U \leq -y_0$, that $L_2(\psi_{j,\ell,1}^U,\psi_{j,\ell,1}^L) \leq 2\epsilon e^{y_0/2}$ and such that for every $\phi \in \Phi_{y_1}(A_{j,1}^L)$, we can find $\ell^* \in \{1,\ldots,N_{B,1,d}\}$ with $\psi_{j,\ell^*,1}^L \leq \phi \leq \psi_{j,\ell^*,1}^U$.  If $\dim(A_{j,1}^L) < d$, we define a trivial bracketing set $\{[\psi_{j,\ell,1}^L,\psi_{j,\ell,1}^U]:\ell=1,\ldots,N_{B,1,d}\}$ for $\Phi_{y_1}(A_{j,1}^L)$ by $\psi_{j,\ell,1}^L(x) := -y_1$ and $\psi_{j,\ell,1}^U(x) := -y_0$ for $x \in A_{j,1}^L$.  Note that whenever $\dim(A_{j,1}^L) < d$, we have $L_2(\psi_{j,\ell,1}^U,\psi_{j,\ell,1}^L) = 0$.  This enables us to define a bracketing set $\{[f_{j,\ell,1}^L,f_{j,\ell,1}^U]:j=1,\ldots,N_{S,1,d}, \ell=1,\ldots,N_{B,1,d}\}$ for $\mathcal{F}_{y_1}(\mathcal{D})$ by
\[
f_{j,\ell,1}^L(x) := e^{\psi_{j,\ell,1}^L(x)}\mathbbm{1}_{\{x \in A_{j,1}^L\}} \quad \text{and} \quad f_{j,\ell,1}^U(x) := e^{\psi_{j,\ell,1}^U(x)}\mathbbm{1}_{\{x \in A_{j,1}^L\}} + e^{-y_0}\mathbbm{1}_{\{x \in A_{j,1}^U \setminus A_{j,1}^L\}}
\]
for $x \in [0,1]^d$.  Note that
\begin{align*}
L_2^2(f_{j,\ell,1}^U,f_{j,\ell,1}^L) &= \int_{A_{j,1}^L} (e^{\psi_{j,\ell,1}^U} - e^{\psi_{j,\ell,1}^L})^2 \, d\mu_d + e^{-2y_0}\mu_d(A_{j,1}^U \setminus A_{j,1}^L) \\
&\leq e^{-2y_0} L_2^2(\psi_{j,\ell,1}^U,\psi_{j,\ell,1}^L) + e^{-2y_0}L_1(\mathbbm{1}_{A_{j,1}^U},\mathbbm{1}_{A_{j,1}^L}) \leq (K_{1,1}^*)^2\epsilon^2.
\end{align*}
Moreover, when $d=1$ the cardinality of this bracketing set is
\begin{align*}
N_{S,1,1}N_{B,1,1} &\leq e^{K_1-y_0} \epsilon^{-2}\exp\Bigl\{K_1^{\circ}h_1\Bigl(\frac{\epsilon e^{y_0/2}}{y_1}\Bigr)\Bigr\} \\
&\leq \exp\biggl\{e^{-y_0/2}K_1\epsilon^{-1/2} + 8e^{-y_0/4}\epsilon^{-1/2} + K_1^{\circ}h_1\Bigl(\frac{\epsilon e^{y_0/2}}{y_1}\Bigr)\biggr\} \leq e^{K_{2,1,1}^*\epsilon^{-1/2}},
\end{align*}
where we have used the facts that $e^{y_0/2}\epsilon^{1/2} \leq e^{y_{k_0-1}/2}\epsilon^{1/2} \leq \epsilon_{00}^{1/2} \leq 1$ and $2 e^{y_0/4}\epsilon^{1/2}\log(1/\epsilon) \leq 8 e^{y_{k_0-1}/4}\epsilon^{1/4} \leq 8 \epsilon_{00}^{1/4} \leq 8$.  When $d = 2$, the cardinality is
\[
N_{S,1,2}N_{B,1,2} \leq \exp\biggl\{K_2 e^{-y_0/2} \epsilon^{-1}+ K_2^{\circ}h_2\Bigl(\frac{\epsilon e^{y_0/2}}{y_1}\Bigr)\biggr\} \leq e^{K_{2,1,2}^*\epsilon^{-1}\log_{++}^{3/2}(1/\epsilon)}.
\]
Finally, when $d=3$, the cardinality of the bracketing set is
\[
N_{S,1,3}N_{B,1,3} \leq \exp\biggl\{K_3 e^{-y_0}\epsilon^{-2} + K_3^\circ h_3\Bigl(\frac{\epsilon e^{y_0/2}}{y_1}\Bigr)\biggr\} \leq e^{K_{2,1,3}^*\epsilon^{-2}}.
\]
This proves the claim~(\ref{Eq:Claim}) when $k=1$.  Now suppose the claim is true for some $k-1 < k_0-1$, so there exist brackets $\{[f_{j',k-1}^L,f_{j',k-1}^U]:j'=1,\ldots,N_{k-1,d}'\}$ for $\mathcal{F}_{y_{k-1}}(\mathcal{D})$, where $N_{k-1,d}' := \lfloor \exp\{K_{2,k-1,d}^*h_d(\epsilon)\}\rfloor$, such that $L_2(f_{j',k-1}^U,f_{j',k-1}^L) \leq K_{1,k-1}^*\epsilon$, and for every $f \in \mathcal{F}_{y_{k-1}}(\mathcal{D})$, there exists $(j')^* \in \{1,\ldots,N_{k-1,d}'\}$ such that $f_{(j')^*,k-1}^L \leq f \leq f_{(j')^*,k-1}^U$.  Let $A_{j',k-1}^U := \{x \in [0,1]^d: f_{j',k-1}^U(x) > 0\}$.  We use Proposition~\ref{Prop:Dudley} again to find pairs of measurable subsets $\{(A_{j,k}^L,A_{j,k}^U):j=1,\ldots,N_{S,k,d}\}$ of $[0,1]^d$, where $A_{j,k}^L$ is closed and convex and where $N_{S,k,1} := \lfloor e^{K_1-y_{k-1}} \epsilon^{-2} \rfloor$ and $N_{S,k,d} := \lfloor \exp(K_d e^{-y_{k-1}(d-1)/2} \epsilon^{-(d-1)}) \rfloor$ for $d = 2,3$, with the properties that $L_1(\mathbbm{1}_{A_{j,k}^U},\mathbbm{1}_{A_{j,k}^L}) \leq \epsilon^2 e^{y_{k-1}}$ for $j=1,\ldots,N_{S,k,d}$ and, if $A$ is a closed, convex subset of $[0,1]^d$, then there exists $j^* \in \{1,\ldots,N_{S,k,d}\}$ such that $A_{j^*,k}^L \subseteq A \subseteq A_{j^*,k}^U$.  Using Proposition~\ref{Prop:BoundedBrackets} below again, for each $j = 1,\ldots,N_{S,k,d}$ for which $\dim(A_{j,k}^L) = d$, there exists a bracketing set $\{[\psi_{j,\ell,k}^L,\psi_{j,\ell,k}^U]:\ell=1,\ldots,N_{B,k,d}\}$ for $\Phi_{y_k}(A_{j,k}^L)$, where $N_{B,k,d}:= \lfloor \exp\{K_d^{\circ}h_d(\frac{\epsilon e^{y_{k-1}/2}}{y_k})\}\rfloor$, such that $-y_k \leq \psi_{j,\ell,k}^L \leq \psi_{j,\ell,k}^U \leq -y_0$, that $L_2(\psi_{j,\ell,k}^U,\psi_{j,\ell,k}^L) \leq 2\epsilon e^{y_{k-1}/2}$ and that for every $\phi \in \Phi_{y_k}(A_{j,k}^L)$, we can find $\ell^* \in \{1,\ldots,N_{B,k,d}\}$ with $\psi_{j,\ell^*,k}^L \leq \phi \leq \psi_{j,\ell^*,k}^U$.  Similar to the $k=1$ case, whenever $\dim(A_{j,k}^L) < d$, we define $\psi_{j,\ell,k}^L(x) := -y_k$ and $\psi_{j,\ell,k}^U(x) := -y_0$ for $x \in A_{j,k}^L$.  We can now define a bracketing set $\{[f_{j,\ell,j',k}^L,f_{j,\ell,j',k}^U]:j=1,\ldots,N_{S,k,d}, \ell=1,\ldots,N_{B,k,d},j'=1,\ldots,N_{k-1,d}'\}$ for $\mathcal{F}_{y_k}(\mathcal{D})$ by
\begin{align*}
f_{j,\ell,j',k}^L(x) &:= e^{\psi_{j,\ell,k}^L(x)}\mathbbm{1}_{\{x \in A_{j,k}^L \setminus A_{j',k-1}^U\}} + f_{j',k-1}^L(x)\mathbbm{1}_{\{x \in A_{j',k-1}^U\}} \\
f_{j,\ell,j',k}^U(x) &:= e^{\min\{-y_{k-1},\psi_{j,\ell,k}^U(x)\}}\mathbbm{1}_{\{x \in A_{j,k}^L \setminus A_{j',k-1}^U\}} + f_{j',k-1}^U(x)\mathbbm{1}_{\{x \in A_{j',k-1}^U\}} \\
&\hspace{8cm}+ e^{-y_{k-1}}\mathbbm{1}_{\{x \in A_{j,k}^U \setminus (A_{j',k-1}^U \cup A_{j,k}^L)\}}
\end{align*}
for $x \in [0,1]^d$.  Again, we can compute
\begin{align*}
L_2^2(f_{j,\ell,j',k}^U,f_{j,\ell,j',k}^L) &\leq e^{-2y_{k-1}}L_2^2(\psi_{j,\ell,k}^U,\psi_{j,\ell,k}^L) + \epsilon^2\biggl(1 + 5\sum_{j=1}^{k-1} e^{-y_{j-1}}\biggr) + e^{-2y_{k-1}}L_1(\mathbbm{1}_{A_{j,k}^U},\mathbbm{1}_{A_{j,k}^L}) \\
&\leq (K_{1,k}^*)^2\epsilon^2.
\end{align*}
When $d=1$ the cardinality of this bracketing set is
\[
N_{k-1,1}'N_{S,k,1}N_{B,k,1} \leq e^{K_{2,k-1,1}^*h_1(\epsilon)} \times e^{K_1-y_{k-1}} \epsilon^{-2}e^{K_1^{\circ}h_1\bigl(\frac{\epsilon e^{y_{k-1}/2}}{y_k}\bigr)} \leq e^{K_{2,k,1}^* \epsilon^{-1/2}},
\]
as required.  When $d = 2$, the cardinality is 
\begin{align*}
N_{k-1,2}'N_{S,k,2}N_{B,k,2} &\leq \exp\Bigl\{K_{2,k-1,2}^*h_2(\epsilon) + K_2 e^{-y_{k-1}/2}\epsilon^{-1} + K_2^{\circ}h_2\Bigl(\frac{\epsilon e^{y_{k-1}/2}}{y_k}\Bigr)\Bigr\} \\
&\leq e^{K_{2,k,2}^*\epsilon^{-1}\log_{++}^{3/2}(1/\epsilon)}.
\end{align*}
Finally, when $d = 3$, the cardinality of the bracketing set is
\[
N_{k-1,3}'N_{S,k,3}N_{B,k,3} \leq \exp\Bigl\{K_{2,k-1,3}^*h_3(\epsilon) + K_3 e^{-y_{k-1}}\epsilon^{-2} + K_3^{\circ}h_3\Bigl(\frac{\epsilon e^{y_{k-1}/2}}{y_k}\Bigr)\Bigr\} \leq e^{K_{2,k,3}^*\epsilon^{-2}}.
\]
This establishes the claim~(\ref{Eq:Claim}) by induction.

We now consider the class $\bar{\mathcal{F}}_{y_{k_0}}(\mathcal{D}) := \{e^\phi: \phi \in \Phi \setminus \cup_{D \in \mathcal{D}} \Phi_{y_{k_0}}(D)\}$.  A bracketing set for this class is given by $\{[\bar{f}_{j,\ell,j'}^L,\bar{f}_{j,\ell,j'}^U]:j=1,\ldots,N_{S,k_0,d}, \ell=1,\ldots,N_{B,k_0,d},j'=1,\ldots,N_{k_0-1,d}'\}$, where
\begin{align*}
\bar{f}_{j,\ell,j'}^L(x) &:= f_{j,\ell,j',k_0}^L(x) \\
\bar{f}_{j,\ell,j'}^U(x) &:= f_{j,\ell,j',k_0}^U(x)\mathbbm{1}_{\{x \in A_{j,k_0}^U\}} + e^{-y_{k_0}}\mathbbm{1}_{\{x \notin A_{j,k_0}^U\}}
\end{align*}
for $x \in [0,1]^d$.  Observe that
\[
L_2^2(\bar{f}_{j,\ell,j'}^U,\bar{f}_{j,\ell,j'}^L) \leq (K_{1,k_0}^*)^2\epsilon^2 + e^{-2y_{k_0}} \leq \Bigl(K_{1,k_0}^* + \frac{1}{\epsilon_{00}}\Bigr)^2\epsilon^2.
\]
Since $k_0$ depends on $\epsilon$, it is important to observe that for all $k=1,\ldots,k_0$,
\begin{align*}
K_{1,k}^* &\leq 4 \\
K_{2,k,1}^* &\leq 2K_1 + 32 + 8K_1^{\circ} =: \bar{K}_{2,1}^* - \log 2, \\
K_{2,k,2}^* &\leq 2K_2 + K_2^{\circ}(8e^{1/2}+1) =: \bar{K}_{2,2}^* - \log 2, \\
K_{2,k,3}^* &\leq K_3 + K_3^\circ(8e + 1) =: \bar{K}_{2,3}^* - \log 2.
\end{align*}
In particular, these bounds do not depend on $\epsilon$.  For $\tilde{b} > 0$, write $\mathcal{G}_{d,[0,1]^d,\tilde{b}}$ for the set of functions on $[0,1]^d$ of the form $f^{1/2}$, where $f$ is an upper semi-continuous, log-concave function whose domain is a closed, convex subset of $[0,1]^d$, and for which $f^{1/2} \leq \tilde{b}$.  Noting that $\mathcal{G}_{d,[0,1]^d,e^{-1}} \subseteq \{e^\phi:\phi \in \Phi\} = \mathcal{F}_{y_{k_0}}(\mathcal{D}) \cup \bar{\mathcal{F}}_{y_{k_0}}(\mathcal{D})$, and since $\epsilon \in (0,\epsilon_{00}]$ was arbitrary, we conclude that
\begin{align*}
\log N_{[]}\bigl((4+\epsilon_{00}^{-1})\epsilon,\mathcal{G}_{d,[0,1]^d,e^{-1}},L_2\bigr) &\leq \log N_{[]}\bigl((4+\epsilon_{00}^{-1})\epsilon,\{e^\phi:\phi \in \Phi\},L_2\bigr) \\
&\leq \bar{K}_{2,d}^* h_d(\epsilon)
\end{align*}
for all $\epsilon \in (0,\epsilon_{00}]$ and $d = 1,2,3$.  By a simple scaling argument, we deduce that for any $b > 0$,  
\[
\log N_{[]}\bigl((4+\epsilon_{00}^{-1})\epsilon b^{1/2},\mathcal{G}_{d,[0,1]^d,be^{-1}},L_2\bigr) \leq \bar{K}_{2,d}^*h_d(\epsilon/b^{1/2})
\]
for all $\epsilon \in (0,b^{1/2}\epsilon_{00}]$.

We now show how to translate and scale brackets appropriately for other cubes.  Let $A_{0,d}, B_{0,d} > 0$ be as in Corollary~\ref{Cor:Transform}(a).  Define 
\[
T_d := \frac{A_{0,d}(d^{1/2}+1)}{(1+\eta_d)^{1/2}} + B_{0,d} + \frac{d}{2} \log \biggl(\frac{1}{1-\eta_d}\biggr) + d+1,
\]
set $\epsilon_{01,d} := \min\bigl\{e^{-T_d},\frac{1}{d^d}\epsilon_{00}^4\}$ and fix $\epsilon \in (0,\epsilon_{01,d}]$.  For $\mathbf{j} = (j_1,\ldots,j_d) \in \mathbb{Z}^d$, let
\[
C_\mathbf{j}^2 := \exp\biggl(-\frac{A_{0,d}\|\mathbf{j}\|}{(1+\eta_d)^{1/2}} + T_d\biggr),
\]
where $\|\mathbf{j}\|^2 := \sum_{k=1}^d j_k^2$.  Note from Corollary~\ref{Cor:Transform}(a) that 
\[
\sup_{\tilde{f} \in \tilde{\mathcal{F}}_d^{1,\eta_d}} \sup_{x \in [j_1,j_1+1] \times \ldots \times [j_d,j_d+1]} \tilde{f}(x)^{1/2} \leq C_\mathbf{j}e^{-1}.  
\]
Let $j_0 := \max\{\|\mathbf{j}\|:\mathbf{j} \in \mathbb{Z}^d, C_{\mathbf{j}} \geq \epsilon\{\log(1/\epsilon)\}^{-(d-1)/2}\}$, so we may assume $j_0 \geq 1$.  For $\mathbf{j} = (j_1,\ldots,j_d) \in \mathbb{Z}^d$ such that $\|\mathbf{j}\| \leq j_0$, let $N_{\mathbf{j}} := N_{[]}\bigl((4+\epsilon_{00}^{-1})\epsilon C_{\mathbf{j}}^{1/2},\mathcal{G}_{d,[0,1]^d,C_{\mathbf{j}}e^{-1}},L_2\bigr)$, and let $\{[f_{\mathbf{j},\ell}^L,f_{\mathbf{j},\ell}^U],\ell=1,\ldots,N_\mathbf{j}$\}, denote a bracketing set for $\mathcal{G}_{d,[0,1]^d,C_{\mathbf{j}}e^{-1}}$ with $L_2(f_{\mathbf{j},\ell}^U,f_{\mathbf{j},\ell}^L) \leq (4+\epsilon_{00}^{-1})\epsilon C_{\mathbf{j}}^{1/2}$.  Such a bracketing set can be found because when $\|\mathbf{j}\| \leq j_0$, we have 
\[
\epsilon \leq C_{\mathbf{j}}^{1/2}\epsilon^{1/2}\{\log(1/\epsilon)\}^{d/4} \leq C_{\mathbf{j}}^{1/2}\epsilon^{1/2}(d\epsilon^{-(1/d)})^{d/4} \leq C_{\mathbf{j}}^{1/2}\epsilon_{00}.
\]
Finally, for $\{\boldsymbol{\ell} = (\ell_{\mathbf{j}}) \in \times_{\mathbf{j}:\|\mathbf{j}\| \leq j_0} \{1,\ldots,N_{\mathbf{j}}\}\}$, we define a bracketing set for $\{\tilde{f}^{1/2}:\tilde{f} \in \tilde{\mathcal{F}}_d^{1,\eta_d}\}$ by
\begin{align*}
f_{\boldsymbol{\ell}}^L(x) &:= \sum_{\mathbf{j}:\|\mathbf{j}\|\leq j_0} f_{\mathbf{j},\ell_{\mathbf{j}}}^L(x - \mathbf{j})\mathbbm{1}_{\{x \in [j_1,j_1+1) \times \ldots \times [j_d,j_d+1)\}}, \\
f_{\boldsymbol{\ell}}^U(x) &:= \sum_{\mathbf{j}:\|\mathbf{j}\|\leq j_0} f_{\mathbf{j},\ell_{\mathbf{j}}}^U(x - \mathbf{j})\mathbbm{1}_{\{x \in [j_1,j_1+1) \times \ldots \times [j_d,j_d+1)\}} + e^{-1}\sum_{\mathbf{j}:\|\mathbf{j}\| > j_0} C_{\mathbf{j}} \mathbbm{1}_{\{x \in [j_1,j_1+1) \times \ldots \times [j_d,j_d+1)\}}
\end{align*}
for $x \in \mathbb{R}^d$.  Note that
\begin{align*}
L_2(f_{\boldsymbol{\ell}}^U,f_{\boldsymbol{\ell}}^L) &\leq (4+\epsilon_{00}^{-1})\epsilon \biggl( \, \sum_{\mathbf{j} \in \mathbb{Z}^d} C_{\mathbf{j}}\biggr)^{1/2} + \biggl( \, \sum_{\mathbf{j}:\|\mathbf{j}\| > j_0} C_{\mathbf{j}}^2\biggr)^{1/2}e^{-1} \\
&\leq (4+\epsilon_{00}^{-1})\epsilon \frac{e^{\frac{A_{0,d}d^{1/2}}{4(1+\eta_d)^{1/2}} + \frac{T_d}{4}}d^{1/2}\pi^{d/4}}{\Gamma(1+d/2)^{1/2}}\biggl\{\int_0^\infty r^{d-1} e^{-\frac{r A_{0,d}}{2(1+\eta_d)^{1/2}}} \, dr\biggr\}^{1/2} \\
&\hspace{1cm}+ \frac{e^{\frac{A_{0,d}d^{1/2}}{2(1+\eta_d)^{1/2}} + \frac{T_d}{2}-1}d^{1/2}\pi^{d/4}}{\Gamma(1+d/2)^{1/2}}\biggl\{\int_{j_0}^\infty r^{d-1} e^{-\frac{r A_{0,d}}{(1+\eta_d)^{1/2}}} \, dr\biggr\}^{1/2} \\
&\leq \epsilon (B_1 + B_2),
\end{align*}
where
\begin{align*}
B_1 &:= (4+\epsilon_{00}^{-1})\frac{e^{\frac{A_{0,d}d^{1/2}}{4(1+\eta_d)^{1/2}} + \frac{T_d}{4}}d^{1/2}\pi^{d/4}}{\Gamma(1+d/2)^{1/2}}\frac{\{(d-1)!\}^{1/2}2^{d/2}(1+\eta_d)^{d/4}}{A_{0,d}^{d/2}}, \\
B_2 &:= \frac{e^{\frac{A_{0,d}d^{1/2}}{2(1+\eta_d)^{1/2}} + \frac{T_d}{2}-1}d^{1/2}\pi^{d/4}}{\Gamma(1+d/2)^{1/2}}\frac{(1+\eta_d)^{d/4}}{A_{0,d}^{d/2}}e^{-\frac{T_d}{2} + \frac{A_{0,d}}{2(1+\eta_d)^{1/2}}}(d+2)^{d/2}.
\end{align*}
Note that to obtain the expression for $B_2$, we have used the fact that 
\begin{align*}
\frac{1}{\epsilon} \int_{j_0}^\infty r^{d-1} e^{-\frac{r A_{0,d}}{(1+\eta_d)^{1/2}}} \, dr &= \frac{(1+\eta_d)^{d/4}}{A_{0,d}^{d/2}}\{(d-1)!\}^{1/2}e^{-\frac{j_0A_{0,d}}{2(1+\eta_d)^{1/2}}} \biggl\{\sum_{k=0}^{d-1} \frac{j_0^k A_{0,d}^k}{(1+\eta_d)^{k/2}k!} \biggr\}^{1/2}\epsilon^{-1} \\
&\leq \frac{(1+\eta_d)^{d/4}}{A_{0,d}^{d/2}}e^{-\frac{T_d}{2} + \frac{A_{0,d}}{2(1+\eta_d)^{1/2}}}(d+2)^{d/2},
\end{align*}
using the definition of $j_0$ and $\epsilon_{01,d}$.
Moreover, the cardinality of the bracketing set is
\[
\prod_{\mathbf{j}:\|\mathbf{j}\| \leq j_0} N_{\mathbf{j}} = \exp\biggl\{\bar{K}_{2,d}^* \sum_{\mathbf{j}:\|\mathbf{j}\| \leq j_0} h_d\Bigl(\frac{\epsilon}{C_{\mathbf{j}}^{1/2}}\Bigr)\biggr\} \leq \exp\bigl\{\bar{K}_{2,d}^* B_{3,d}h_d(\epsilon)\bigr\},
\]
where
\begin{align*}
B_{3,1} &:= \sum_{\mathbf{j}:\|\mathbf{j}\| \leq j_0} C_{\mathbf{j}}^{1/4} \leq e^{T_1/8}e^{\frac{A_{0,1}}{8(1+\eta_d)^{1/2}}}\frac{16(1+\eta_d)^{1/2}}{A_{0,1}}, \\
B_{3,2} &:= 2^{3/2}\sum_{\mathbf{j}:\|\mathbf{j}\| \leq j_0} C_{\mathbf{j}}^{1/2} \leq e^{T_2/4}2^{5/2} \pi e^{\frac{A_{0,2}}{2^{3/2}(1+\eta_d)^{1/2}}}\frac{16(1+\eta_d)}{A_{0,2}^2}, \\
B_{3,3} &:= \sum_{\mathbf{j}:\|\mathbf{j}\| \leq j_0} C_{\mathbf{j}} \leq e^{T_3/2}4\pi e^{\frac{3^{1/2}A_{0,3}}{2(1+\eta_d)^{1/2}}} \frac{8(1+\eta_d)^{3/2}}{A_{0,3}^3}.
\end{align*}
Since $\epsilon \in (0,\epsilon_{01,d}]$ was arbitrary, we conclude that 
\[
\log N_{[]}(\epsilon,\tilde{\mathcal{F}}_d^{1,\eta_d},h) = \log N_{[]}(\epsilon,\{\tilde{f}^{1/2}:\tilde{f} \in \tilde{\mathcal{F}}_d^{1,\eta_d}\},L_2) \leq \overline{\overline{K}}_dh_d(\epsilon),
\]
for all $\epsilon \in (0,\epsilon_{02,d}]$, where $\epsilon_{02,d} := \epsilon_{01,d}(B_1+B_2)$ and where 
\[
\overline{\overline{K}}_d := \bar{K}_{2,d}^* B_{3,d}\max\{(B_1+B_2)^{d/2},(B_1+B_2)^{(d-1)}\}\biggl\{2 + \frac{2\log_{++}(B_1+B_2)}{\log_{++}(e/(B_1+B_2))}\biggr\},
\]
where, as in the proof of Proposition~\ref{Prop:Dudley} below, we have used the fact that $\log_{++}(a/\epsilon) \leq \bigl\{2 + \frac{2\log_{++}(a)}{\log_{++}(e/a)}\bigr\}\log_{++}(1/\epsilon)$ for all $a,\epsilon > 0$.  Now let
\[
\epsilon_{03,d} := \max\biggl\{\epsilon_{02,d},\biggl[\frac{(1+\eta_d)^{d/2}}{(1-\eta_d)^{d/2}}\exp\biggl\{\frac{A_{0,d}}{(1+\eta_d)^{1/2}} + B_{0,d}\biggr\}\frac{d!\pi^{d/2}}{\Gamma(1+d/2)A_{0,d}^d}\biggr]^{1/2}\biggr\},
\]
and let $\overline{K}_d := \overline{\overline{K}}_dh_d(\epsilon_{02,d})/h_d(\epsilon_{03,d})$.  For $\epsilon \in (\epsilon_{02,d},\epsilon_{03,d}]$, we have
\[
\log N_{[]}(\epsilon,\tilde{\mathcal{F}}_d^{1,\eta_d},h) \leq \log N_{[]}(\epsilon_{02,d},\tilde{\mathcal{F}}_d^{1,\eta_d},h) \leq \overline{\overline{K}}_dh_d(\epsilon_{02,d}) = \overline{K}_dh_d(\epsilon_{03,d}) \leq \overline{K}_dh_d(\epsilon).
\]
Finally, if $\epsilon > \epsilon_{03,d}$, we can use a single bracketing pair $\{f^L,f^U\}$, with $f^L(x) := 0$ and $f^U(x)$ defined to be the integrable envelope function from Corollary~\ref{Cor:Transform}(a) with $\xi = 1$ and $\eta = \eta_d$ there.  Note that $h(f^U,f^L) \leq \epsilon_{03,d}$.  This proves the upper bound.

\emph{(ii)} Let $\epsilon_{10,d} := \min\bigl\{10^{-6},\eta_d^2/400\bigr\}$.  We start with the case $d=1$, and construct a subset of $\tilde{\mathcal{F}}_1^{1,\eta_1}$ such that each pair of functions in our subset is well separated in Hellinger distance.  Our construction is similar (but not identical) to that in the proof of Theorem~\ref{Thm:LowerBound}.  In particular, our densities are perturbations of part of a semicircle density (with an appropriate constant subtracted), but we need to choose the radius of the semicircle carefully to ensure that the variances of our densities are close to 1.  Fix $\epsilon \in (0,\epsilon_{10,1}]$, and let $\zeta^*$ be the unique solution in $[0.148,0.149]$ of the equation
\[
\frac{2\zeta - \frac{1}{2}\sin(4\zeta) - \frac{2}{3}\sin^3(2\zeta)\cos(2\zeta)}{4\{2\zeta -\frac{1}{2}\sin(4\zeta)\}^2} = 1.
\]
Set $K := \lfloor \frac{\zeta^*}{\arcsin (\epsilon^{1/2})} \rfloor$ and, for $k=0,1,\ldots,K$, let $w_k := k \arcsin(\epsilon^{1/2})$, so that $\zeta^* - 2\epsilon^{1/2} \leq w_K \leq \zeta^*$.  We also define 
\[
r := \biggl\{w_K - \frac{1}{2}\sin w_{4K} + K\epsilon^{1/2}(1-\epsilon)^{1/2}\biggr\}^{-1/2}.
\]
Note that
\begin{align*}
w_K - \frac{1}{2}\sin w_{4K} + K\epsilon^{1/2}(1-\epsilon)^{1/2} \geq 2w_K -\frac{1}{2}\sin w_{4K} - w_K\epsilon &\geq 0.01.
\end{align*}
As in the proof of Theorem~\ref{Thm:LowerBound}, for $k=1,\ldots,K$ and $\ell \in \{0,1\}$, define
\[
x_{k,\ell} := (-1)^\ell r(1-\epsilon)^{1/2}\sin w_{2k-1}.
\]
For $k=1,\ldots,K$, we also define $R_{k,0} := (r \sin w_{2k-2},r \sin w_{2k})$ and set $R_{k,1} := -R_{k,0} = \{-x:x \in R_{k,0}\}$.  Writing $y_k := r(1-\epsilon)^{1/2}\cos w_{2k-1}$, for $k=1,\ldots,K$, we define auxiliary functions 
\begin{align*}
\psi_k(x) &:= (r^2-x^2)^{1/2} \mathbbm{1}_{\{x \in R_{k,0}\}} + \frac{1}{y_k}\{(1 - \epsilon)r^2 - x_{k,1} x\}\mathbbm{1}_{\{x \in R_{k,1}\}}, \\
\tilde{\psi}_k(x) &:= \frac{1}{y_k}\{(1 - \epsilon)r^2 - x_{k,0} x\}\mathbbm{1}_{\{x \in R_{k,0}\}} + (r^2-x^2)^{1/2} \mathbbm{1}_{\{x \in R_{k,1}\}}.
\end{align*}
We can now define $\mathcal{F}_1^L := \{f_\alpha: \alpha = (\alpha_1,\ldots,\alpha_K)^T \in \{0,1\}^K\}$, where
\begin{align*}
f_\alpha(x) &:= -r\cos w_{2K} \mathbbm{1}_{\{|x| \leq r\sin w_{2K}\}} \\
&+ (r^2-x^2)^{1/2} \mathbbm{1}_{\{|x| \leq r\sin w_{2K}\}}\mathbbm{1}_{\{x \notin \cup_{k=1}^K (R_{k,0} \cup R_{k,1})\}} + \sum_{k=1}^K \bigl\{\alpha_k \psi_k(x) + (1-\alpha_k)\tilde{\psi}_k(x)\bigr\}.
\end{align*}
Note here that the only reason for including the second term in this sum is to ensure that each $f_\alpha$ is continuous at the boundaries of the sets $R_{k,\ell}$.  Observe that
\[
\int_{-r\sin w_{2K}}^{r\sin w_{2K}} f_\alpha = r^2\biggl\{w_K - \frac{1}{2}\sin w_{4K} + K\epsilon^{1/2}(1-\epsilon)^{1/2}\biggr\} = 1,
\]
and $\mathcal{F}_1^L \subseteq \mathcal{F}_1$.  Now
\[
\biggl|\int_{-r\sin w_{2K}}^{r\sin w_{2K}} x f_\alpha(x) \, dx\biggr| \leq w_Kr^3\sin (w_{2K})\biggl\{1 - \frac{\epsilon^{1/2}(1-\epsilon)^{1/2}}{w_1}\biggr\} \leq 50\epsilon_{10,1} \leq \frac{\eta_1^{1/2}}{2^{1/2}},
\] 
since $\eta_1^2/400 \leq \eta_1^{1/2}/(2^{1/2}\times 50)$.  We also compute
\begin{align*}
\int_{-r\sin w_{2K}}^{r\sin w_{2K}} x^2 f_\alpha(x) \, dx &\leq \frac{r^4}{4}\biggl\{2w_K - \frac{1}{2}\sin w_{4K} - \frac{2}{3}\sin^3 w_{2K}\cos w_{2K}\biggr\} \\
&\leq \frac{2w_K - \frac{1}{2}\sin w_{4K} - \frac{2}{3}\sin^3 w_{2K}\cos w_{2K}}{4\{2w_K -\frac{1}{2}\sin w_{4K} - w_K\epsilon\}^2} \leq 1 + 20\epsilon_{10,1}^{1/2} \leq 1+\eta_1.
\end{align*}
Finally, since $f_\alpha(x) \geq \bigl\{r^2(1-\epsilon) - x^2\bigr\}^{1/2} - r\cos  w_{2K}$ for $|x| \leq r(1-\epsilon)^{1/2}\sin w_{2K}$, we have
\[
\int_{-r\sin w_{2K}}^{r\sin w_{2K}} x^2 f_\alpha(x) \, dx \geq \frac{r^4(1-\epsilon)^{3/2}}{4}\biggl\{2w_K - \frac{1}{2}\sin w_{4K} - \frac{2}{3}\sin^3 w_{2K}\cos w_{2K}\biggr\} \geq 1 - \frac{\eta_1}{2},
\]
since $(1-\epsilon)^{3/2} \geq 1 - 3\epsilon_{10,1}/2 \geq 1 - \eta_1/2$, so $\mathcal{F}_1^L \subseteq \mathcal{\tilde{F}}_1^{1,\eta_1}$.  By the Gilbert--Varshamov bound \citep[e.g.][Lemma~4.7]{Massart2007}, there exists a subset $\mathcal{F}_{1,*}^L$ of $\mathcal{F}_1^L$ of cardinality $e^{K/8} \geq e^{\frac{0.148}{16}\epsilon^{-1/2}}$ such that $\|\alpha - \beta\|_0 \geq K/4$ for all $f_\alpha, f_\beta \in \mathcal{F}_{1,*}^L$ with $\alpha \neq \beta$.  But then, since $|f_\alpha| \leq r \leq 10$, and $r \geq 7$, we deduce from the proof of Theorem~\ref{Thm:LowerBound} that for any $f_\alpha, f_\beta \in \mathcal{F}_{1,*}^L$ for $\alpha \neq \beta$, we have
\[
h^2(f_\alpha,f_\beta) \geq \frac{1}{4r}L_2^2(f_\alpha,f_\beta) \geq \frac{31}{420}\|\alpha-\beta\|_0 r^2\epsilon^{5/2} > \frac{1}{16} \epsilon^2.
\]
Since the bracketing number at level $\epsilon$ is bounded below by the packing number at level $2\epsilon$, we can let $\epsilon_{11,1} := \epsilon_{10,1}/8$, and conclude that
\[
\log N_{[]}(\epsilon,\mathcal{\tilde{F}}_1^{1,\eta_1},h) \geq \underline{K}_1\epsilon^{-1/2}
\]
for $\epsilon \in (0,\epsilon_{11,1}]$, where $\underline{K}_1 := \frac{0.148}{8^{1/2}16}$.

Finally, we turn to the case $d \geq 2$.  Set $\epsilon_{10,d} := \min\bigl\{10^{-4},\frac{\eta_d^{1/2}}{4(d+2)^{1/2}}\bigr\}$ and fix $\epsilon \in (0,\epsilon_{10,d}]$.  Here, we recall the finite subset $\bar{\mathcal{F}}_d = \bigl\{f_\alpha: \alpha \in \{0,1\}^K\bigr\}$ of uniform densities on closed, convex sets from the proof of Theorem~\ref{Thm:LowerBound} in the case $d \geq 2$, and set 
\[
\bar{\mathcal{F}}_{d,r} := \{f_{\alpha,r}(\cdot) = r^{-d}f_\alpha(\cdot/r): f_\alpha \in \bar{\mathcal{F}}_d\},
\]
with $r := (d+2)^{1/2}$.  Our reason for choosing $r := (d+2)^{1/2}$ is to ensure that the densities in our class have marginal variances close to 1.  Again, we must check that $\bar{\mathcal{F}}_{d,r} \subseteq \tilde{\mathcal{F}}_d^{1,\eta_d}$.  To this end, note that for any $f_{\alpha,r} \in \bar{\mathcal{F}}_{d,r}$, we have
\begin{align*}
\biggl\|\int_{\mathbb{R}^d} x f_{\alpha,r}(x) \, dx\biggr\| &\leq \frac{Kr}{2c_{K,\epsilon}} \frac{\pi^{(d-1)/2}}{\Gamma((d+1)/2)} \int_0^{\epsilon^2 - \epsilon^4/4} t^{\frac{d+1}{2}-1}(1-t)^{-1/2} \, dt \leq \frac{(d+2)^{1/2}}{2^{d-2}}\epsilon^2,
\end{align*}
where we have used the bound on $c_{K,\epsilon}$ from~\eqref{Eq:crKe} and the fact that $\frac{\Gamma(1+d/2)}{\Gamma(\frac{1+d}{2})} \leq (d+1)^{1/2}/2^{1/2}$.  Now, for any $j = 1,\ldots,d$,
\begin{align*}
\int_{\mathbb{R}^d} x_j^2 f_{\alpha,r}(x) \, dx &\leq \frac{1}{c_{K,\epsilon}r^d} \int_{\bar{B}_d(0,r)} x_j^2 \, dx = \frac{1}{d c_{K,\epsilon}r^d} \int_{\bar{B}_d(0,r)} \|x\|^2 \, dx \\
&\leq \frac{1}{1 - K\epsilon^{d+1}\pi^{-1/2}(d+1)^{-1/2}}\frac{r^2}{d+2} \leq 1+\frac{\eta_d}{2},
\end{align*}
and
\begin{align*}
\int_{\mathbb{R}^d} x_j^2 f_{\alpha,r}(x) \, dx &\geq \frac{1}{d c_{K,\epsilon}r^d} \int_{\bar{B}_d(0,r(1-\epsilon^2/2))} \|x\|^2 \, dx \geq \frac{r^2(1-\epsilon^2/2)^{d+2}}{d+2} \geq 1 - \frac{\eta_d}{2}. 
\end{align*}  
Finally, for $j, k \in \{1,\ldots,d\}$ with $j \neq k$, we have
\[
\biggl|\int_{\mathbb{R}^d} x_jx_k f_{\alpha,r}(x) \, dx\biggr| \leq \frac{Kr^2}{2c_{K,\epsilon}} \frac{\pi^{(d-1)/2}}{\Gamma((d+1)/2)} \int_0^{\epsilon^2 - \epsilon^4/4} t^{\frac{d+1}{2}-1}(1-t)^{-1/2} \, dt \leq \frac{d+2}{2^{d-2}}\epsilon^2.
\]
We deduce from the Gerschgorin circle theorem \citep{Gerschgorin1931,GradshteynRyzhik2007} that if $\Sigma_{\alpha,r}$ denotes the covariance matrix corresponding to $f_{\alpha,r}$, then 
\begin{align*}
1 - \eta_d \leq 1 - \frac{\eta_d}{2} - \frac{(d+2)}{2^{2(d-2)}}\epsilon^4 &- (d-1)\frac{(d+2)}{2^{d-3}}\epsilon^2 \leq \lambda_{\mathrm{min}}(\Sigma_{\alpha,r}) \\
&\leq \lambda_{\mathrm{max}}(\Sigma_{\alpha,r}) \leq 1 + \frac{\eta_d}{2} + (d-1)\frac{(d+2)}{2^{d-3}}\epsilon^2 \leq 1 + \eta_d.
\end{align*}
We conclude that $\bar{\mathcal{F}}_{d,r} \subseteq \tilde{\mathcal{F}}_d^{1,\eta_d}$.  By the Gilbert--Varshamov bound again, there exists a subset $\mathcal{F}_{d,*}^L$ of $\bar{\mathcal{F}}_{d,r}$ of cardinality $e^{K/8} \geq e^{\frac{(d-1)^{1/2}}{2^{d+4}}\epsilon^{-(d-1)}}$ such that $\|\alpha - \beta\|_0 \geq K/4$ for all $f_\alpha, f_\beta \in \mathcal{F}_{d,*}^L$.  But from the proof of Theorem~\ref{Thm:LowerBound}, for any $f_\alpha, f_\beta \in \mathcal{F}_{d,*}^L$, we have
\[
h^2(f_\alpha,f_\beta) \geq \frac{15^{(d+1)/2}}{16^{(d+1)/2}2\pi(d+1)^{1/2}} K\epsilon^{d+1} > \frac{15^{(d+1)/2}}{10 \times 2^{d+1}16^{(d+1)/2}}\epsilon^2.
\]
Setting $\epsilon_d := \frac{1}{2}\frac{15^{(d+1)/4}}{10^{1/2}2^{(d+1)/2}16^{(d+1)/4}}\epsilon_{10,d}$, we conclude that 
\[
\log N_{[]}(\epsilon,\tilde{\mathcal{F}}_d^{1,\eta_d},h) \geq \underline{K}_d\epsilon^{-(d-1)}
\]
for $\epsilon \in (0,\epsilon_d]$, where 
\[
\underline{K}_d := \frac{(d-1)^{1/2}}{2^{2d+3}}\biggl(\frac{15^{(d+1)/2}}{10 \times 2^{d+1}16^{(d+1)/2}}\biggr)^{(d-1)/2}. 
\]
\end{proof}

\begin{proof}[Proof of Theorem~\ref{Thm:Main}]
Let $\mu := \mathbb{E}(X_1)$ and $\Sigma := \mathrm{Cov}(X_1)$.  Note that since $f_0 \in \mathcal{F}_d$, we have that $\Sigma$ is a finite, positive definite matrix.  We can therefore define $Z_i := \Sigma^{-1/2}(X_i - \mu)$ for $i=1,\ldots,n$, so that $\mathbb{E}(Z_1) = 0$ and $\mathrm{Cov}(Z_1) = I$.  We also set $g_0(z) := (\det \Sigma)^{1/2} f_0(\Sigma^{1/2} z + \mu)$, so $g_0 \in \mathcal{F}_d^{0,I}$, and let $\hat{g}_n(z) := (\det \Sigma)^{1/2}\hat{f}_n(\Sigma^{1/2} z + \mu)$, so by affine equivariance \citep[][Remark~2.4]{DSS2011}, $\hat{g}_n$ is the log-concave maximum likelihood estimator of $g_0$ based on $Z_1,\ldots,Z_n$.

Let $\hat{\mu}_n := \int_{\mathbb{R}^d} z \hat{g}_n(z) \, dz$ and $\hat{\Sigma}_n := \int_{\mathbb{R}^d} (z-\hat{\mu}_n)(z-\hat{\mu}_n)^T \hat{g}_n(z) \, dz$ respectively denote the mean vector and covariance matrix corresponding to $\hat{g}_n$.  Then by Lemma~\ref{Lemma:ThreeTerms} in Section~\ref{Sec:ProofMain} below, there exists $\eta_d \in (0,1)$ and $n_0 \in \mathbb{N}$, depending only on $d$, such that
\[
\sup_{g_0 \in \mathcal{F}_d^{0,I}} \mathbb{P}_{g_0}\bigl(\hat{g}_n \notin \tilde{\mathcal{F}}_d^{1,\eta_d}\bigr) \leq \frac{1}{n^{4/5}}
\]
for $n \geq n_0$.

We can now apply Theorem~\ref{Thm:vdG} in Section~\ref{Sec:ProofMain}, which provides an exponential tail inequality controlling the performance of a maximum likelihood estimator in Hellinger distance in terms of a bracketing entropy integral.  It is an immediate consequence of Theorem~7.4 of \citet{vandeGeer2000}, although our notation is slightly different (in particular her definition of Hellinger distance is normalised with a factor of $1/\sqrt{2}$) and we have used the fact (apparent from her proofs) that, in her notation, we may take $C = 2^{13/2}$.

In Theorem~\ref{Thm:vdG}, we take $\bar{\mathcal{F}} := \bigl\{\frac{\tilde{f}+g_0}{2}:\tilde{f} \in \tilde{\mathcal{F}}_d^{1,\eta_d}\bigr\}$.  Note that if $[f^L,f^U]$ are elements of a bracketing set for $\tilde{\mathcal{F}}_d^{1,\eta_d}$, and we set $\bar{f}^L := \frac{f^L + g_0}{2}$ and $\bar{f}^U := \frac{f^U + g_0}{2}$, then
\[
h^2(\bar{f}^U,\bar{f}^L) = \frac{1}{2} \int_{\mathbb{R}^d} \{(f^U + g_0)^{1/2} - (f^L + g_0)^{1/2}\}^2 \leq  \frac{1}{2}h^2(f^U,f^L).
\]
It follows from this and our bracketing entropy bound (Theorem~\ref{Thm:BracketingBounds}) that 
\[
\log N_{[]}(u,\bar{\mathcal{F}},h) \leq \log N_{[]}(2^{1/2}u,\tilde{\mathcal{F}}_d^{1,\eta_d},h) \leq \left\{ \begin{array}{ll} 2^{-1/4}\overline{K}_1 u^{-1/2} & \mbox{for $d=1$} \\
2^{-1/2}\overline{K}_2u^{-1}\log_{++}^{3/2}(1/u) & \mbox{for $d = 2$} \\
2^{-1}\overline{K}_3u^{-2}& \mbox{for $d = 3$.}
\end{array} \right.   
\]
We now consider three different cases, assuming throughout that $n \geq d+1$ so that, with probability 1, the log-concave maximum likelihood estimator exists and is unique.
\begin{enumerate}
\item For $d=1$, we set $\delta_n := 2^{-1/2}M_1^{1/2}n^{-2/5}$, where $M_1 := \max\bigl\{\bigl(\frac{2^{37/2}}{3}\bigr)^{8/5}\overline{K}_1^{4/5},2^{33}\bigr\}$.  Then
\[
\int_{\delta_n^2/2^{13}}^{\delta_n} \sqrt{\log N_{[]}(u,\bar{\mathcal{F}},h)} \, du \leq \frac{4}{2^{1/2}3}\overline{K}_1^{1/2} M_1^{3/8} n^{-3/10} \leq 2^{-16}n^{1/2}\delta_n^2.
\]
Moreover, $\delta_n \leq 2^{-17}M_1 n^{-3/10} = 2^{-16}n^{1/2}\delta_n^2$.  
We conclude by Theorem~\ref{Thm:vdG} that for $t \geq M_1$, 
\begin{align*}
\sup_{g_0 \in \mathcal{F}_d^{0,I}} \mathbb{P}_{g_0}\bigl[\bigl\{n^{4/5}h^2(\hat{g}_n,g_0) \geq t\bigr\} \cap \bigl\{\hat{g}_n \in \tilde{\mathcal{F}}_d^{1,\eta_d}\bigr\}\bigr] &\leq 2^{13/2}\sum_{s=0}^\infty \exp\biggl(-\frac{2^{2s}tn^{1/5}}{2^{28}}\biggr) \\
&\leq 2^{15/2}\exp\biggl(-\frac{tn^{1/5}}{2^{28}}\biggr),
\end{align*}
where the final bound follows because $tn^{1/5}/2^{28} \geq \log 2$.
\item For $d = 2$, we set $\delta_n := 2^{-1/2}M_2^{1/2}n^{-1/3}\log^{1/2} n$, where $M_2 := \max\bigl\{2^{23}\overline{K}_2^{2/3}5^{4/3}/3,2^{33}\bigr\}$.  Let $n_{0,2}$ be large enough that $\delta_n \leq 1/e$ for $n \geq n_{0,2}$.  Then, for such $n$,
\begin{align*}
&\int_{\delta_n^2/2^{13}}^{\delta_n} \sqrt{\log N_{[]}(u,\bar{\mathcal{F}},h)} \, du \leq 2^{-1/4}\overline{K}_2^{1/2} \int_0^{\delta_n} u^{-1/2}\log^{3/4}(1/u) \, du \\
&=2^{-1/4}\overline{K}_2^{1/2}\int_{\log(1/\delta_n)}^\infty \! \! \! \! \! \! s^{3/4}e^{-s/2} \, ds = 2^{-1/4}\overline{K}_2^{1/2}\biggl\{2\delta_n^{1/2}\log^{3/4}\Bigl(\frac{1}{\delta_n}\Bigr) + \frac{3}{2}\int_{\log(1/\delta_n)}^\infty  \! \! \! \! \! \! s^{-1/4}e^{-s/2} \, ds\biggr\} \\
&\leq 2^{-1/4}\overline{K}_2^{1/2}5\delta_n^{1/2}\log^{3/4}(1/\delta_n) \leq  2^{1/2}3^{-3/4}\overline{K}_2^{1/2}5\delta_n^{1/2}\log^{3/4} n \leq 2^{-16}n^{1/2}\delta_n^2.
\end{align*}
where we have used the fact that $2^{1/2}M_2^{-1/2}\log^{-1/2}n \leq n^{1/3}$ in the penultimate inequality.  We conclude that for $n \geq n_{0,2}$ and $t \geq M_2$, we have 
\[
\sup_{g_0 \in \mathcal{F}_d^{0,I}} \mathbb{P}_{g_0}\biggl[\biggl\{\frac{n^{2/3}}{\log n}h^2(\hat{g}_n,g_0) \geq t\biggr\} \cap \bigl\{\hat{g}_n \in \tilde{\mathcal{F}}_d^{1,\eta_d}\bigr\}\bigr] \leq 2^{15/2}\exp\biggl(-\frac{tn^{1/3}\log n}{2^{28}}\biggr).
\]
\item For $d = 3$, the entropy integral diverges as $\delta \searrow 0$, so we cannot bound the bracketing entropy integral by replacing the lower limit with zero.  Nevertheless, we can set $\delta_n := 2^{-1/2}M_3^{1/2}n^{-1/4}\log^{1/2}n$, where $M_3 := \bigl\{2^{33/2}10\overline{K}_3^{1/2},2^{33}\bigr\}$.  For $t \geq M_3$, we have 
\[
\sup_{g_0 \in \mathcal{F}_d^{0,I}} \mathbb{P}_{g_0}\biggl[\biggl\{\frac{n^{1/2}}{\log n} h^2(\hat{g}_n,g_0) \geq t\biggr\} \cap \bigl\{\hat{g}_n \in \tilde{\mathcal{F}}_d^{1,\eta_d}\bigr\}\biggr] \leq 2^{15/2}\exp\biggl(-\frac{tn^{1/2}\log n}{2^{28}}\biggr).
\]
\end{enumerate}
Let $\rho_{n,1}^2 := n^{4/5}$, $\rho_{n,2}^2 := n^{2/3}(\log n)^{-1}$ and $\rho_{n,3}^2 := n^{1/2}(\log n)^{-1}$.  We conclude that if $n \geq \max(n_0,d+1)$ (and also $n \geq n_{0,2}$ when $d=2$), then
\begin{align*}
&\rho_{n,d}^2 \sup_{f_0 \in \mathcal{F}_d} \mathbb{E}_{f_0}\{h^2(\hat{f}_n,f_0)\} = \rho_{n,d}^2 \sup_{g_0 \in \mathcal{F}_d^{0,I}} \mathbb{E}_{g_0}\{h^2(\hat{g}_n,g_0)\} \\
&\leq \sup_{g_0 \in \mathcal{F}_d^{0,I}} \int_0^\infty \mathbb{P}_{g_0}\bigl[\bigl\{\rho_{n,d}^2 h^2(\hat{g}_n,g_0) \geq t\} \cap \bigl\{\hat{g}_n \in \tilde{\mathcal{F}}_d^{1,\eta_d}\bigr\}\bigr] \, dt + 2\rho_{n,d}^2 \sup_{g_0 \in \mathcal{F}_d^{0,I}}\mathbb{P}_{g_0}(\hat{g}_n \notin \tilde{\mathcal{F}}_d^{1,\eta_d}) \\
&\leq M_d + 2^{71/2} + 2,
\end{align*}
as required.
\end{proof}

\subsection{Auxiliary results}

\subsubsection{Auxiliary results for the proof of Theorem~\ref{Thm:LowerBound}}

The following lemma is an immediate consequence of Assouad's lemma as stated in, e.g.\ \citet[][p.~347]{vanderVaart1998} or \citet[][pp.~118--9]{Tsybakov2009}.
\begin{lemma}
\label{Lemma:Kim}
Suppose that the loss function $L$ belongs to the set $\{L_1^2,L_2^2,h^2\}$.  Let $K \in \mathbb{N}$, and suppose that $\{f_\alpha: \alpha \in \{0,1\}^K\}$ is a subset of $\mathcal{F}_d$ with the following two properties:
\begin{enumerate}[(i)]
\item There exists $\gamma > 0$ such that 
\[
L(f_\alpha,f_\beta) \geq \gamma \|\alpha - \beta\|_0
\]
for all $\alpha, \beta \in \{0,1\}^K$, where $\|\alpha - \beta\|_0$ denotes the Hamming distance between $\alpha$ and $\beta$
\item There exists $C \in (0,1)$ such that for every $\alpha, \beta \in \{0,1\}^K$ with $\|\alpha - \beta\|_0 = 1$, we have
\begin{equation}
\label{Eq:h2}
h^2(f_\alpha,f_\beta) \leq \frac{C}{n}.
\end{equation}
\end{enumerate} 
Then
\[
\inf_{\tilde{f}_n \in \tilde{\mathcal{F}}_n} \sup_{f \in \mathcal{F}_d} \mathbb{E}_f\{L(\tilde{f}_n,f)\} \geq \frac{K}{8}(1-C^{1/2})\gamma.
\]
\end{lemma}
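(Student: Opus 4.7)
The plan is to apply the standard Assouad testing scheme to the finite subfamily $\{f_\alpha\}$, reducing the problem to bounding the total variation distance between neighbouring product measures, which is then controlled via condition~(ii) and tensorization of the Hellinger affinity. First I would reduce the supremum to an average over the cube,
\[
\sup_{f \in \mathcal{F}_d} \mathbb{E}_f L(\tilde f_n, f) \geq \frac{1}{2^K} \sum_{\alpha \in \{0,1\}^K} \mathbb{E}_{f_\alpha} L(\tilde f_n, f_\alpha).
\]
Since each $L \in \{L_1^2, L_2^2, h^2\}$ is the square of a genuine metric, the elementary inequality $(a+b)^2 \leq 2a^2 + 2b^2$ yields $L(f,g) \leq 2L(f,r) + 2L(r,g)$. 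Defining $\hat\alpha \in \argmin_{\beta \in \{0,1\}^K} L(\tilde f_n, f_\beta)$ and combining with condition~(i),
\[
\gamma \|\hat\alpha - \alpha\|_0 \leq L(f_{\hat\alpha}, f_\alpha) \leq 2 L(\tilde f_n, f_{\hat\alpha}) + 2 L(\tilde f_n, f_\alpha) \leq 4 L(\tilde f_n, f_\alpha),
\]
so $L(\tilde f_n, f_\alpha) \geq (\gamma/4)\|\hat\alpha - \alpha\|_0$.

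Next I would decompose the Hamming distance coordinatewise and pair each $\alpha$ with its neighbour $\alpha^{(k)}$ differing from $\alpha$ only in position $k$. Writing $\mathbb{P}_\alpha$ for the joint law of $X_1,\ldots,X_n$ under $f_\alpha$, the Le~Cam two-point bound applied to the binary test $\hat\alpha_k(X_1,\ldots,X_n)$ gives
\[
\mathbb{P}_\alpha(\hat\alpha_k \neq \alpha_k) + \mathbb{P}_{\alpha^{(k)}}(\hat\alpha_k \neq \alpha^{(k)}_k) \geq 1 - \mathrm{TV}(\mathbb{P}_\alpha, \mathbb{P}_{\alpha^{(k)}}).
\]
Summing this over the $2^{K-1}$ such pairs and then over $k = 1,\ldots,K$ yields
\[
\frac{1}{2^K} \sum_\alpha \mathbb{E}_{f_\alpha} \|\hat\alpha - \alpha\|_0 \geq \frac{K}{2}\Bigl(1 - \max_{\|\alpha - \beta\|_0 = 1} \mathrm{TV}(\mathbb{P}_\alpha, \mathbb{P}_\beta)\Bigr),
\]
and combining with the previous display bounds the averaged risk below by $(K\gamma/8)(1 - \max \mathrm{TV})$.

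Finally I would close the bound via condition~(ii). Since the Hellinger affinity is multiplicative under product measures, Bernoulli's inequality yields
\[
h^2(\mathbb{P}_\alpha, \mathbb{P}_\beta) = 2\bigl\{1 - (1 - h^2(f_\alpha,f_\beta)/2)^n\bigr\} \leq n\, h^2(f_\alpha, f_\beta) \leq C
\]
for any neighbouring pair. Combined with $\mathrm{TV}(P,Q) \leq h(P,Q)$ (Cauchy--Schwarz applied to $|p-q| = |\sqrt p - \sqrt q|(\sqrt p + \sqrt q)$), this forces $\max \mathrm{TV} \leq C^{1/2}$ and delivers the claimed lower bound $(K/8)(1 - C^{1/2})\gamma$. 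The only mildly delicate step is the squared triangle inequality at the very beginning: $L$ is itself not a metric, and it is precisely this reduction that costs the factor of $4$ and produces the constant $1/8$; the remainder is a textbook execution of Assouad's lemma.
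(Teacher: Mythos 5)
Your argument is correct and is exactly the standard proof of Assouad's lemma (projection onto the nearest $f_\beta$ via the quasi-triangle inequality for squared metrics, coordinatewise reduction to two-point tests, and control of the total variation between product measures through the tensorized Hellinger affinity), which is precisely what the paper invokes: it derives the lemma as an immediate consequence of Assouad's lemma as stated in van der Vaart (1998) or Tsybakov (2009) rather than reproving it. All the constants and inequalities check out, so there is nothing to add.
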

For completeness, we now give lower and upper bounds on the packing number of the unit Euclidean sphere $\mathcal{S}_1 := \bar{B}_d(0,1) \setminus B_d(0,1)$; the lower bound was used in the proof of Theorem~\ref{Thm:LowerBound} in Section~\ref{Sec:ProofLowerBound} (cf. also the proof of Theorem~\ref{Thm:BracketingBounds} in Section~\ref{Sec:ProofBracketingBounds}).  Similar results can be found in, e.g., \citet{Guntuboyina2012}.  Let $d \geq 2$, and for $\epsilon > 0$, let $N_\epsilon$ denote the packing number with respect to Euclidean distance of $\mathcal{S}_1$; thus $N_\epsilon$ is the maximal $N \in \mathbb{N}$ such that there exist $x_1,\ldots,x_N \in \mathcal{S}_1$ with $\|x_j - x_k\| > \epsilon$ for all $j \neq k$.  
\begin{lemma}
\label{Lemma:PackingSet}
Let $d \geq 2$.  For any $\epsilon \in (0,1/2]$, we have
\begin{align*}
\frac{(2\pi)^{1/2}(d-1)^{1/2}}{3^{1/2}2^{d-1}}\epsilon^{-(d-1)} &\leq \frac{(2\pi)^{1/2}(d-1)^{1/2}\{1 - (4\epsilon^2 - 4\epsilon^4)\}^{1/2}}{2^{d-1}(1-\epsilon^2)^{(d-1)/2}}\epsilon^{-(d-1)} \leq N_{2\epsilon} \\
&\leq \frac{\pi (d-1)^{1/2}}{(1-\epsilon^2/4)^{(d-1)/2}} \epsilon^{-(d-1)} \leq \frac{4^{d-1}\pi (d-1)^{1/2}}{15^{(d-1)/2}}\epsilon^{-(d-1)}.
\end{align*}
\end{lemma}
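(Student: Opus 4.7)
The proof is a standard volume-comparison argument on $\mathcal{S}_1$. Let $\sigma_d := 2\pi^{d/2}/\Gamma(d/2)$ denote the total surface measure of $\mathcal{S}_1$ and, for $r \in (0,2]$, let $A(r)$ be the surface area of a chord-radius-$r$ cap (independent of the centre by isotropy). Parametrising the cap by $y = \cos\theta\cdot x_0 + \sin\theta\cdot v$ with $v \in x_0^\perp \cap \mathcal{S}_1$ and substituting $\rho = \sin\theta$ gives
\[
A(r) \;=\; \sigma_{d-1} \int_0^{r\sqrt{1-r^2/4}} \frac{\rho^{d-2}}{\sqrt{1-\rho^2}}\, d\rho, \qquad \frac{\sigma_d}{\sigma_{d-1}} \;=\; \pi^{1/2}\,\frac{\Gamma((d-1)/2)}{\Gamma(d/2)}.
\]

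For the upper bound on $N_{2\epsilon}$, the open chord-radius-$\epsilon$ caps about the $N_{2\epsilon}$ packing points are pairwise disjoint by the triangle inequality, so $N_{2\epsilon}A(\epsilon) \leq \sigma_d$. Bounding $1/\sqrt{1-\rho^2} \geq 1$ on the integration domain gives $A(\epsilon) \geq \sigma_{d-1}\epsilon^{d-1}(1-\epsilon^2/4)^{(d-1)/2}/(d-1)$, and I would combine this with the gamma-function inequality $\Gamma((d-1)/2)/\Gamma(d/2) \leq \sqrt{\pi/(d-1)}$ to obtain the first upper bound. The latter inequality is equivalent, via the Wallis-integral identity $\int_0^\pi \sin^{d-2}\theta\,d\theta = \sqrt{\pi}\Gamma((d-1)/2)/\Gamma(d/2)$, to $\int_0^\pi \sin^{d-2}\theta\,d\theta \leq \pi/\sqrt{d-1}$, which follows for $d \geq 4$ from $W_n W_{n+1} = \pi/(2(n+1))$ applied to $W_n := \int_0^{\pi/2}\sin^n\theta\,d\theta$ together with monotonicity $W_{n+1} \leq W_n$, and is checked by hand for $d \in \{2,3\}$. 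The second upper bound is immediate from $(1-\epsilon^2/4)^{(d-1)/2} \geq (15/16)^{(d-1)/2} = 15^{(d-1)/2}/4^{d-1}$ for $\epsilon \leq 1/2$.

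For the lower bound, I would use the standard observation that a maximal $2\epsilon$-packing is automatically a $2\epsilon$-covering (else one could append a further point), so the closed chord-radius-$2\epsilon$ caps about the packing points cover $\mathcal{S}_1$, giving $N_{2\epsilon} A(2\epsilon) \geq \sigma_d$. On the integration range $\rho \in [0, 2\epsilon\sqrt{1-\epsilon^2}]$ we have $\rho^2 \leq 4\epsilon^2 - 4\epsilon^4$, so $1/\sqrt{1-\rho^2} \leq 1/\sqrt{1-(4\epsilon^2-4\epsilon^4)}$, yielding
\[
A(2\epsilon) \;\leq\; \frac{\sigma_{d-1}\,(2\epsilon)^{d-1}(1-\epsilon^2)^{(d-1)/2}}{(d-1)\sqrt{1-(4\epsilon^2-4\epsilon^4)}}.
\]
The complementary gamma-function bound $\Gamma((d-1)/2)/\Gamma(d/2) \geq \sqrt{2/(d-1)}$ follows in one line from log-convexity of $\Gamma$: indeed $\Gamma(d/2)^2 \leq \Gamma((d-1)/2)\Gamma((d+1)/2) = \tfrac{d-1}{2}\Gamma((d-1)/2)^2$, using $\Gamma((d+1)/2) = \tfrac{d-1}{2}\Gamma((d-1)/2)$. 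Substituting produces the first lower bound in the statement. The simpler second lower bound reduces to verifying that $(1-2\epsilon^2)/(1-\epsilon^2)^{(d-1)/2} \geq 1/\sqrt{3}$ for all $d \geq 2$ and $\epsilon \in (0,1/2]$; an elementary one-variable calculus check (examining the sign of the derivative) shows the worst case is $(d,\epsilon) = (2,1/2)$, at which equality holds.

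Nothing here is conceptually deep; the lemma is the classical sphere-packing estimate and the argument is textbook. The main obstacle is bookkeeping — pairing each packing/covering inequality with the correct direction of the gamma-function bound (Wallis upstairs, log-convexity downstairs), and faithfully tracking the factor $\sqrt{1-(4\epsilon^2-4\epsilon^4)}$ through the covering step so as to land on exactly the stated constants.
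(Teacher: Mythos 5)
Your proposal is correct and follows essentially the same route as the paper: the upper bound via disjoint chord-radius-$\epsilon$ caps and the lower bound via the maximal-packing-is-a-covering argument with chord-radius-$2\epsilon$ caps, with the cap areas compared through the same integral (your $\rho=\sin\theta$ parametrisation is the paper's incomplete beta integral after $t=\rho^2$) and the same beta/gamma-ratio bounds $B\bigl(\tfrac{d-1}{2},\tfrac12\bigr)\leq \pi(d-1)^{-1/2}$ and $\geq (2\pi)^{1/2}(d-1)^{-1/2}$. The only difference is that you supply proofs (Wallis identity, log-convexity) for the gamma-ratio bounds that the paper simply asserts, and you spell out the elementary outer inequalities; both are fine.
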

\begin{proof}
Let $x_1,\ldots,x_{N_{2\epsilon}}$ denote a packing set of $\mathcal{S}_1$ at distance $2\epsilon$.  For $j = 1,\ldots,N_{2\epsilon}$, define the hyperplane $\mathcal{H}_j := \{x \in \mathbb{R}^d:(x_j)^T x = 1-\epsilon^2/2\}$, and let 
\[
\tilde{x}_j := \argmin_{x \in \mathcal{H}_j} \|x\| = (1-\epsilon^2/2)x_j.
\]
Notice that for any $x \in \mathcal{H}_j \cap \mathcal{S}_1$, we have 
\begin{align}
\label{Eq:Disjoint}
\|x - x_j\|^2 &= \|x - \tilde{x}_j\|^2 + \epsilon^4/4 \nonumber \\
&= \|x\|^2 - 2(1-\epsilon^2/2)(x_j)^Tx + \|\tilde{x}_j\|^2 + \epsilon^4/4 = \epsilon^2.
\end{align}
Let $\mathcal{H}_j^+$ and $\mathcal{H}_j^-$ denote the disjoint, open halfspaces separated by $\mathcal{H}_j$, where $\mathcal{H}_j^-$ contains the origin in $\mathbb{R}^d$, and let $\mathcal{C}_j := \mathcal{H}_j^+ \cap \mathcal{S}_1$ denote the corresponding spherical cap.  Then, by~(\ref{Eq:Disjoint}), $\mathcal{C}_1,\ldots,\mathcal{C}_{N_{2\epsilon}}$ are disjoint.  Comparing the surface areas of $\cup_{j=1}^{N_{2\epsilon}} \mathcal{C}_j$ and $\mathcal{S}_1$, we deduce that
\[
N_{2\epsilon} \int_0^{\epsilon^2 - \epsilon^4/4} t^{\frac{d-1}{2} -1}(1-t)^{-1/2} \, dt \leq 2B\Bigl(\frac{d-1}{2},\frac{1}{2}\Bigr)
\]
where $B(\frac{d-1}{2},\frac{1}{2}) := \int_0^1 t^{\frac{d-1}{2}-1}(1-t)^{-1/2} \, dt$ denotes the beta function at $(\frac{d-1}{2},\frac{1}{2})$.  Since $B(\frac{d-1}{2},\frac{1}{2}) \leq \pi (d-1)^{-1/2}$ and $(1-t)^{-1/2} \geq 1$ for $t \in [0,1)$, the upper bound for $N_{2 \epsilon}$ follows.
 
For the lower bound, observe that for any $x \in \mathcal{S}_1$, we can find $j^* \in \{1,\ldots,N_{2\epsilon}\}$ such that $\|x - x_{j^*}\| \leq 2\epsilon$.  Thus, if for $j = 1,\ldots,N_{2\epsilon}$, we let 
\[
\tilde{\mathcal{C}}_j := \{x \in \mathcal{S}_1:\|x - x_j\| \leq 2 \epsilon\},
\]
then $\cup_{j=1}^{N_{2\epsilon}} \tilde{\mathcal{C}}_j = \mathcal{S}_1$.  We deduce that
\[
N_{2\epsilon}\int_0^{4\epsilon^2 - 4\epsilon^4} t^{\frac{d-1}{2} -1}(1-t)^{-1/2} \, dt \geq 2B\Bigl(\frac{d-1}{2},\frac{1}{2}\Bigr).
\]
Since $B(\frac{d-1}{2},\frac{1}{2}) \geq (2\pi)^{1/2}(d-1)^{-1/2}$ and $(1-t)^{-1/2} \leq \{1 - (4\epsilon^2 - 4\epsilon^4)\}^{-1/2}$ for $t \in [0,4\epsilon^2 - 4\epsilon^4]$, the lower bound follows.
\end{proof}

\subsubsection{Auxiliary results for the proof of Theorem~\ref{Thm:BracketingBounds}}

We first provide the following entropy bound for convex sets, which is a minor extension of \citet[][Corollary~8.4.2]{Dudley1999}.  For a $d$-dimensional, closed, convex set $D \subseteq \mathbb{R}^d$, we write $\mathcal{A}_d(D)$ for the class of closed, convex subsets of $D$.  Further, and in a slight abuse of notation, we let $N_{[]}(\epsilon,\mathcal{A}_d(D),L_1)$ denote the $\epsilon$-bracketing number of $\{\mathbbm{1}_A:A \in \mathcal{A}_d(D)\}$ in the $L_1 = L_1(\mu_d)$-metric.  Recall also that we write $\log_{++}(x) = \max(1,\log x)$.
\begin{prop}
\label{Prop:Dudley}
For each $d \in \mathbb{N}$, there exists $K_d \in (0,\infty)$, depending only on $d$, such that 
\[
\log N_{[]}\bigl(\epsilon,\mathcal{A}_d(D),L_1\bigr) \leq K_d\max\biggl\{\log_{++}\Bigl(\frac{\mu_d(D)}{\epsilon}\Bigr),\Bigl(\frac{\mu_d(D)}{\epsilon}\Bigr)^{(d-1)/2}\biggr\}
\]
for all $\epsilon > 0$.
\end{prop}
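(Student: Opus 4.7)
The plan is to reduce, by affine equivariance, to a setting in which $D$ has bounded geometry, and then to turn the Hausdorff metric entropy bound for convex bodies that underlies Corollary~8.4.2 of \citet{Dudley1999} into a bracketing bound via inner and outer Minkowski parallel bodies. The key observation is that the dimensionless quantity $\mu_d(D)/\epsilon$ is preserved under the relevant rescalings.

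First, for any invertible affine map $T$ on $\mathbb{R}^d$, the map $A \mapsto T(A)$ is a bijection from $\mathcal{A}_d(D)$ to $\mathcal{A}_d(T(D))$ satisfying $\mu_d(T(A) \triangle T(B)) = |\det T|\,\mu_d(A \triangle B)$, so $\log N_{[]}(\epsilon,\mathcal{A}_d(D),L_1) = \log N_{[]}(|\det T|\epsilon,\mathcal{A}_d(T(D)),L_1)$ while $\mu_d(D)/\epsilon$ is invariant. Using John's theorem, I may therefore assume that $\bar{B}_d(0,r_d) \subseteq D \subseteq \bar{B}_d(0,R_d)$ for constants $r_d,R_d > 0$ depending only on $d$. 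In this normalised setting, $\mu_d(D)$ and the quermassintegrals $W_k(D)$ are bounded above and below by constants depending only on $d$, so it suffices to prove the asserted bound with $\mu_d(D)/\epsilon$ replaced by $1/\epsilon$. The case $d=1$ is then direct: every $A \in \mathcal{A}_1(D)$ is a closed subinterval of $D$, and discretising its two endpoints on a uniform grid of mesh $\epsilon/2$ yields a bracketing set of cardinality $O(\epsilon^{-2})$, giving the logarithmic bound.

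For $d \geq 2$, I would invoke the Bronshtein bound (cf.\ the proof of Corollary~8.4.2 in \citet{Dudley1999}) to produce a Hausdorff $\delta$-net $\mathcal{N}_\delta \subseteq \mathcal{A}_d(D)$ with $\log|\mathcal{N}_\delta| \leq c_d\,\delta^{-(d-1)/2}$. For each $B \in \mathcal{N}_\delta$, set
\[
B^L := B \ominus \bar{B}_d(0,\delta), \qquad B^U := \bigl(B \oplus \bar{B}_d(0,\delta)\bigr) \cap D,
\]
which are both closed and convex. If $A \in \mathcal{A}_d(D)$ and $B \in \mathcal{N}_\delta$ satisfy $d_H(A,B) \leq \delta$, then the inclusion $A \subseteq B \oplus \bar{B}_d(0,\delta)$ gives $A \subseteq B^U$, while the convexity of $A$ combined with $B \subseteq A \oplus \bar{B}_d(0,\delta)$ gives $B \ominus \bar{B}_d(0,\delta) \subseteq A$, i.e.\ $B^L \subseteq A$. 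Hence $\{(\mathbbm{1}_{B^L},\mathbbm{1}_{B^U}): B \in \mathcal{N}_\delta\}$ is a bracketing set for $\{\mathbbm{1}_A: A \in \mathcal{A}_d(D)\}$.

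It remains to bound $\mu_d(B^U \setminus B^L)$ uniformly in $B$. Applying the Steiner formula gives
\[
\mu_d\bigl(B \oplus \bar{B}_d(0,\delta)\bigr) - \mu_d(B) = \sum_{k=1}^d \binom{d}{k} W_k(B)\, \delta^k,
\]
and the same expression bounds $\mu_d(B) - \mu_d(B^L)$. By the monotonicity of quermassintegrals (e.g.\ via the Cauchy projection formula), $W_k(B) \leq W_k(D)$ for each $k$, and under the John normalisation the right-hand side is bounded by a constant depending only on $d$; hence $\mu_d(B^U \setminus B^L) \leq C_d\,\delta$. Choosing $\delta := \epsilon/C_d$ yields a bracketing set of $L_1$-diameter at most $\epsilon$ and log-cardinality at most $c_d(C_d/\epsilon)^{(d-1)/2}$, which, after undoing the John normalisation, is the required bound. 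The main technical step is the uniform control of the quermassintegrals of arbitrary convex subsets of $D$; this is exactly what the John normalisation buys, and it is what turns a $\delta$-level Hausdorff approximation into an $O(\delta)$-level $L_1$ approximation with constants depending only on $d$.
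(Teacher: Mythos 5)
Your proposal is correct in outline and shares the paper's overall architecture (affine reduction via John's theorem so that $d^{-1}\bar{B}_d(0,1)\subseteq D'\subseteq \bar{B}_d(0,1)$, followed by an entropy bound for convex subsets of a ball, with the dimensionless ratio $\mu_d(D)/\epsilon$ carried through the rescaling), but it differs in how the key unit-ball bound is obtained. The paper simply quotes the $L_1$ bracketing bound for $\mathcal{A}_d(\bar{B}_d(0,1))$ from \citet{Dudley1999} (Corollary~8.4.2 and the remark preceding it) and spends its effort on the scaling, the $\log_{++}$ bookkeeping and the large-$\epsilon$ regime; you instead re-derive that bound from the Hausdorff metric entropy result of \citet{BronshteynIvanov1975}, converting a Hausdorff $\delta$-net into $L_1$ brackets via the inner and outer parallel bodies $B\ominus\delta\bar{B}_d(0,1)$ and $(B\oplus\delta\bar{B}_d(0,1))\cap D$. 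This is a legitimate and more self-contained route: the bracket validity is fine (the inclusion $B\subseteq A\oplus\delta\bar{B}_d(0,1)\Rightarrow B\ominus\delta\bar{B}_d(0,1)\subseteq A$ follows from a support-function/cancellation argument for closed convex $A$), and it makes explicit the convex-geometry content that Dudley's result packages up.

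One step is stated too loosely: the Steiner formula controls $\mu_d\bigl(B\oplus\delta\bar{B}_d(0,1)\bigr)-\mu_d(B)$, but there is no Steiner formula for inner parallel bodies, so ``the same expression bounds $\mu_d(B)-\mu_d(B^L)$'' is not literally justified. The claim is nevertheless true and standard: since $\{x\in B:\mathrm{dist}(x,\mathrm{bd}\,B)>\delta\}\subseteq B\ominus\delta\bar{B}_d(0,1)$, the coarea formula applied to the distance-to-boundary function together with monotonicity of surface area for nested convex bodies gives $\mu_d(B)-\mu_d\bigl(B\ominus\delta\bar{B}_d(0,1)\bigr)\leq \delta\,S(B)\leq C_d\delta$, and this argument also covers the degenerate case in which the erosion is empty (where a naive containment of $B$ in a dilate of $B^L$ fails, e.g.\ for thin or sharply pointed $B$). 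With that substitution, plus the routine points you elide (taking the Hausdorff net inside $\mathcal{A}_d(D)$ by a maximal-separated-set argument, the trivial single bracket for $\epsilon\gtrsim\mu_d(D)$ yielding the $\log_{++}$ form, and the explicit $d=1$ grid count), your argument gives the stated bound.
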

\begin{proof}
By Fritz John's theorem \citep[][p.~13]{John1948,Ball1997}, there exist $A \in \mathbb{R}^{d \times d}$ and $b \in \mathbb{R}^d$ such that $D' := AD+b$ has the property that $d^{-1}\bar{B}_d(0,1) \subseteq D' \subseteq \bar{B}_d(0,1)$.  Let $a_d := \mu_d\bigl(\bar{B}_d(0,1)\bigr) = \pi^{d/2}/\Gamma(1+d/2)$.  Now, by \citet[][Corollary~8.4.2]{Dudley1999} and the remark immediately preceding it, there exists $\epsilon_{20,d} \in \bigl(0,\min(e^{-1},a_d)\bigr)$ and $\check{\check{K}}_d \in (0,\infty)$ such that 
\[
\log N_{[]}\bigl(\epsilon,\mathcal{A}_d(D'),L_1\bigr) \leq \log N_{[]}\bigl(\epsilon,\mathcal{A}_d(\bar{B}_d(0,1)),L_1\bigr) \leq
\check{\check{K}}_d\max\{\log(1/\epsilon),\epsilon^{-(d-1)/2}\}
\]
for all $\epsilon \in (0,\epsilon_{20,d}]$.  Now set 
\[
\check{K}_d := \check{\check{K}}_d\frac{\max\{\log(1/\epsilon_{20,d}),\epsilon_{20,d}^{-(d-1)/2}\}}{\max\{\log_{++}(1/a_d),a_d^{-(d-1)/2}\}}.  
\]
Then, for $\epsilon \in (\epsilon_{20,d},a_d)$,
\begin{align*}
\log N_{[]}\bigl(\epsilon,\mathcal{A}_d(D'),L_1\bigr) &\leq \log N_{[]}\bigl(\epsilon_{20,d},\mathcal{A}_d(D'),L_1\bigr) \leq \check{\check{K}}_d\max\{\log(1/\epsilon_{20,d}),\epsilon_{20,d}^{-(d-1)/2}\} \\
&= \check{K}_d\max\{\log_{++}(1/a_d),a_d^{-(d-1)/2}\} \leq \check{K}_d\max\{\log_{++}(1/\epsilon),\epsilon^{-(d-1)/2}\}.
\end{align*}
For $\epsilon \geq a_d$, we can use the single bracketing pair $\{\psi^L,\psi^U\}$ with $\psi^L(x) := 0$ and $\psi^U(x) := 1$ for $x \in D'$, noting that $L_1(\psi^U,\psi^L) = \mu_d(D') \leq a_d$.  Thus, for $\epsilon \geq a_d$,  
\[
\log N_{[]}\bigl(\epsilon,\mathcal{A}_d(D'),L_1\bigr) = 0 \leq \check{K}_d\max\{\log_{++}(1/\epsilon),\epsilon^{-(d-1)/2}\}.
\]
We can therefore construct an $\epsilon$-bracketing set in $L_1$ for $\{\mathbbm{1}_A:A \in \mathcal{A}_d(D)\}$ as follows: first find an $\frac{\epsilon a_d}{d^d\mu_d(D)}$-bracketing set $\{[\psi_j^L,\psi_j^U]:j=1,\ldots,N\}$ for $\{\mathbbm{1}_A:A \in \mathcal{A}_d(D')\}$, where 
\[
\log N \leq \check{K}_d\max\biggl\{\log_{++}\Bigl(\frac{d^d\mu_d(D)}{\epsilon a_d}\Bigr),\Bigl(\frac{d^d\mu_d(D)}{\epsilon a_d}\Bigr)^{(d-1)/2}\biggr\}. 
\]
Now define $\phi_j^L,\phi_j^U:D \rightarrow \mathbb{R}$ by $\phi_j^L(x) := \psi_j^L(Ax+b)$ and $\phi_j^U(x) := \psi_j^U(Ax+b)$.  Then
\begin{align*}
L_1(\phi_j^U,\phi_j^L) &= \int_D |\psi_j^U(Ax+b) - \psi_j^L(Ax+b)| \, d\mu_d(x) \\
&\leq \frac{\epsilon a_d}{|\det A|d^d\mu_d(D)} = \frac{\epsilon a_d}{d^d\mu_d(D')} \leq \frac{\epsilon a_d}{d^d\mu_d\bigl(d^{-1}\bar{B}_d(0,1)\bigr)} = \epsilon.  
\end{align*}
Since $\log_{++}(a/\epsilon) \leq \bigl\{2 + \frac{2\log_{++}(a)}{\log_{++}(e/a)}\bigr\}\log_{++}(1/\epsilon)$ for all $a,\epsilon > 0$, the result therefore holds with
\[
K_d := \check{K}_d\max\biggl\{\biggl(2 + \frac{2\log_{++}(d^d/a_d)}{\log_{++}(ea_d/d^d)}\biggr) \, , \, \frac{d^{d(d-1)/2}}{a_d^{(d-1)/2}}\biggr\}.
\]
\end{proof}
We now provide a bracketing entropy bound for classes of uniformly bounded concave functions on arbitrary domains in $[0,1]^d$ when $d = 1,2,3$.  These results build on the work of \citet{GuntuboyinaSen2013}, who study metric (as opposed to bracketing) entropy and rectangular domains, and a recent result of \citet{GaoWellner2015}, who study various special classes of domains, including $d$-dimensional simplices.  For convenience, we state the result to which we will appeal below.

Recall that we say $\mathcal{S} \subseteq \mathbb{R}^d$ is a \emph{$d$-dimensional simplex} if there exist affinely independent vectors $u_0,u_1,\ldots,u_d \in \mathbb{R}^d$ such that
\[
\mathcal{S} = \biggl\{u_0 + \sum_{j=1}^d \lambda_j u_j:\lambda_1,\ldots,\lambda_d \geq 0, \, \sum_{j=1}^d \lambda_j \leq 1\biggr\}.
\]
A set $D \subseteq \mathbb{R}^d$ can be \emph{triangulated into simplices} if there exist $d$-dimensional simplices $S_1,\ldots,S_N \subseteq D$ such that $\cup_{j=1}^N S_j = D$ and if $j \neq k$ then there is a common (possibly empty) face $F$ of the boundaries of $S_j$ and $S_k$ with $S_j \cap S_k = F$.  For a $d$-dimensional, closed, convex subset $D$ of $\mathbb{R}^d$, and for $B > 0$, we define $\bar{\Phi}_B(D)$ to be the set of upper semi-continuous, concave functions $\phi$ with $\mathrm{dom}(\phi) = D$ that are bounded in absolute value by $B$. 
\begin{thm}[\citet{GaoWellner2015}, Theorem~1.1(ii)]
\label{Thm:Simplices}
For each $d \in \mathbb{N}$, there exists $K_d^{**} \in (0,\infty)$, depending only on $d$, such that if $D$ is a $d$-dimensional closed, convex subset of $\mathbb{R}^d$ that can be triangulated into $m$ simplices, then
\[
\log N_{[]}\bigl(2\epsilon,\bar{\Phi}_B(D),L_2\bigr) \leq K_d^{**}m\biggl(\frac{B\mu_d^{1/2}(D)}{\epsilon}\biggr)^{d/2}
\]
for all $\epsilon > 0$.
\end{thm}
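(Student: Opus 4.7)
The plan is to break the argument into a single-simplex estimate and a gluing step over the triangulation. In the single-simplex stage, I would exploit the affine equivariance of concavity by mapping an arbitrary $d$-simplex $S$ of volume $V$ onto the standard simplex $\Delta^d$ of volume $1/d!$ via an affine $T$. Pulling $\phi \in \bar{\Phi}_B(S)$ back to $\tilde\phi := \phi \circ T^{-1} \in \bar{\Phi}_B(\Delta^d)$ gives $\|\phi_1 - \phi_2\|_{L_2(S)} = (d! V)^{1/2} \|\tilde\phi_1 - \tilde\phi_2\|_{L_2(\Delta^d)}$, so an $\epsilon/(d! V)^{1/2}$-bracketing on $\Delta^d$ transports to an $\epsilon$-bracketing on $S$. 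The remaining subgoal on $\Delta^d$ is the scale-invariant estimate
\[
\log N_{[]}\bigl(\epsilon', \bar{\Phi}_B(\Delta^d), L_2\bigr) \leq K (B/\epsilon')^{d/2},
\]
which, after pulling back, produces $K'\bigl(B\mu_d^{1/2}(S)/\epsilon\bigr)^{d/2}$ on $S$.

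To prove the bound on $\Delta^d$ I would follow the strategy used by \citet{GuntuboyinaSen2013} on rectangles and extended by \citet{GaoWellner2015}: subdivide $\Delta^d$ (for instance by repeated barycentric or Kuhn-type subdivision) into $O(\delta^{-d})$ congruent sub-simplices at a scale $\delta$, and on each sub-simplex construct brackets by combining an upper envelope taken from a supporting hyperplane to $\phi$ at a chosen interior point with a lower envelope given by the affine interpolant at the vertex values. Because $|\phi|\leq B$ and $\phi$ is concave, the pointwise gap between the envelopes is $O(B)$, so the integrated squared error per piece is $O(B^2 \delta^d)$. Balancing this against the discretization resolution of the affine envelopes gives $\epsilon' \asymp B \delta^{1/2}$ and a total log-cardinality of the desired order $(B/\epsilon')^{d/2}$.

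For the full triangulation $D = \bigcup_{j=1}^m S_j$, I would allocate the error budget across the pieces by setting $\epsilon_j := \epsilon\sqrt{\mu_d(S_j)/\mu_d(D)}$, so that $\sum_{j=1}^m \epsilon_j^2 = \epsilon^2$. The single-simplex bound then yields log-cardinality $K_d\bigl(B\mu_d^{1/2}(S_j)/\epsilon_j\bigr)^{d/2} = K_d\bigl(B\mu_d^{1/2}(D)/\epsilon\bigr)^{d/2}$ uniformly in $j$. Concatenating one bracket per simplex produces a bracket for $\bar{\Phi}_B(D)$ of $L_2$-width at most $\epsilon$ (the factor of $2$ in the stated conclusion accommodates any slack needed on common faces of adjacent simplices), with total log-cardinality $m K_d\bigl(B\mu_d^{1/2}(D)/\epsilon\bigr)^{d/2}$, as required.

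The principal technical obstacle is the single-simplex bound, specifically achieving the sharp exponent $(B/\epsilon')^{d/2}$ without any Lipschitz hypothesis on $\phi$. Bounded concave functions on $\Delta^d$ can be arbitrarily steep near the boundary, and a naive Lipschitz-based count would yield only the looser $(B/\epsilon')^d$. The key is to exploit $L_2$ averaging: where the supporting-hyperplane slopes blow up near $\partial \Delta^d$, the shell volume vanishes correspondingly, and it is precisely this cancellation that has to be quantified when controlling the envelope gap near the boundary. Once this is in hand on $\Delta^d$, both the affine normalization and the allocation across simplices in the triangulation are essentially bookkeeping.
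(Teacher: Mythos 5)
First, note that the paper does not prove this statement at all: it is imported verbatim as Theorem~1.1(ii) of \citet{GaoWellner2015}, so what you are really attempting is a reproof of that theorem. Your outer scaffolding is fine: the affine normalisation of a single simplex (with the Jacobian factor $(d!\,V)^{1/2}$ transferred into $\epsilon$) and the allocation $\epsilon_j := \epsilon\{\mu_d(S_j)/\mu_d(D)\}^{1/2}$ across the $m$ simplices, followed by piecewise concatenation of brackets (the faces have measure zero, so the squared widths simply add to $\epsilon^2$), is exactly the bookkeeping that produces the factor $m$ and the $\mu_d^{1/2}(D)$ scaling in the stated bound.

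The genuine gap is the single-simplex estimate $\log N_{[]}(\epsilon',\bar{\Phi}_B(\Delta^d),L_2) \lesssim (B/\epsilon')^{d/2}$, which is the entire content of the Gao--Wellner theorem, and your sketched mechanism does not deliver it. If you subdivide $\Delta^d$ into $O(\delta^{-d})$ congruent sub-simplices and bracket on each piece between a supporting hyperplane and the vertex interpolant, then with your own balancing $\epsilon' \asymp B\delta^{1/2}$ you have $\delta \asymp (\epsilon'/B)^2$, and the number of cells alone is $\delta^{-d} \asymp (B/\epsilon')^{2d}$; since each cell contributes at least a constant to the log-cardinality (and in fact more, because the admissible affine envelopes must themselves be discretised, with slopes as large as $B/\mathrm{dist}(\cdot,\partial\Delta^d)$), this route gives at best $\log N \gtrsim (B/\epsilon')^{2d}$, which is far weaker than $(B/\epsilon')^{d/2}$. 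The exponent $d/2$ is not obtainable from piecewise-affine envelope counting on a uniform mesh: it comes from Bronshtein-type approximation of convex bodies by polytopes with few vertices \citep[cf.][]{BronshteynIvanov1975,vanderVaartWellner1996}, giving $(1+L)^{d/2}\epsilon^{-d/2}$ for uniformly Lipschitz concave functions, combined with a multiscale decomposition of the domain into shells at dyadic distances from the boundary, on which the local Lipschitz constant $\sim B/(\text{distance})$ is inserted and the resulting entropies are summed; this is precisely the argument of \citet{GuntuboyinaSen2013} for metric entropy on rectangles and of \citet{GaoWellner2015} for bracketing entropy on simplices and polytopes. Your closing paragraph correctly identifies that the boundary blow-up versus vanishing volume trade-off is the crux, but the proposal neither quantifies it nor invokes the Bronshtein-type input, so the key inequality remains unproved.
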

We also require one further preliminary lemma.  For any $d$-dimensional, compact, convex set $D \subseteq \mathbb{R}^d$ and any $\eta \geq 0$, let
\[
\tensor*[_{\eta}]{D}{} := \{x \in D: w \in D \text{ for all } \|w - x\| \leq \eta\}, \quad \text{and} \quad D^{\eta]} := D + \eta \bar{B}_d(0,1).
\]
Some basic properties of the sets $\tensor*[_{\eta}]{D}{}$ and $D^{\eta]}$ are given below.
\begin{lemma}
\label{Lemma:InnerOuter}
Let $D$, $\tensor*[_{\eta}]{D}{}$ and $D^{\eta]}$ be as above.  Then
\begin{enumerate}[(i)]
\item $\tensor*[_{\eta}]{D}{}$ and $D^{\eta]}$ are compact and convex.
\item If $0 \leq \eta_1 \leq \eta_2$, then $(\tensor*[_{\eta_1}]{D}{})^{\eta_2]} \subseteq D^{(\eta_2 - \eta_1)]}$ and $\tensor*[_{\eta_1}]{{(D^{\eta_2]})}}{} = D^{(\eta_2 - \eta_1)]}$.
\item If $\eta_1, \eta_2 > 0$, then $\tensor*[_{\eta_1}]{{(\tensor*[_{\eta_2}]{D}{})}}{} = \tensor*[_{\eta_1 +\eta_2}]{D}{}$ and $(D^{\eta_1]})^{\eta_2]} = D^{(\eta_1 + \eta_2)]}$.
\item If, in addition, $D$ is a polyhedral convex set, so that we can write $D = \cap_{j=1}^m \{x : b_j^T x \leq \beta_j\}$ for some $m \in \mathbb{N}$, some distinct $b_1,\ldots,b_m \in \mathbb{R}^d$ with $\|b_j\| = 1$ for each $j$, and some $\beta_1,\ldots,\beta_m \in \mathbb{R}$, then $\tensor*[_{\eta}]{D}{} = \cap_{j=1}^m \{x : b_j^T x \leq \beta_j - \eta\}$.
\end{enumerate}
\end{lemma}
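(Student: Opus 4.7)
The plan is to exploit the equivalent representations
\[
D^{\eta]} = D + \bar{B}_d(0,\eta), \qquad \tensor*[_{\eta}]{D}{} = \{x \in \mathbb{R}^d : x + \bar{B}_d(0,\eta) \subseteq D\} = \bigcap_{\|y\| \leq \eta}(D - y),
\]
and then to dispatch (i), (iii), (iv) by routine Minkowski-addition bookkeeping, reserving the real work for (ii).

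For (i), $D^{\eta]}$ is the Minkowski sum of the compact convex set $D$ with a compact convex ball, hence itself compact and convex; and the intersection formula for $\tensor*[_{\eta}]{D}{}$ exhibits it as a closed convex subset of $D$, hence compact. Part (iii) follows from associativity of Minkowski sum combined with $\bar{B}_d(0,\eta_1)+\bar{B}_d(0,\eta_2)=\bar{B}_d(0,\eta_1+\eta_2)$ for the outer identity, and by unfolding the nested definition for the inner one: the condition $x + \bar{B}_d(0,\eta_1) \subseteq \tensor*[_{\eta_2}]{D}{}$ is identical to $x + \bar{B}_d(0,\eta_1+\eta_2) \subseteq D$. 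For (iv), $x + w \in \bigcap_j \{z : b_j^T z \leq \beta_j\}$ holds for every $\|w\| \leq \eta$ if and only if, for each $j$, $b_j^T x + \sup_{\|w\| \leq \eta} b_j^T w \leq \beta_j$; and the supremum equals $\eta$ because $\|b_j\| = 1$, attained at $w = \eta b_j$.

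The main content is (ii). The inclusion $(\tensor*[_{\eta_1}]{D}{})^{\eta_2]} \subseteq D^{(\eta_2-\eta_1)]}$ is obtained by splitting a perturbation: given $y \in \tensor*[_{\eta_1}]{D}{}$ and $\|w\| \leq \eta_2$, decompose $w = w_1 + w_2$ with $w_1$ pointing in the direction of $w$ and of length $\min(\eta_1,\|w\|)$, so that $y + w_1 \in D$ and $\|w_2\| \leq \eta_2 - \eta_1$, giving $y + w \in D^{(\eta_2-\eta_1)]}$. The equality $\tensor*[_{\eta_1}]{(D^{\eta_2]})}{} = D^{(\eta_2-\eta_1)]}$ is the heart of the lemma. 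The inclusion $\supseteq$ is immediate: if $x = y + z$ with $y \in D$ and $\|z\| \leq \eta_2 - \eta_1$, then for every $\|w\| \leq \eta_1$ we have $x + w = y + (z+w) \in D^{\eta_2]}$.

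The hard part is the reverse inclusion. Suppose, for a contradiction, that $x \in \tensor*[_{\eta_1}]{(D^{\eta_2]})}{}$ but $x \notin D^{(\eta_2-\eta_1)]}$. Let $y$ be the metric projection of $x$ onto the closed convex set $D$, so that $\|x-y\| > \eta_2 - \eta_1$ and, writing $v := (x-y)/\|x-y\|$, the standard obtuse-angle characterisation of the projection gives $v^T(z - y) \leq 0$ for all $z \in D$. Since $\|\eta_1 v\| = \eta_1$, the point $x + \eta_1 v$ must lie in $D^{\eta_2]}$, so there is $z^* \in D$ with $\|x+\eta_1 v - z^*\| \leq \eta_2$; but
\[
\|x + \eta_1 v - z^*\| \geq v^T(x+\eta_1 v - z^*) = \|x - y\| + \eta_1 + v^T(y - z^*) \geq \|x-y\| + \eta_1 > \eta_2,
\]
a contradiction. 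This projection/supporting-hyperplane step is the only point in the proof that requires more than direct manipulation of the definitions.
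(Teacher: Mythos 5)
Your proof is correct, and for the one genuinely nontrivial step --- the inclusion $\tensor*[_{\eta_1}]{{(D^{\eta_2]})}}{} \subseteq D^{(\eta_2-\eta_1)]}$ in (ii) --- it uses the same idea as the paper: project $x$ onto $D$, push out by $\eta_1$ along the outward direction, and invoke the obtuse-angle characterisation of the metric projection (the paper phrases this directly by showing the projection of the pushed-out point is unchanged, you phrase it as a contradiction, but it is the same argument). Your treatment of (i), (iii) and (iv) via the Minkowski-sum and intersection representations is a harmless streamlining of the paper's element-by-element verifications.
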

\begin{proof}
\emph{(i)} Certainly $\tensor*[_{\eta}]{D}{}$ is bounded because $\tensor*[_{\eta}]{D}{} \subseteq D$.  To show $\tensor*[_\eta]{D}{}$ is closed, let $(x_n) \in \tensor*[_\eta]{D}{}$ with $x_n \rightarrow x$, and suppose that $\|w - x\| \leq \eta$.  Then, setting $w_n := x_n + w - x$, we have $w_n \in D$ and $w_n \rightarrow w$, so $w \in D$ since $D$ is closed.  We conclude that $x \in \tensor*[_\eta]{D}{}$, as required.  To show $\tensor*[_{\eta}]{D}{}$ is convex, let $x_1, x_2 \in \tensor*[_{\eta}]{D}{}$ and $\lambda \in [0,1]$, and suppose that $\|w - \{(1-\lambda)x_1 + \lambda x_2\}\| \leq \eta$.  Define $w_1 := x_1 + w - (1-\lambda)x_1 - \lambda x_2 \in D$ and $w_2 := x_2 + w - (1-\lambda)x_1 - \lambda x_2 \in D$.  Then
\[
w = (1-\lambda)w_1 + \lambda w_2 \in D,
\]
so $(1-\lambda)x_1 + \lambda x_2 \in \tensor*[_{\eta}]{D}{}$, as required.  Thus $\tensor*[_{\eta}]{D}{}$ is compact and convex.

For the second part, $D^{\eta]}$ is bounded, because 
\[
\sup_{x \in D^{\eta]}} \|x\| = \sup_{y \in D,z \in \bar{B}_d(0,1)} \|y + \eta z\| \leq \sup_{y \in D} \|y\| + \eta < \infty.
\]
Now suppose that $(x_n)$ is a sequence in $D^{\eta]}$ with $x_n \rightarrow x$, so we can write $x_n = y_n + \eta z_n$, where $y_n \in D$ and $\|z_n\| \leq 1$.  Since $D$ and $\bar{B}_d(0,1)$ are compact, there exist $y \in D$, $z \in \bar{B}_d(0,1)$ and integers $1 \leq n_1 < n_2 < \ldots$ such that $y_{n_k} \rightarrow y$ and $z_{n_k} \rightarrow z$.  By uniqueness of limits, $x = y + \eta z$, so $x \in D^{\eta]}$, which shows that $D^{\eta]}$ is closed.  Finally, if $x_1,x_2 \in D^{\eta]}$ and $\lambda \in [0,1]$, then we can find $y_1,y_2 \in D$ and $z_1,z_2 \in \bar{B}_d(0,1)$ such that $x_1 = y_1 + \eta z_1$ and $x_2 = y_2 + \eta z_2$.  But then since $D$ is convex and $\|(1-\lambda)z_1 + \lambda z_2\| \leq (1-\lambda)\|z_1\| + \lambda \|z_2\| \leq 1$, we have
\[
(1-\lambda)x_1 + \lambda x_2 = (1-\lambda)y_1 + \lambda y_2 + \eta \{(1-\lambda)z_1 + \lambda z_2\} \in D + \eta \bar{B}_d(0,1),
\]
so $D^{\eta]}$ is convex.   

\emph{(ii)} Let $x_0 \in (\tensor*[_{\eta_1}]{D}{})^{\eta_2]}$.  If $x_0 \in D$, then certainly $x_0 \in D^{(\eta_2 - \eta_1)]}$, so assume $x_0 \notin D$.  Then there exists $y_0 \in \tensor*[_{\eta_1}]{D}{}$ such that $\eta_1 < \|x_0 - y_0\| \leq \eta_2$, and 
\[
w := y_0 + \eta_1 \frac{(x_0 - y_0)}{\|x_0 - y_0\|} \in D.
\]
Moreover,
\[
\|w - x_0\| = \biggl\|y_0 - x_0 - \eta_1 \frac{(y_0 - x_0)}{\|y_0 - x_0\|}\biggr\| = \|y_0 - x_0\| - \eta_1 \leq \eta_2 - \eta_1.
\]
Hence $x_0 \in D^{(\eta_2 - \eta_1)]}$, so $(\tensor*[_{\eta_1}]{D}{})^{\eta_2]} \subseteq D^{(\eta_2 - \eta_1)]}$.

For the second part, suppose that $x \in \tensor*[_{\eta_1}]{{(D^{\eta_2]})}}{}$.  If $x \in D$, then $x \in D^{(\eta_2 - \eta_1)]}$ and we are done; otherwise, let $z$ denote the orthogonal projection of $x$ onto $D$.  Writing
\[
y := x + \eta_1 \frac{(x-z)}{\|x-z\|} = z + (x-z)\frac{\|x-z\| + \eta_1}{\|x-z\|},
\]
we have that $\|y - x\| = \eta_1$, so $y \in D^{\eta_2]}$.  Moreover, for every $t \in D$,
\[
(y-z)^T(t-z) = \frac{\|x-z\| + \eta_1}{\|x-z\|}(x-z)^T(t-z) \leq 0,
\]
so $z$ is the orthogonal projection of $y$ onto $D$.  We deduce that $\|x-z\| + \eta_1 = \|y - z\| \leq \eta_2$, so $x \in D^{(\eta_2 - \eta_1)]}$.

Conversely, let $x \in D^{(\eta_2 - \eta_1)]}$.  Then there exists $z \in D$ such that $\|x - z\| \leq \eta_2 - \eta_1$.  If $\|y - x\| \leq \eta_1$, then
\[
\|y - z\| \leq \|y - x\| + \|x - z\| \leq \eta_2,
\]
so $y \in D^{\eta_2]}$.  Hence $x \in \tensor*[_{\eta_1}]{{(D^{\eta_2]})}}{}$, as required.

\emph{(iii)} Let $x \in \tensor*[_{\eta_1}]{{(\tensor*[_{\eta_2}]{D}{})}}{}$, and let $\|z - x\| \leq \eta_1 + \eta_2$.  If $\|z - x\| \leq \eta_1$, then $z \in \tensor*[_{\eta_2}]{D}{} \subseteq D$; otherwise, $\eta_1 < \|z-x\| \leq \eta_1 + \eta_2$.  In that case,
\[
y := x + \eta_1\frac{z-x}{\|z-x\|}
\]
satisfies $\|y- x\| \leq \eta_1$, so $y \in \tensor*[_{\eta_2}]{D}{}$.  But then $\|z - y\| = \|z - x\| - \eta_1 \leq \eta_2$, so $z \in D$.  Hence $x \in \tensor*[_{\eta_1 +\eta_2}]{D}{}$.

Conversely, suppose that $x \in \tensor*[_{\eta_1 +\eta_2}]{D}{}$ and that $\|y - x\| \leq \eta_1$.  If $\|z - y\| \leq \eta_2$, then $\|z - x\| \leq \eta_1 + \eta_2$, so $z \in D$.  Hence $y \in \tensor*[_{\eta_2}]{D}{}$ and $x \in \tensor*[_{\eta_1}]{{(\tensor*[_{\eta_2}]{D}{})}}{}$, as required.

For the second part, let $x \in (D^{\eta_1]})^{\eta_2]}$.  Then there exists $y \in D^{\eta_1]}$ such that $\|y - x\| \leq \eta_2$, and $z \in D$ such that $\|z - y\| \leq \eta_1$.  But then $\|z - x\| \leq \eta_1 + \eta_2$, so $x \in D^{(\eta_1 + \eta_2)]}$.  

Conversely, suppose that $x \in D^{(\eta_1 + \eta_2)]}$, so there exists $z \in D$ such that $\|z - x\| \leq \eta_1 + \eta_2$.  If $x \in D^{\eta_1]}$, then certainly $x \in (D^{\eta_1]})^{\eta_2]}$; otherwise, we have $\|z - x\| > \eta_1$, and can set
\[
y := z + \eta_1\frac{x-z}{\|x-z\|}.
\]
In that case, $\|y - z\| = \eta_1$, so $y \in D^{\eta_1]}$, and $\|x- y\| = \|x - z\| - \eta_1 \leq \eta_2$, so $x \in (D^{\eta_1]})^{\eta_2]}$, as required.

\emph{(iv)} If $x \in \tensor*[_{\eta}]{D}{}$, then for each $j = 1,\ldots,m$, we have $w_j := x + \eta b_j \in D$.  Thus for each $j$,
\[
\beta_j \geq b_j^T w_j = b_j^T(x + \eta b_j) = b_j^T x + \eta,
\]
so $x \in \cap_{j=1}^m \{x : b_j^T x \leq \beta_j - \eta\}$.

Conversely, if $x \in \cap_{j=1}^m \{x : b_j^T x \leq \beta_j - \eta\}$ and $\|z\| \leq 1$, then by Cauchy--Schwarz,
\[
b_j^T(x + \eta z) \leq b_j^T x + \eta \leq \beta_j,
\]
so $x \in \tensor*[_{\eta}]{D}{}$.
\end{proof}
We are now in a position to state our bracketing entropy bound.
\begin{prop}
\label{Prop:BoundedBrackets}
There exists $K_d^\circ \in (0,\infty)$, depending only on $d$, such that for all $d$-dimensional, convex, compact sets $D \subseteq \mathbb{R}^d$ and all $B, \epsilon > 0$, we have
\[
\log N_{[]}\bigl(2\epsilon,\bar{\Phi}_B(D),L_2\bigr) \leq \left\{ \begin{array}{ll} K_1^\circ \mu_1^{1/4}(D)(B/\epsilon)^{1/2} & \mbox{if $d=1$} \\
K_2^\circ \mu_2^{1/2}(D)(B/\epsilon)\log_{++}^{3/2}(B\mu_2^{1/2}(D)/\epsilon) & \mbox{if $d=2$} \\
K_3^\circ \mu_3(D)(B/\epsilon)^2 & \mbox{if $d=3$.} \end{array} \right.
\]
\end{prop}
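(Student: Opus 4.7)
The case $d=1$ requires no work beyond Theorem~\ref{Thm:Simplices}: every compact, convex subset of $\mathbb{R}$ is an interval, hence a single $1$-simplex, and applying that theorem with $m=1$ yields the claimed bound directly. For the remainder of the argument I focus on $d \in \{2,3\}$. My first reduction is an affine normalization via John's theorem: since the $L_2$-norm of a function transforms by $|\det T|^{1/2}$ under an affine change of variables $x \mapsto Tx$, I may assume $D$ sits in ``John position'', with $\bar{B}_d(0,r) \subseteq D \subseteq \bar{B}_d(0,dr)$ and $r$ comparable to $\mu_d(D)^{1/d}$. This reduction is important because it lets me replace geometric quantities such as the diameter and the surface area of $D$ by powers of $\mu_d(D)^{1/d}$, which is exactly what appears in the target bound.

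The core of the argument is a multi-layer inner polyhedral approximation. I would construct a nested sequence $D \supseteq P_0 \supseteq P_1 \supseteq \cdots \supseteq P_K$ of convex polytopes with $P_k \subseteq \tensor*[_{\eta_k}]{D}{}$ for a decreasing sequence of depths $\eta_k$. By the Dudley--Bronshtein polytope approximation theorem, $P_k$ may be chosen with $O\bigl(\eta_k^{-(d-1)/2}\bigr)$ facets, and (crucially, and only for $d \leq 3$) the annular region $A_k := P_{k-1} \setminus P_k$ admits a triangulation into $n_k$ simplices whose count is controlled by the facet numbers of $P_{k-1}$ and $P_k$, by elementary discrete convex geometry (for instance by coning over matching facets or a shelling-type argument). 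On each simplex $S$ in the triangulation of $A_k$, I apply Theorem~\ref{Thm:Simplices} with $m=1$ to produce a bracketing set for the restriction of functions in $\bar{\Phi}_B(D)$ to $S$ at accuracy $\epsilon_{k,S}$. The outermost strip $D \setminus P_0$ is covered by the trivial bracket $[-B,B]$, contributing $L_2^2$-error of order $B^2 \mu_d(D \setminus P_0)$ and so tolerable provided $P_0$ is close enough to $D$; the innermost polytope $P_K$ is already a triangulated convex body, and Theorem~\ref{Thm:Simplices} applies on it directly.

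Combining these brackets by multiplication, the total log-bracketing number is bounded by
\[
\sum_{k=0}^{K} \sum_{S \in \mathrm{Tri}(A_k)} K_d^{**} \bigl(B \mu_d^{1/2}(S)/\epsilon_{k,S}\bigr)^{d/2},
\]
subject to $\sum_{k,S} \epsilon_{k,S}^2 \lesssim \epsilon^2$. I would then optimize by choosing $\eta_k$ on a geometric scale (yielding $K \asymp \log_{++}(B\mu_d^{1/2}(D)/\epsilon)$ layers) and distributing the error budget $\epsilon_{k,S}^2$ to approximately equalize the per-layer contribution, applying H\"older's inequality within each layer to pass from $\sum_S \mu_d^{d/4}(S)$ to $n_k^{1 - d/4}\mu_d^{d/4}(A_k)$ as needed. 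For $d=3$ the layer volumes $\mu_3(A_k)$ shrink geometrically, the resulting series converges, and the total is of order $\mu_3(D)(B/\epsilon)^2$. For $d=2$ the balance is tighter: the sum over the $\log_{++}$-many layers is what produces the extra $\log_{++}^{3/2}\bigl(B\mu_2^{1/2}(D)/\epsilon\bigr)$ factor in the statement.

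The step I expect to be the main obstacle is the discrete-geometric one: producing the nested polytopes \emph{together with} a compatible triangulation of each annulus whose simplex count is controlled simultaneously by the facet counts of the two bracketing polytopes. The relevant combinatorial facts (essentially, that one can triangulate the region between two nested convex polytopes without introducing Steiner points that blow up the simplex count) are specifically low-dimensional and have no clean analogue in $d \geq 4$, which is exactly the reason the authors remark that the method breaks down in higher dimensions.
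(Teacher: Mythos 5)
Your overall architecture is the same as the paper's: normalise by John's theorem, build nested polytopal layers inside $D$, triangulate the regions between successive layers using low-dimensional discrete geometry, apply Theorem~\ref{Thm:Simplices} simplex-by-simplex with a distributed $L_2$ error budget, and handle the thin leftover strip with a trivial bracket (your $d=1$ shortcut, Theorem~\ref{Thm:Simplices} with $m=1$, is fine; the paper instead quotes \citet{DossWellner2015}). The genuine problem is that your multiscale construction is oriented the wrong way round. You take the depths $\eta_k$ \emph{decreasing}, so the outermost polytope $P_0$ is the coarsest and the inner ones get finer (facet count $O(\eta_k^{-(d-1)/2})$ growing in $k$), and you justify convergence for $d=3$ by ``the layer volumes $\mu_3(A_k)$ shrink geometrically''. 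This cannot work: the trivial bracket on $D \setminus P_0$ costs $L_2^2 \asymp B^2 \mu_d(D\setminus P_0)$, which forces $P_0$ to approximate $D$ to Hausdorff accuracy of order $(\epsilon/B)^2$ after normalisation, i.e.\ $P_0$ must be the \emph{finest} layer, with of order $(B/\epsilon)^{d-1}$ vertices; if the layers then became finer still going inward, the innermost polytope $P_K$ --- which carries essentially all of $\mu_d(D)$ --- would be triangulated into an enormous number $m_K$ of simplices, and your final application of Theorem~\ref{Thm:Simplices} to $P_K$ contributes $m_K\bigl(B\mu_d^{1/2}(P_K)/\epsilon\bigr)^{d/2}$, far exceeding the target. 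The correct construction is the reverse: the outermost layer is the finest (accuracy $\asymp \epsilon^2$), successive inner polytopes are geometrically \emph{coarser}, so that the innermost polytope consists of $O(1)$ simplices; the annulus volumes are then only controlled by a bound of order $4^i\epsilon^2$, which \emph{grows} with the layer index, and the $d=3$ sum converges because the per-layer simplex counts decay like $4^{-i}\epsilon^{-2}$, fast enough to beat that growth --- not because the volumes shrink. Relatedly, the one-sided inclusion you impose (that $P_k$ lies inside the inner parallel set of $D$ at depth $\eta_k$) gives no lower bound on $P_0$ and so cannot control $\mu_d(D\setminus P_0)$; you need two-sided sandwiching, which comes from applying \citet{BronshteynIvanov1975} to the inner parallel set itself.

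A secondary point: triangulating the region between two nested convex polytopes in $\mathbb{R}^3$ with linearly many tetrahedra is not obtainable by ``coning over matching facets'' (the region is non-convex, and naive constructions produce Steiner points or super-linear counts); it is a genuine result of \citet{WangYang2000} (see also \citet{ChazelleShouraboura1995}), and its unavailability for $d \geq 4$ is indeed why the method stops at $d=3$. You flagged this correctly, so it is a matter of citation rather than a flaw, but it should not be described as elementary; in $d=2$ the paper also has to do a little work (cutting the annulus with a line and invoking polygon triangulation) to make the simplex count linear in the number of vertices.
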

\begin{proof}
As a preliminary, recall that the Hausdorff distance between two non-empty, compact subsets $A, B \subseteq \mathbb{R}^d$ is given by
\[
\mathrm{Haus}(A,B) := \max\biggl\{\sup_{x \in A} \inf_{y \in B} \|x-y\| \, , \, \sup_{y \in B} \inf_{x \in A} \|x - y\|\biggr\}.
\]
By the main result of \citet{BronshteynIvanov1975}, there exist $\delta_{\mathrm{BI},d} > 0$ and $C_d > 0$, both depending only on $d$, such that for every $\delta \in (0,\delta_{\mathrm{BI},d}]$ and every $d$-dimensional convex, compact set $D \subseteq \bar{B}_d(0,1)$, we can find a (convex) polytope $P \supseteq D$ such that $P$ has at most $C_d\delta^{-(d-1)/2}$ vertices and $\mathrm{Haus}(P,D) \leq \delta$.  (Throughout, we follow, e.g., \citet{Rockafellar1997}, and define a polytope to be a set formed as the convex hull of finitely many points.)  Moreover, by Lemma~8.4.3 of \citet{Dudley1999}, there exists $c_0 \in (0,16\delta_{\mathrm{BI},d}]$, depending only on $d$ (though this dependence is suppressed for notational simplicity), such that for any $d$-dimensional, closed convex set $D \subseteq \bar{B}_d(0,1)$ and any $\delta > 0$, we have $\mu_d(D \setminus \tensor*[_{c_0 \delta}]{D}{}) \leq \delta/16$. 

We now begin the main proof in the case $B=1$, and handle the general case at the end of the whole argument.  Fix a $d$-dimensional, convex, compact set $D \subseteq \mathbb{R}^d$, and, as in the proof of Proposition~\ref{Prop:Dudley}, apply Fritz John's theorem to construct an affine transformation $D' := AD+b$ of $D$ such that $d^{-1}\bar{B}_d(0,1) \subseteq D' \subseteq \bar{B}_d(0,1)$.  We initially find bracketing sets for $\bar{\Phi}_1(D')$, and consider different dimensions separately.

\emph{The case $d=1$}: This is an extension from metric to bracketing entropy of Theorem~3.1 of \citet{GuntuboyinaSen2013}, and can be found in \citet[][Proposition~4.1]{DossWellner2015}.  In particular, these authors show that there exist $\epsilon_1^\circ \in (0,1)$ and $K_{1,1}^\circ > 0$ such that, when $d=1$,
\[
\log N_{[]}\bigl(2\epsilon,\bar{\Phi}_1(D'),L_2\bigr) \leq K_{1,1}^\circ \epsilon^{-1/2} 
\]
for all $\epsilon \in (0,\epsilon_1^\circ]$.

\emph{The case $d=2$}: 
Set $\epsilon_2^\circ := 1/8$, and fix $\epsilon \in (0,\epsilon_2^\circ]$, noting that $\mu_2(D' \setminus \tensor*[_{c_0 \epsilon^2}]{{D'}}{}) \leq \epsilon^2/16$.  Applying the result of \citet{BronshteynIvanov1975}, we can find a polytope $P_1 \supseteq \tensor*[_{c_0 \epsilon^2}]{{D'}}{}$ such that $P_1$ has at most $C_2c_0^{-1/2}\epsilon^{-1}$ vertices and $\mathrm{Haus}(P_1,\tensor*[_{c_0 \epsilon^2}]{{D'}}{}) \leq c_0\epsilon^2$.  From this and the first part of Lemma~\ref{Lemma:InnerOuter}(ii), we deduce that $P_1 \subseteq (\tensor*[_{c_0 \epsilon^2}]{{D'}}{})^{c_0 \epsilon^2]} \subseteq D'$.  Applying the result of \citet{BronshteynIvanov1975} recursively, with $M := \big\lfloor \log\bigl(\frac{1}{4\epsilon}\bigr)/\log 2\big\rfloor$ (the condition that $\epsilon \leq 1/8$ ensures that $M \in \mathbb{N}$), for each $i = 2,3,\ldots,M$, there exists a polytope $P_i \supseteq \tensor*[_{c_0 4^i\epsilon^2}]{{(P_{i-1})}}{}$ with at most $C_2c_0^{-1/2}2^{-i}\epsilon^{-1}$ vertices such that $\mathrm{Haus}\bigl(P_i,\tensor*[_{c_0 4^i\epsilon^2}]{{(P_{i-1})}}{}\bigr) \leq c_04^i\epsilon^2$.  Observe that the Bronshteyn--Ivanov result can be applied in each case, because for $i=2,3,\ldots,M$,
\[
c_04^i \epsilon^2 \leq c_0 4^M \epsilon^2 \leq \frac{c_0}{16} \leq \delta_{\mathrm{BI},2}.
\]
Note moreover that $P_i \subseteq P_{i-1}$.  We claim that $P_M$ is a two-dimensional polytope, by our choice of $M$.  In fact,
\begin{align*}
\mu_2(P_M) &= \mu_2(D') - \mu_2(D' \setminus P_1) - \sum_{i=2}^M \mu_2(P_{i-1} \setminus P_i) \\
&\geq \frac{\pi}{4} - \mu_2(D' \setminus P_1) - \sum_{i=2}^M \mu_2(P_{i-1} \setminus \tensor*[_{c_0 4^i\epsilon^2}]{{(P_{i-1})}}{}) \\
&\geq \frac{\pi}{4} - \frac{\epsilon^2}{16} \sum_{i=1}^M 4^i \geq \frac{\pi}{4} - 4^{M-1}\epsilon^2 \geq \frac{\pi}{8}.
\end{align*}
\begin{figure}
\begin{center}
\includegraphics[width=0.7\textwidth]{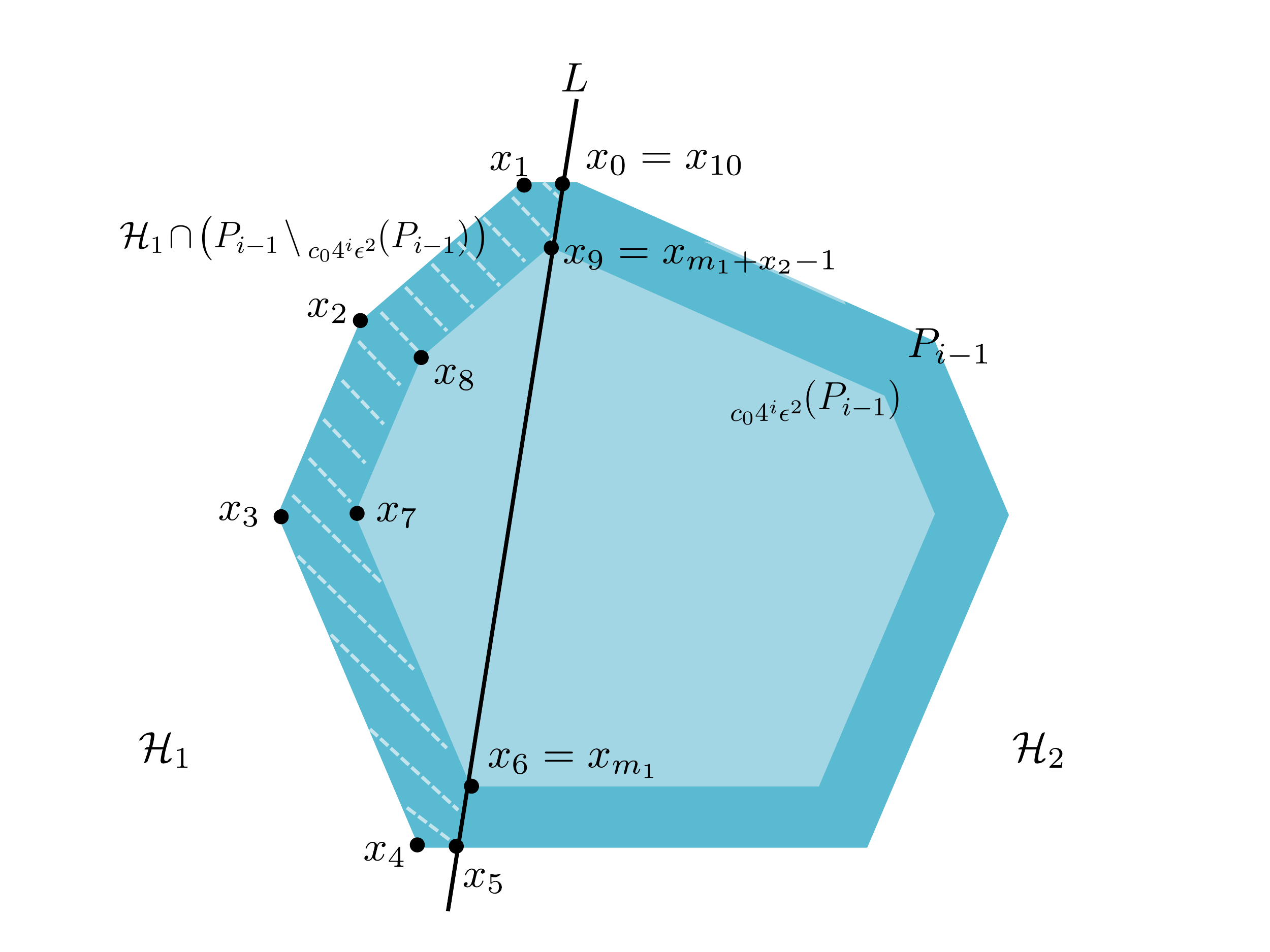}
\end{center}
\caption{\label{Fig:2D}Illustration of triangulation construction when $d=2$.}
\end{figure}

For $i = 2,3,\ldots,M$, we now describe how to construct a finite set of simplices (triangles) $S_{i,1},\ldots,S_{i,N_i}$ that cover $P_{i-1} \setminus \tensor*[_{c_0 4^i\epsilon^2}]{{(P_{i-1})}}{}$, so in particular, they cover $P_{i-1} \setminus P_i$.  Since $\tensor*[_{c_0 4^i\epsilon^2}]{{(P_{i-1})}}{}$ is a two-dimensional polyhedral convex set, we can pick two distinct vertices in this set.  The line $L$ passing through these two points forms the boundary of two closed halfspaces $\mathcal{H}_1$ and $\mathcal{H}_2$; we show how to triangulate $\mathcal{H}_1 \cap \bigl(P_{i-1} \setminus \tensor*[_{c_0 4^i\epsilon^2}]{{(P_{i-1})}}{}\bigr)$, with the triangulation of $\mathcal{H}_2 \cap \bigl(P_{i-1} \setminus \tensor*[_{c_0 4^i\epsilon^2}]{{(P_{i-1})}}{}\bigr)$ being entirely analogous.  We claim that, in the terminology of \citet{DevadossORourke2011}, $\mathcal{H}_1 \cap \bigl(P_{i-1} \setminus \tensor*[_{c_0 4^i\epsilon^2}]{{(P_{i-1})}}{}\bigr)$ is a polygon, i.e. a closed subset of $\mathbb{R}^2$ bounded by a finite collection of line segments forming a simple closed curve.    

To see this, observe that the line $L$ intersects $\mathrm{bd}(P_{i-1})$ at precisely two points; let $x_0 \in L \cap \mathrm{bd}(P_{i-1})$ denote the point that is larger in the lexicographic ordering (with respect to the standard Euclidean basis); see Figure~\ref{Fig:2D}.  Let $m_1 \in \mathbb{N}$ denote the number of vertices of $\mathcal{H}_1 \cap P_{i-1}$.  Now, for $j=1,\ldots,m_1-1$, let $x_j \in \mathcal{H}_1 \cap \mathrm{bd}(P_{i-1})$ denote the vertex of the polyhedral convex set $\mathcal{H}_1 \cap P_{i-1}$ that is the unique neighbour of $x_{j-1}$ not belonging to $\{x_0,\ldots,x_{j-1}\}$.  Note here that $x_{m_1-1}$ is the other point in $L \cap \mathrm{bd}(P_{i-1})$.  Let $x_{m_1}$ denote the closest point of $L \cap \tensor*[_{c_0 4^i\epsilon^2}]{{(P_{i-1})}}{}$ to $x_{m_1-1}$ (so the line segment joining $x_{m_1-1}$ and $x_{m_1}$ is a subset of $L$).  Let $m_2 \in \mathbb{N}$ denote the number of vertices of $\mathcal{H}_1 \cap \tensor*[_{c_0 4^i\epsilon^2}]{{(P_{i-1})}}{}$.  For $j=1,\ldots,m_2-1$, let $x_{m_1+j} \in \mathcal{H}_1 \cap \mathrm{bd}\bigl(\tensor*[_{c_0 4^i\epsilon^2}]{{(P_{i-1})}}{}\bigr)$ denote the vertex of the polyhedral convex set $\mathcal{H}_1 \cap \tensor*[_{c_0 4^i\epsilon^2}]{{(P_{i-1})}}{}$ that is the unique neighbour of $x_{m_1+j-1}$ not belonging to $\{x_{m_1},\ldots,x_{m_1+j-1}\}$.  Finally, let $x_{m_1+m_2} = x_0$.  Let $0 = t_0 < t_1 < \ldots < t_{m_1+m_2} = 1$.  The boundary of the set $\mathcal{H}_1 \cap \bigl(P_{i-1} \setminus \tensor*[_{c_0 4^i\epsilon^2}]{{(P_{i-1})}}{}\bigr)$ is parametrised by the closed curve $\gamma:[0,1] \rightarrow \mathbb{R}^2$ given by
\[
\gamma(t) := \Bigl(\frac{t_{j+1} - t}{t_{j+1}-t_j}\Bigr)x_j + \Bigl(\frac{t - t_j}{t_{j+1}-t_j}\Bigr)x_{j+1}
\]
for $t \in [t_j,t_{j+1}]$.  In fact, we claim that $\gamma$ is a simple closed curve.  To see this, note that $P_{i-1}$ and $\tensor*[_{c_0 4^i\epsilon^2}]{{(P_{i-1})}}{}$ are polyhedral convex sets in $\mathbb{R}^2$, so their (disjoint) boundaries are simple closed curves; $\gamma(t) \in \mathrm{bd}(P_{i-1})$ for $t \in [0,t_{m_1-1}]$ and $\gamma(t) \in \mathrm{bd}\bigl(\tensor*[_{c_0 4^i\epsilon^2}]{{(P_{i-1})}}{}\bigr)$ for $t \in [t_{m_1},t_{m_1+m_2-1}]$.  Moreover, $\gamma(t)$ belongs to the interior of the line segment joining $x_{m_1-1}$ and $x_{m_1}$ (and hence to the interior of $P_{i-1} \setminus \tensor*[_{c_0 4^i\epsilon^2}]{{(P_{i-1})}}{}$) for $t \in (t_{m_1-1},t_{m_1})$ and to the interior of the line segment joining $x_{m_1+m_2-1}$ and $x_{m_1+m_2}$ for $t \in (t_{m_1+m_2-1},t_{m_1+m_2})$; these two line segments are themselves disjoint.  This establishes that $\gamma$ is a simple closed curve, and hence that $\mathcal{H}_1 \cap \bigl(P_{i-1} \setminus \tensor*[_{c_0 4^i\epsilon^2}]{{(P_{i-1})}}{}\bigr)$ is a polygon.  Note, incidentally, that our reason for introducing the line $L$ was precisely to ensure this fact.  We can therefore apply Theorems~1.4 and~1.8 of \citet{DevadossORourke2011} to conclude that there exist simplices $S_{i,1},\ldots,S_{i,N_i}$ that triangulate $P_{i-1} \setminus \tensor*[_{c_0 4^i\epsilon^2}]{{(P_{i-1})}}{}$, where $N_i \leq 4C_2c_0^{-1/2}2^{-i}\epsilon^{-1}$.

For $i=2,3,\ldots,M$ and $j = 1,\ldots,N_i$, let
\[
\alpha_{i,j} := \frac{2^{1/2}}{M^{1/2}}\biggl(\frac{\mu_2(S_{i,j})}{\mu_2(P_{i-1} \setminus \tensor*[_{c_0 4^i\epsilon^2}]{{(P_{i-1})}}{})}\biggr)^{1/2}.
\]
By Theorem~\ref{Thm:Simplices}, there exists a bracketing set $\{[\phi_{i,j,\ell}^L,\phi_{i,j,\ell}^U]:\ell=1,\ldots,n_{i,j}\}$ for $\bar{\Phi}_1(S_{i,j})$, where $\log n_{i,j} \leq K_2^{**}\bigl(\frac{\mu_2^{1/2}(S_{i,j})}{\alpha_{i,j}\epsilon}\bigr)$, such that $L_2(\phi_{i,j,\ell}^U,\phi_{i,j,\ell}^L) \leq \alpha_{i,j}\epsilon$.  Moreover, by the same theorem, there exists a bracketing set $\{[\phi_{M+1,r}^L,\phi_{M+1,r}^U]:r=1,\ldots,n_{M+1}\}$ for $\bar{\Phi}_1(P_M)$, where $\log n_{M+1} \leq 8K_2^{**}C_2c_0^{-1/2}\bigl(\frac{\mu_2^{1/2}(P_M)}{\epsilon}\bigr)$, such that $L_2(\phi_{M+1,r}^U,\phi_{M+1,r}^L) \leq \epsilon$.  This last statement follows, because $2^{-M}\epsilon^{-1} \leq 8$. 

We can therefore define a bracketing set for $\bar{\Phi}_1(D')$ as follows: first, for $i=2,\ldots,M$ and $j=1,\ldots,N_i$, let
\[
\tilde{S}_{i,j} := S_{i,j} \setminus \biggl\{\biggl(\bigcup_{k=2}^{i-1}\bigcup_{m=1}^{N_k} S_{k,m}\biggr) \bigcup \biggl(\bigcup_{m=1}^{j-1} S_{i,m}\biggr)\biggr\} \quad \text{and} \quad \tilde{P}_M := P_M \setminus \bigcup_{k=2}^M \bigcup_{m=1}^{N_k} S_{k,m}.
\]
Now, for the array $\boldsymbol{\ell} = (\ell_{i,j})$ where $i \in \{2,\ldots,M\}$, $j \in \{1,\ldots,N_i\}$ and $\ell_{i,j} \in \{1,\ldots,n_{i,j}\}$, and for $r=1,\ldots,n_{M+1}$, let 
\begin{align}
\label{Eq:UpperLower}
\psi_{\boldsymbol{\ell},r}^U(x) &:= \mathbbm{1}_{\{x \in D' \setminus P_1\}} + \sum_{i=2}^M \sum_{j=1}^{N_i} \phi_{i,j,\ell_{i,j}}^U(x)\mathbbm{1}_{\{x \in \tilde{S}_{i,j}\}} + \phi_{M+1,r}^U(x)\mathbbm{1}_{\{x \in \tilde{P}_M\}}, \\
\psi_{\boldsymbol{\ell},r}^L(x) &:= -\mathbbm{1}_{\{x \in D' \setminus P_1\}} + \sum_{i=2}^M \sum_{j=1}^{N_i} \phi_{i,j,\ell_{i,j}}^L(x)\mathbbm{1}_{\{x \in \tilde{S}_{i,j}\}} + \phi_{M+1,r}^L(x)\mathbbm{1}_{\{x \in \tilde{P}_M\}}, \label{Eq:UpperLower2}
\end{align}
for $x \in D'$.  Observe that 
\begin{align*}
L_2^2(\psi_{\boldsymbol{\ell},r}^U,\psi_{\boldsymbol{\ell},r}^L) &\leq 4\mu_2(D' \setminus P_1) + \sum_{i=2}^M \sum_{j=1}^{N_i} L_2^2(\phi_{i,j,\ell_{i,j}}^U,\phi_{i,j,\ell_{i,j}}^L) + L_2^2(\phi_{M+1,r}^U,\phi_{M+1,r}^L) \\
&\leq 4\mu_2(D' \setminus \tensor*[_{c_0 \epsilon^2}]{{D'}}{}) + \epsilon^2 \sum_{i=2}^M \sum_{j=1}^{N_i} \alpha_{i,j}^2 + \epsilon^2 \leq 4\epsilon^2. 
\end{align*}
Moreover, the logarithm of the cardinality of the bracketing set is
\begin{align*}
\sum_{i=2}^M \sum_{j=1}^{N_i} \log n_{i,j} + \log n_{M+1} &\leq K_2^{**}\sum_{i=2}^M \sum_{j=1}^{N_i} \frac{\mu_2^{1/2}(S_{i,j})}{\alpha_{i,j}\epsilon} + \frac{8K_2^{**}C_2c_0^{-1/2}\mu_2^{1/2}(P_M)}{\epsilon} \\
&\leq \frac{K_2^{**}2^{-1/2}M^{1/2}}{\epsilon} \sum_{i=2}^M N_i \mu_2^{1/2}(P_{i-1} \setminus \tensor*[_{c_0 4^i \epsilon^2}]{{P_{i-1}}}{}) + \frac{16K_2^{**}C_2c_0^{-1/2}}{\epsilon} \\
&\leq \frac{K_2^{**}C_2c_0^{-1/2}M^{3/2}}{\epsilon} + \frac{16K_2^{**}C_2c_0^{-1/2}}{\epsilon} \leq \frac{32K_2^{**}C_2c_0^{-1/2}M^{3/2}}{\epsilon} \\
&\leq \frac{32K_2^{**}C_2c_0^{-1/2}}{\log^{3/2} 2} \epsilon^{-1}\log^{3/2}\Bigl(\frac{1}{4\epsilon}\Bigr).
\end{align*}
Defining $K_{1,2}^\circ := \frac{32K_2^{**}C_2}{\log^{3/2} 2}$, we have therefore proved that when $d=2$,
\[
\log N_{[]}\bigl(2\epsilon,\bar{\Phi}_1(D'),L_2\bigr) \leq K_{1,2}^\circ \epsilon^{-1}\log^{3/2}\Bigl(\frac{1}{4\epsilon}\Bigr) 
\]
for all $\epsilon \in (0,\epsilon_2^\circ]$.

\emph{The case $d=3$}: The proof is similar in spirit to the case $d=2$, so we emphasise the points of difference, and give fewer details where the argument is essentially the same.  

Set $\epsilon_3^\circ := 1/8$, and fix $\epsilon \in (0,\epsilon_3^\circ]$.  The Bronshteyn--Ivanov result once again yields a polytope $P_1$ with $\tensor*[_{c_0 \epsilon^2}]{{D'}}{} \subseteq P_1 \subseteq (\tensor*[_{c_0 \epsilon^2}]{{D'}}{})^{c_0 \epsilon^2]} \subseteq D'$ such that $P_1$ has at most $C_3c_0^{-1}\epsilon^{-2}$ vertices and $\mathrm{Haus}(P_1,\tensor*[_{c_0 \epsilon^2}]{{D'}}{}) \leq c_0\epsilon^2$.  Applying the result of \citet{BronshteynIvanov1975} recursively, with $M := \big\lfloor\log\bigl(\frac{1}{4\epsilon}\bigr)/\log 2 \big\rfloor$, for each $i = 2,3,\ldots,M$, there exists a polytope $\tensor*[_{c_0 4^i\epsilon^2}]{{(P_{i-1})}}{} \subseteq P_i \subseteq P_{i-1}$ with at most $C_3c_0^{-1}4^{-i}\epsilon^{-2}$ vertices such that $\mathrm{Haus}\bigl(P_i,\tensor*[_{c_0 4^i\epsilon^2}]{{(P_{i-1})}}{}\bigr) \leq c_04^i\epsilon^2$.  Again we claim that $P_M$ is a three-dimensional polytope, since
\[
\mu_3(P_M) = \mu_3(D') - \mu_3(D' \setminus P_1) - \sum_{i=2}^M \mu_3(P_{i-1} \setminus P_i) > 0.
\]
The construction of \citet{WangYang2000} (cf.\ also \citet{ChazelleShouraboura1995}) yields, for each $i=2,3,\ldots,M$, simplices $S_{i,1},\ldots,S_{i,N_i}$, where $N_i \leq 16C_3c_0^{-1}4^{-i}\epsilon^{-2}$ that triangulate $P_{i-1} \setminus \tensor*[_{c_0 4^i\epsilon^2}]{{(P_{i-1})}}{}$.  Set
\[
\alpha_{i,j} := \biggl(\frac{2^{-(i-2)/2}}{\sum_{k=2}^M 2^{-k/2}}\biggr)^{1/2}\biggl(\frac{\mu_3(S_{i,j})}{\mu_3(P_{i-1} \setminus \tensor*[_{c_0 4^i\epsilon^2}]{{(P_{i-1})}}{})}\biggr)^{1/2}. 
\]
Applying Theorem~\ref{Thm:Simplices} again, there exists a bracketing set $\{[\phi_{i,j,\ell}^L,\phi_{i,j,\ell}^U]:\ell=1,\ldots,n_{i,j}\}$ for $\bar{\Phi}_1(S_{i,j})$, where $\log n_{i,j} \leq K_3^{**}\bigl(\frac{\mu_3^{1/2}(S_{i,j})}{\alpha_{i,j}\epsilon}\bigr)^{3/2}$, such that $L_2(\phi_{i,j,\ell}^U,\phi_{i,j,\ell}^L) \leq \alpha_{i,j}\epsilon$.  Moreover, by the same theorem, there exists a bracketing set $\{[\phi_{M+1,r}^L,\phi_{M+1,r}^U]:r=1,\ldots,n_{M+1}\}$ for $\bar{\Phi}_1(P_M)$, where $\log n_{M+1} \leq 64C_3c_0^{-1}K_3^{**}\bigl(\frac{\mu_3^{1/2}(P_M)}{\epsilon}\bigr)^{3/2}$, such that $L_2(\phi_{M+1,r}^U,\phi_{M+1,r}^L) \leq \epsilon$.  

Defining brackets $\psi_{\boldsymbol{\ell},r}^U$ and $\psi_{\boldsymbol{\ell},r}^L$ as in~\eqref{Eq:UpperLower} and~\eqref{Eq:UpperLower2}, we find that $L_2^2(\psi_{\boldsymbol{\ell},r}^U,\psi_{\boldsymbol{\ell},r}^L) \leq 4\epsilon^2$, where we have used the fact that 
\[
\sum_{i=2}^M \sum_{j=1}^{N_i} \alpha_{i,j}^2 = 2.
\]
Moreover, the logarithm of the cardinality of the bracketing set is
\begin{align*}
\sum_{i=2}^M \sum_{j=1}^{N_i} \log n_{i,j} &+ \log n_{M+1} \leq K_3^{**}\sum_{i=2}^M \sum_{j=1}^{N_i} \Bigl(\frac{\mu_3^{1/2}(S_{i,j})}{\alpha_{i,j}\epsilon}\Bigr)^{3/2} + 64K_3^{**}C_3c_0^{-1}\Bigl(\frac{\mu_3^{1/2}(P_M)}{\epsilon}\Bigr)^{3/2} \\
&\leq \frac{K_3^{**}}{\epsilon^{3/2}} \sum_{i=2}^M \biggl(\frac{\sum_{k=2}^M 2^{-k/2}}{2^{-(i-2)/2}}\biggr)^{3/4}N_i \mu_3^{3/4}(P_{i-1} \setminus \tensor*[_{c_0 4^i \epsilon^2}]{{P_{i-1}}}{}) + \frac{256K_3^{**}C_3c_0^{-1}}{\epsilon^{3/2}} \\
&\leq \frac{4K_3^{**}C_3c_0^{-1}}{\epsilon^2}\sum_{i=2}^M 2^{-i/8} + \frac{256K_3^{**}C_3c_0^{-1}}{\epsilon^{3/2}} \leq \frac{512K_3^{**}C_3c_0^{-1}}{\epsilon^2}
\end{align*}
Defining $K_{1,3}^\circ := 512K_3^{**}C_3c_0^{-1}$, we have therefore proved that when $d=3$,
\[
\log N_{[]}\bigl(2\epsilon,\bar{\Phi}_1(D'),L_2\bigr) \leq K_{1,3}^\circ \epsilon^{-2}
\]
for all $\epsilon \in (0,\epsilon_3^\circ]$.

For the final steps, we deal with the cases $d=1,2,3$ simultaneously.  Let
\[
\tilde{h}_d(\epsilon) := \left\{ \begin{array}{ll} \epsilon^{-1/2} & \mbox{when $d=1$} \\
\epsilon^{-1}\log_{++}^{3/2}(\frac{1}{4\epsilon}) & \mbox{when $d=2$} \\
\epsilon^{-2} & \mbox{when $d=3$.} \end{array} \right.
\]
(Thus $\tilde{h}_d$ is defined in almost the same way as $h_d$ from the proof of Theorem~\ref{Thm:BracketingBounds}, except for the 4 inside the logarithm when $d=2$.)  Set $K_{2,d}^\circ := K_{1,d}^\circ\tilde{h}_d(\epsilon_d^\circ)/\tilde{h}_d\bigl(\mu_d^{1/2}(D')\bigr)$.  Then, for $\epsilon \in (\epsilon_d^\circ,\mu_d^{1/2}(D')]$, we have
\begin{align*}
\log N_{[]}\bigl(2\epsilon,\bar{\Phi}_1(D'),L_2\bigr) \leq \log N_{[]}\bigl(2\epsilon_d^\circ,\bar{\Phi}_1(D'),L_2\bigr) \leq K_{1,d}^\circ\tilde{h}_d(\epsilon_d^\circ) &= K_{2,d}^\circ\tilde{h}_d\bigl(\mu_d^{1/2}(D')\bigr) \\
&\leq K_{2,d}^\circ \tilde{h}_d(\epsilon).
\end{align*}
On the other hand, for $\epsilon > \mu_d^{1/2}(D')$, it suffices to consider a single bracketing pair consisting of the constant functions $\psi^U(x) := 1$ and $\psi^L(x) := -1$ for $x \in D'$.  Note that $L_2^2(\psi^U,\psi^L) = 4\mu_d(D')$, so that $\log N_{[]}\bigl(2\epsilon,\Phi_B(D'),L_2\bigr) = 0$ for $\epsilon > \mu_d^{1/2}(D')$.  We conclude that when $D'$ is a $d$-dimensional closed, convex subset of $\mathbb{R}^d$ with $d^{-1}\bar{B}_d(0,1) \subseteq D' \subseteq \bar{B}_d(0,1)$, 
\[
\log N_{[]}\bigl(2\epsilon,\bar{\Phi}_1(D'),L_2\bigr) \leq K_{2,d}^\circ \tilde{h}_d(\epsilon)
\]
for all $\epsilon > 0$.

Finally, we show how to transform the brackets to the original domain $D$ and rescale their ranges to $[-B,B]$.  Recall that $D' = AD + b$.  Simplifying our notation from before, given $\epsilon > 0$, we have shown that we can define a bracketing set $\{[\psi_j^L,\psi_j^U]:j=1,\ldots,N\}$ for $\bar{\Phi}_1(D')$ with $L_2^2(\psi_j^U,\psi_j^L) \leq 4\epsilon^2|\det A|/B^2$ and $\log N \leq K_{2,d}^\circ \tilde{h}_d(\epsilon|\det A|^{1/2}/B)$.  We now define transformed brackets for $\bar{\Phi}_B(D)$ by
\[
\tilde{\psi}_j^U(z) := B\psi_j^U(Az+b) \quad \text{and} \quad \tilde{\psi}_j^L(z) := B\psi_j^L(Az+b).
\]
Then 
\begin{align*}
L_2^2(\tilde{\psi}_j^U,\tilde{\psi}_j^L) &= B^2\int_{D} \{\psi_j^U(Az+b) - \psi_j^L(Az+b)\}^2 \, d\mu_d(z) \\
&= \frac{B^2}{|\det A|} L_2^2(\psi_j^U,\psi_j^L) \leq 4\epsilon^2.
\end{align*}  
Now
\[
|\det A| = \frac{\mu_d(AD+b)}{\mu_d(D)} \geq \frac{\mu_d(d^{-1}\bar{B}_d(0,1))}{\mu_d(D)} = \frac{d^{-d}\pi^{d/2}}{\Gamma(1+d/2)\mu_d(D)}.
\]
It is convenient for the case $d=2$ to note that
\[
\tilde{h}_2\biggl(\frac{\epsilon|\det A|^{1/2}}{B}\biggr) \leq \tilde{h}_2\biggl(\frac{\epsilon\pi^{1/2}}{2B\mu_2^{1/2}(D)}\biggr) \leq \frac{2}{\pi^{1/2}}h_2\biggl(\frac{\epsilon}{B\mu_2^{1/2}(D)}\biggr).
\]
The final result therefore follows, taking $K_1^\circ := K_{2,1}^\circ$, $K_2^\circ := \frac{2}{\pi^{1/2}}K_{2,2}^\circ$ and $K_3^\circ := \frac{81}{4\pi}K_{2,3}^\circ$.
\end{proof}

\subsubsection{Auxiliary results for the proof of Theorem~\ref{Thm:Main}}
\label{Sec:ProofMain}

\begin{lemma}
\label{Lemma:ThreeTerms}
There exists $\eta_d \in (0,1)$ such that
\[
\sup_{g_0 \in \mathcal{F}_d^{0,I}} \mathbb{P}_{g_0}(\hat{g}_n \notin \tilde{\mathcal{F}}_d^{1,\eta_d}) = O(n^{-1})
\]
as $n \rightarrow \infty$, where $\hat{g}_n$ denotes the log-concave maximum likelihood estimator based on a random sample $Z_1,\ldots,Z_n$ from $g_0$.
\end{lemma}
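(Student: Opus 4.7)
The plan is to decompose the failure event as
\[
\{\hat{g}_n\notin\tilde{\mathcal{F}}_d^{1,\eta_d}\}\subseteq\{\|\mu_{\hat{g}_n}\|>1\}\cup\{\lambda_{\max}(\Sigma_{\hat{g}_n})>1+\eta_d\}\cup\{\lambda_{\min}(\Sigma_{\hat{g}_n})<1-\eta_d\}
\]
and bound each event on the right by $O(n^{-1})$ uniformly over $g_0\in\mathcal{F}_d^{0,I}$. Theorem~\ref{Thm:IntEnv}(a) gives an exponentially decaying envelope valid uniformly on $\mathcal{F}_d^{0,I}$, hence every moment of $\|Z_1\|$ is uniformly bounded in terms of its order and $d$. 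Chebyshev applied to $\|\bar{Z}_n\|^2$ then yields $\mathbb{P}_{g_0}(\|\bar{Z}_n\|>1)\leq d/n$, and an entry-wise Chebyshev bound with fourth moments, combined with a union bound over the $d^2$ entries of $S_n-I$, yields $\mathbb{P}_{g_0}(\|S_n-I\|_{\mathrm{op}}>\eta_d/2)=O(n^{-1})$.

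To dispatch the first two subevents, I would appeal to the standard variational characterisation of the log-concave MLE, namely that $\int\psi\,d\hat{g}_n\geq n^{-1}\sum_{i=1}^n\psi(Z_i)$ for every upper semi-continuous concave $\psi:\mathbb{R}^d\rightarrow[-\infty,\infty)$. Specialising to $\psi(x)=\pm a^Tx$ gives $\mu_{\hat{g}_n}=\bar{Z}_n$, so $\|\mu_{\hat{g}_n}\|\leq 1$ on $\{\|\bar{Z}_n\|\leq 1\}$; specialising to $\psi(x)=-\bigl(a^T(x-\bar{Z}_n)\bigr)^2$ gives the positive semi-definite inequality $\Sigma_{\hat{g}_n}\preceq S_n$, whence $\lambda_{\max}(\Sigma_{\hat{g}_n})\leq\lambda_{\max}(S_n)\leq 1+\eta_d/2$ on $\{\|S_n-I\|_{\mathrm{op}}\leq\eta_d/2\}$.

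The third subevent is the main technical challenge, since the one-sided inequality $\Sigma_{\hat{g}_n}\preceq S_n$ cannot deliver a lower bound on $\lambda_{\min}(\Sigma_{\hat{g}_n})$. My plan is to argue by contradiction: on the event $\{\|S_n-I\|_{\mathrm{op}}\leq\eta_d/2\}$, suppose there is a unit vector $v$ with $\delta:=v^T\Sigma_{\hat{g}_n}v<1-\eta_d$. Standardising, $\hat{h}_n(u):=(\det\Sigma_{\hat{g}_n})^{1/2}\hat{g}_n(\Sigma_{\hat{g}_n}^{1/2}u+\bar{Z}_n)$ lies in $\mathcal{F}_d^{0,I}$, so Theorem~\ref{Thm:IntEnv}(b) gives a universal lower bound $\hat{h}_n(u)\geq c_d>0$ on $\bar{B}_d(0,1/4)$, and Theorem~\ref{Thm:IntEnv}(a) gives the matching upper envelope. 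Pulled back, $\hat{g}_n$ is thus bounded below of order $(\det\Sigma_{\hat{g}_n})^{-1/2}$ on the thin ellipsoid $\bar{Z}_n+\Sigma_{\hat{g}_n}^{1/2}\bar{B}_d(0,1/4)$ whose width in direction $v$ is only $\delta^{1/2}/2$, while the outer envelope forces $\hat{g}_n$ to decay exponentially away from $\bar{Z}_n$ in direction $v$ relative to $\delta^{1/2}$.

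The contradiction can then be extracted by comparing $\hat{g}_n$ against a log-concave alternative that is slightly stretched in direction $v$, such as the affine push-forward $f^*(x):=\lambda\,\hat{g}_n\bigl(\bar{Z}_n+(I+(\lambda-1)vv^T)(x-\bar{Z}_n)\bigr)$ for some $\lambda\in(0,1)$, whose only effect on the covariance is to inflate the $v$-eigenvalue from $\delta$ to $\lambda^{-2}\delta$. The MLE property gives
\[
\sum_{i=1}^n\log\hat{g}_n(Z_i)\geq n\log\lambda+\sum_{i=1}^n\log\hat{g}_n\bigl(Z_i+(\lambda-1)v^T(Z_i-\bar{Z}_n)\,v\bigr),
\]
and lower-bounding $\log\hat{g}_n$ at the shifted points using concavity (write $Z_i+(\lambda-1)v^T(Z_i-\bar{Z}_n)v$ as a convex combination of $Z_i$ and its projection onto the mean hyperplane $\{x:v^Tx=v^T\bar{Z}_n\}$) and then substituting the envelopes from Theorem~\ref{Thm:IntEnv} should, after optimising over $\lambda\in(0,1)$ and taking $\eta_d$ sufficiently small depending only on $d$, force $v^TS_nv<1-\eta_d/2$, contradicting the assumed bound on $S_n$. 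The hard part is quantitative: the constants $A_{0,d}$, $B_{0,d}$, $c_d$ from Theorem~\ref{Thm:IntEnv} must enter on the correct sides of the chained inequalities with sharp enough scaling in $\delta$, and the permissible $\eta_d$ is dictated by this balancing.
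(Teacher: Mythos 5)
Your decomposition into the three events $\{\|\mu_{\hat{g}_n}\|>1\}$, $\{\lambda_{\max}(\Sigma_{\hat{g}_n})>1+\eta_d\}$ and $\{\lambda_{\min}(\Sigma_{\hat{g}_n})<1-\eta_d\}$ is exactly the paper's, and your treatment of the first two is sound in substance: $\mu_{\hat{g}_n}=\bar{Z}_n$ and $\Sigma_{\hat{g}_n}\preceq S_n$ are precisely the facts invoked there (via Remark~2.3 of \citet{DSS2011}), and the envelope of Theorem~\ref{Thm:IntEnv}(a) does give uniform moments so that Chebyshev yields $O(n^{-1})$. You also correctly identify the third event as the real difficulty, since $\Sigma_{\hat{g}_n}\preceq S_n$ gives no lower bound on $\lambda_{\min}(\Sigma_{\hat{g}_n})$.

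However, your proposed compression argument for that third event has a genuine gap. The competitor $f^*$ is the pullback of $\hat{g}_n$ under the map that shrinks the $v$-component about the hyperplane $\{x:v^Tx=v^T\bar{Z}_n\}$, so $\log f^*(Z_i)=\log\lambda+\log\hat{g}_n(W_i)$ with $W_i$ the compressed data point; but $\mathrm{supp}(\hat{g}_n)=\mathrm{conv}(Z_1,\ldots,Z_n)$, and neither $W_i$ nor the projection $P_i$ of $Z_i$ onto that hyperplane need lie in this convex hull (already for a triangle of data points in $d=2$, projecting a vertex along $v$ onto the central hyperplane exits the hull). Then $\log\hat{g}_n(P_i)=-\infty$, the concavity step $\log\hat{g}_n(W_i)\geq\lambda\log\hat{g}_n(Z_i)+(1-\lambda)\log\hat{g}_n(P_i)$ is vacuous, and no contradiction can be extracted. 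Even when all $P_i$ lie in the support, you have no lower bound on $n^{-1}\sum_i\log\hat{g}_n(P_i)$ that is uniform in $n$ and in the sample: the only lower-bound tool, Theorem~\ref{Thm:IntEnv}(b) (which, note, is stated pointwise in $x$, not uniformly over a ball, though that is repairable by quasi-concavity), applies after standardisation only on the ellipsoid $\mu_{\hat{g}_n}+\Sigma_{\hat{g}_n}^{1/2}\bar{B}_d(0,1/4)$, of bounded diameter, whereas $\max_i\|P_i-\bar{Z}_n\|$ typically grows like $\log n$; so the chained inequalities cannot force $\lambda_{\min}(\Sigma_{\hat{g}_n})\geq c_d$ with $c_d$ depending only on $d$. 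The paper's proof supplies the missing idea differently and more globally: it shows that the log-concave projection $\psi^*(P)$ is uniformly bounded above by some $e^{M_{0,d}}$ over all distributions $P$ with $\|\mu_P\|\leq 1/10$, $1/2\leq\lambda_{\min}(\Sigma_P)\leq\lambda_{\max}(\Sigma_P)\leq 3/2$ and a uniform fourth-moment bound (a compactness argument using tightness, uniform integrability and continuity of the projection under Wasserstein convergence, via Theorem~2.15 of \citet{DSS2011}); since any $f\in\mathcal{F}_d^{\mu,\Sigma}$ satisfies $\sup_x f(x)\geq a_{0,d}(\det\Sigma)^{-1/2}$ by Theorem~\ref{Thm:IntEnv}(b), this forces $\det\Sigma_{\hat{g}_n}$, hence $\lambda_{\min}(\Sigma_{\hat{g}_n})$, to be bounded below whenever the empirical distribution lies in that class, which happens with probability $1-O(n^{-1})$ by Chebyshev-type bounds on $\bar{Z}_n$, $\tilde{\Sigma}_n$ and the empirical fourth moment. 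Some such uniform control of the projection (or an alternative quantitative comparison, e.g.\ against a fixed Gaussian together with a lower bound on the empirical mean absolute deviation in direction $v$) is what your sketch is missing.
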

\begin{proof}
For $g \in \mathcal{F}_d$, we write $\mu_g := \int_{\mathbb{R}^d} z g(z) \, dz$ and $\Sigma_g := \int_{\mathbb{R}^d} (z - \mu_g)(z - \mu_g)^T \, g(z) \, dz$.  Note that for $n \geq d+1$, and for any $\eta_d \in (0,1)$,
\begin{align}
\label{Eq:ThreeTerms}
\sup_{g_0 \in \mathcal{F}_d^{0,I}} \mathbb{P}_{g_0}(\hat{g}_n \notin \tilde{\mathcal{F}}_d^{1,\eta_d}) &\leq \sup_{g_0 \in \mathcal{F}_d^{0,I}} \mathbb{P}_{g_0}(\|\mu_{\hat{g}_n}\| > 1) + \sup_{g_0 \in \mathcal{F}_d^{0,I}} \mathbb{P}_{g_0}\{\lambda_{\mathrm{max}}(\Sigma_{\hat{g}_n}) > 1 + \eta_d\} \notag \\
&\hspace{4cm}+ \sup_{g_0 \in \mathcal{F}_d^{0,I}} \mathbb{P}_{g_0}\{\lambda_{\mathrm{min}}(\Sigma_{\hat{g}_n}) < 1 - \eta_d\}.
\end{align}
We treat the three terms on the right-hand side of~(\ref{Eq:ThreeTerms}) in turn.   First, we observe by Remark~2.3 of \citet{DSS2011} that $\mu_{\hat{g}_n} = n^{-1}\sum_{i=1}^n Z_i =: \bar{Z}$, where the density of $n^{1/2}\bar{Z} := n^{1/2}(\bar{Z}_1,\ldots,\bar{Z}_d)^T$ belongs to $\mathcal{F}_d^{0,I}$.  Taking $A_{0,d}, B_{0,d} > 0$ from Theorem~\ref{Thm:IntEnv}(a), it follows that for any $t \geq 0$ and $j=1,\ldots,d$, 
\[
\sup_{g_0 \in \mathcal{F}_d^{0,I}} \mathbb{P}_{g_0}(n^{1/2}|\bar{Z}_j| > t) \leq 2 \int_t^\infty e^{-A_{0,d} x + B_{0,d}} \, dx = \frac{2}{A_{0,d}}e^{-A_{0,d}t+B_{0,d}}.
\]
Hence
\[
\sup_{g_0 \in \mathcal{F}_d^{0,I}} \mathbb{P}_{g_0}(\|\mu_{\hat{g}_n}\| > 1) \leq \sup_{g_0 \in \mathcal{F}_d^{0,I}} \sum_{j=1}^d \mathbb{P}_{g_0}\biggl(n^{1/2}|\bar{Z}_j| > \frac{n^{1/2}}{d^{1/2}}\biggr) \leq \frac{2d}{A_{0,d}}e^{-\frac{A_{0,d} n^{1/2}}{d^{1/2}} + B_{0,d}} = O(n^{-1}).
\]
For the second term, we use Remark~2.3 of \citet{DSS2011} again to see that $\lambda_{\max}(\Sigma_{\hat{g}_n}) \leq \lambda_{\max}(\tilde{\Sigma}_n)$, where $\tilde{\Sigma}_n := n^{-1}\sum_{i=1}^n (Z_i - \bar{Z})(Z_i - \bar{Z})^T = n^{-1}\sum_{i=1}^n Z_i Z_i^T - \bar{Z}\bar{Z}^T$ denotes the sample covariance matrix.  For each $j = 1,\ldots,d$, 
\[
\sup_{g_0 \in \mathcal{F}_d^{0,I}} \int_{\mathbb{R}^d} z_j^4 g_0(z) \, dz \leq 2\int_0^\infty z_j^4 e^{-A_{0,1}z_j + B_{0,1}} \, dz_j = \frac{48e^{B_{0,1}}}{A_{0,1}^5}.
\]
Writing $Z_i := (Z_{i1},\ldots,Z_{id})^T$, we deduce from the Gerschgorin circle theorem, Chebychev's inequality and Cauchy--Schwarz that
\begin{align*}
\sup_{g_0 \in \mathcal{F}_d^{0,I}} &\mathbb{P}_{g_0}\{\lambda_{\mathrm{max}}(\Sigma_{\hat{g}_n}) > 1 + \eta_d\} \leq \sup_{g_0 \in \mathcal{F}_d^{0,I}} \mathbb{P}_{g_0}\{\lambda_{\mathrm{max}}(\tilde{\Sigma}_n) > 1 + \eta_d\} \\
&\leq \sup_{g_0 \in \mathcal{F}_d^{0,I}} \mathbb{P}_{g_0}\biggl(\bigcup_{j=1}^d \biggl\{\frac{1}{n}\sum_{i=1}^n Z_{ij}^2 - 1\biggr\} > \frac{\eta_d}{3}\biggr) + \sup_{g_0 \in \mathcal{F}_d^{0,I}} \mathbb{P}_{g_0}\biggl(\bigcup_{1 \leq j < k \leq d} \biggl|\frac{1}{n}\sum_{i=1}^n Z_{ij}Z_{ik}\biggr| > \frac{\eta_d}{3d}\biggr) \\
&\hspace{9cm}+ \sup_{g_0 \in \mathcal{F}_d^{0,I}} \mathbb{P}_{g_0}\biggl(\|\bar{Z}\|^2 > \frac{\eta_d}{3}\biggr) \\
&\leq \frac{9d}{\eta_d^2n}\times \frac{48e^{B_{0,1}}}{A_{0,1}^5} + \frac{9d^2}{\eta_d^2n}\times \frac{24d(d-1)e^{B_{0,1}}}{A_{0,1}^5} + \frac{2d}{A_{0,d}}e^{-\frac{A_{0,d} \eta_d^{1/2} n^{1/2}}{3^{1/2}d^{1/2}} + B_{0,d}} = O(n^{-1}).
\end{align*}
The third term on the right-hand side of~(\ref{Eq:ThreeTerms}) is the most challenging to handle.  Let $\mathcal{P}^{1/10,1/2}$ denote the class of probability distributions $P$ on $\mathbb{R}^d$ such that $\mu_P := \int_{\mathbb{R}^d} x \, dP(x)$ and $\Sigma_P := \int_{\mathbb{R}^d} (x-\mu_P)(x-\mu_P)^T \, dP(x)$ satisfy $\|\mu_P\| \leq 1/10$ and $1/2 \leq \lambda_{\min}(\Sigma_P) \leq \lambda_{\max}(\Sigma_P) \leq 3/2$, and such that 
\[
\int_{\mathbb{R}^d} \|x\|^4 \, dP(x) \leq \frac{2d\pi^{d/2}\Gamma(d+4)}{\Gamma(1+d/2)}\frac{e^{B_0,d}}{A_{0,d}^{d+4}} =: \tau_{4,d},
\]
say, where $A_{0,d}$ and $B_{0,d}$ are taken from Theorem~\ref{Thm:IntEnv}(a).  Observe that by Theorem~\ref{Thm:IntEnv}(a),
\[
\sup_{g_0 \in \mathcal{F}_d^{0,I}} \int_{\mathbb{R}^d} \|x\|^4 g_0(x) \, dx \leq \int_{\mathbb{R}^d} \|x\|^4 e^{-A_{0,d}\|x\| + B_{0,d}} \, dx = \frac{d\pi^{d/2}e^{B_{0,d}}}{\Gamma(1+d/2)}\int_0^\infty r^{d+3} e^{-A_{0,d} r} \, dr = \frac{\tau_{4,d}}{2}.
\]
Recall from Theorem~2.2 of \citet{DSS2011} that for $P \in \mathcal{P}^{1/10,1/2}$, there exists a unique log-concave projection $\psi^*(P) \in \mathcal{F}_d$ given by
\[
\psi^*(P) := \argmax_{f \in \mathcal{F}_d} \int_{\mathbb{R}^d} \log f \, dP.
\]
Our first claim is that there exists $M_{0,d} > 0$, depending only on $d$, such that
\[
\sup_{P \in \mathcal{P}^{1/10,1/2}} \sup_{x \in \mathbb{R}^d} \log \psi^*(P)(x) \leq M_{0,d}.
\]
To see this, suppose for a contradiction that there exist $(P_n) \in \mathcal{P}^{1/10,1/2}$ such that 
\[
\sup_{x \in \mathbb{R}^d} \log \psi^*(P_n)(x) \rightarrow \infty.
\]
Similar to the proof of Theorem~\ref{Thm:IntEnv}(a), the sequence $(P_n)$ is tight, so there exists a subsequence $(P_{n_k})$ and a probability measure $P$ on $\mathbb{R}^d$ such that $P_{n_k} \stackrel{d}{\rightarrow} P$.  If $(Y_{n_k})$ is a sequence of random vectors on the same probability space with $Y_{n_k} \sim P_{n_k}$, then $\{\|Y_{n_k}\|:k \in \mathbb{N}\}$ is uniformly integrable, because $\mathbb{E}(\|Y_{n_k}\|^2) \leq 3d/2 + 1/100$.  We deduce that $\int_{\mathbb{R}^d} \|x\| \, dP_{n_k}(x) \rightarrow \int_{\mathbb{R}^d} \|x\| \, dP(x)$.  Together with the weak convergence, this means that $P_{n_k}$ converges to $P$ in the Wasserstein distance.  Moreover, for any unit vector $u \in \mathbb{R}^d$, the family $\{(u^TY_{n_k})^2:k \in \mathbb{N}\}$ is uniformly integrable, because $\mathbb{E}\{(u^TY_{n_k})^4\} \leq \mathbb{E}(\|Y_{n_k}\|^4) \leq \tau_{4,d}$.  Thus $u^T\Sigma_P u = \lim_{k \rightarrow \infty} u^T \Sigma_{P_{n_k}}u \geq 1/2$, so in particular, $P(H) < 1$ for every hyperplane $H$ in $\mathbb{R}^d$.  We conclude by Theorem~2.15 and Remark~2.16 of \citet{DSS2011} that $\psi^*(P_{n_k})$ converges to $\psi^*(P)$ uniformly on closed subsets of $\mathbb{R}^d \setminus \mathrm{disc}(\psi^*(P))$, where $\mathrm{disc}(\psi^*(P))$ denotes the set of discontinuity points of $\psi^*(P)$.  In turn, this implies that
\[
\sup_{x \in \mathbb{R}^d} \psi^*(P_{n_k})(x) \leq \sup_{x \in \mathbb{R}^d} \psi^*(P)(x) + 1
\]
for sufficiently large $k$, which establishes our desired contradiction.

Moreover, by Theorem~\ref{Thm:IntEnv}(b), there exists $a_{0,d} > 0$, depending only on $d$, such that
\[
\inf_{f \in \mathcal{F}_d^{0,I}} f(0) \geq a_{0,d}.
\]
It follows that for any $\mu \in \mathbb{R}^d$,
\[
\inf_{f \in \mathcal{F}_d^{\mu,\Sigma}} \sup_{x \in \mathbb{R}^d} f(x) \geq a_{0,d} (\det \Sigma)^{-1/2}.
\]
Thus, using our claim, if $\det \Sigma < a_{0,d}^2 e^{-2M_{0,d}}$, then $\{\psi^*(P):P \in \mathcal{P}^{1/10,1/2}\} \cap (\cup_{\mu \in \mathbb{R}^d} \mathcal{F}_d^{\mu,\Sigma}) = \emptyset$.  Since $\sup_{P \in \mathcal{P}^{1/10,1/2}} \lambda_{\max}(\Sigma_P) \leq 3/2$, we deduce that if $\lambda_{\min}(\Sigma) < 2^{d-1}a_{0,d}^2 e^{-2M_{0,d}}/3^{d-1}$, then $\{\psi^*(P):P \in \mathcal{P}^{1/10,1/2}\} \cap (\cup_{\mu \in \mathbb{R}^d} \mathcal{F}_d^{\mu,\Sigma}) = \emptyset$.

Finally, we conclude that if we define $\eta_d := 1 - \frac{2^{d-2}a_{0,d}^2 e^{-2M_{0,d}}}{3^{d-1}}$, then
\begin{align*}
&\sup_{g_0 \in \mathcal{F}_d^{0,I}} \mathbb{P}_{g_0}\{\lambda_{\mathrm{min}}(\Sigma_{\hat{g}_n}) < 1-\eta_d\} \leq \sup_{g_0 \in \mathcal{F}_d^{0,I}} \mathbb{P}_{g_0}\{\lambda_{\mathrm{min}}(\tilde{\Sigma}_n) < 1/2\} + \sup_{g_0 \in \mathcal{F}_d^{0,I}} \mathbb{P}_{g_0}\{\lambda_{\mathrm{max}}(\tilde{\Sigma}_n) > 3/2\} \\
&+ \sup_{g_0 \in \mathcal{F}_d^{0,I}} \mathbb{P}_{g_0}(\|\bar{Z}\| > 1/10) + \sup_{g_0 \in \mathcal{F}_d^{0,I}} \mathbb{P}_{g_0}\biggl(\biggl|\frac{1}{n}\sum_{i=1}^n \bigl\{\|Z_i\|^4 - \mathbb{E}(\|Z_1\|^4)\bigr\}\biggr| > \frac{\tau_{4,d}}{2}\biggr) = O(n^{-1}),
\end{align*}
using very similar arguments to those used above, as well as Chebychev's inequality for the last term.
\end{proof}

\begin{thm}[\citet{vandeGeer2000}, Theorem~7.4]
\label{Thm:vdG}
Let $\mathcal{F}$ denote a class of (Lebesgue) densities on $\mathbb{R}^d$, let $X_1,X_2,\ldots$ be independent and identically distributed with density $f_0 \in \mathcal{F}$, and let $\hat{f}_n$ denote a maximum likelihood estimator of $f_0$ based on $X_1,\ldots,X_n$.  Write $\bar{\mathcal{F}} := \bigl\{\bigl(\frac{f+f_0}{2}\bigr):f \in \mathcal{F}\bigr\}$, and let
\[
J_{[]}(\delta,\bar{\mathcal{F}},h) := \max\biggl\{\int_{\delta^2/2^{13}}^\delta \sqrt{\log N_{[]}(u,\bar{\mathcal{F}},h)} \, du \, , \, \delta\biggr\}.
\]
If $(\delta_n)$ is such that $2^{-16}n^{1/2}\delta_n^2 \geq J_{[]}(\delta_n,\bar{\mathcal{F}},h)$, then for all $t \geq \delta_n$,
\[
\mathbb{P}_{f_0}\{h(\hat{f}_n,f_0) \geq 2^{1/2}t\} \leq 2^{13/2}\sum_{s=0}^\infty \exp\biggl(-\frac{2^{2s}nt^2}{2^{27}}\biggr).
\]
\end{thm}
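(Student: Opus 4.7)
The plan is to follow the three-step template for Hellinger-rate theorems of maximum likelihood: derive a \emph{basic inequality} from the MLE characterisation, prove a \emph{Bernstein-type chaining maximal inequality} over bracketing covers of $\bar{\mathcal{F}}$, and conclude by a \emph{peeling argument} over shells of $h(\bar{f}_n,f_0)$, which produces the shell-by-shell exponential sum in the conclusion. This is the scheme of Chapter~7 of \citet{vandeGeer2000}; what follows is an outline of the route in our notation.

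\emph{Basic inequality.} Set $\bar{f}_n := (\hat{f}_n+f_0)/2 \in \bar{\mathcal{F}}$ and, for each $f \in \bar{\mathcal{F}}$, let $m_f := \tfrac{1}{2}\log(f/f_0)$ (well-defined since elements of $\bar{\mathcal{F}}$ are bounded below by $f_0/2$ on $\{f_0>0\}$). Two facts drive the argument. Because $\hat{f}_n$ maximises the likelihood, $\mathbb{P}_n\log(\hat{f}_n/f_0) \geq 0$; the pointwise inequality $\log((a+b)/2) \geq (\log a + \log b)/2$ then yields $\mathbb{P}_n m_{\bar{f}_n} \geq 0$. On the other hand, applying $\log x \leq x-1$ with $x = \sqrt{\bar{f}_n/f_0}$ gives
\[
\mathbb{E}_{f_0} m_{\bar{f}_n} \leq \mathbb{E}_{f_0}\bigl[\sqrt{\bar{f}_n/f_0}\bigr] - 1 = -\tfrac{1}{2}h^2(\bar{f}_n,f_0).
\]
Subtracting produces the basic inequality $\tfrac{1}{2}h^2(\bar{f}_n,f_0) \leq (\mathbb{P}_n-\mathbb{E}_{f_0})m_{\bar{f}_n}$. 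A pointwise calculation using $\sqrt{\bar{f}}\geq (\sqrt{f}+\sqrt{f_0})/2$ and $\bar{f}\geq f_0/2$ gives $h(\hat{f}_n,f_0)\leq 4h(\bar{f}_n,f_0)$, so it suffices throughout to work with $h(\bar{f}_n,f_0)$.

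\emph{Bernstein chaining on a shell.} The function $m_f$ is bounded below by $-\tfrac{1}{2}\log 2$; the variance bound $\mathbb{E}_{f_0}m_f^2 \leq C h^2(\bar{f},f_0)$ follows from $\sqrt{\bar{f}/f_0}\geq 2^{-1/2}$ together with $|\log x|\leq |x-1|/\min(x,1)$; and any Hellinger bracket $(f^L,f^U)$ for $\bar{\mathcal{F}}$ with $h(f^U,f^L)\leq \epsilon$ produces a bracket $(m_{f^L},m_{f^U})$ whose $L_2(f_0)$-gap is of order $\epsilon$. For $R>0$, set $\bar{\mathcal{F}}_R := \{f\in\bar{\mathcal{F}}:h(f,f_0)\leq R\}$. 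Bracket-by-bracket Bernstein estimates along a dyadic chain of levels $R, R/2, R/4, \ldots$, terminated at $R^2/2^{13}$ (the level at which the linear Bernstein contribution, arising from the lower boundedness of $m_f$, would begin to outpace the quadratic one; this is precisely why the lower limit of integration in $J_{[]}$ is $\delta^2/2^{13}$ rather than zero), combine via a generic maximal inequality to give, whenever $R$ satisfies $R^2 \geq 2^{-16}n^{-1/2}J_{[]}(R,\bar{\mathcal{F}},h)$,
\[
\mathbb{P}\!\biggl(\sup_{f\in\bar{\mathcal{F}}_R}(\mathbb{P}_n-\mathbb{E}_{f_0})m_f \geq \tfrac{R^2}{8}\biggr) \leq 2^{13/2}\exp\!\left(-\frac{nR^2}{2^{27}}\right).
\]

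\emph{Peeling and main obstacle.} Fix $t\geq\delta_n$ and, for $s\geq 0$, let $A_s := \{2^{s-3/2}t \leq h(\bar{f}_n,f_0) < 2^{s-1/2}t\}$. Since $h(\hat{f}_n,f_0)\leq 4h(\bar{f}_n,f_0)$, the event $\{h(\hat{f}_n,f_0)\geq 2^{1/2}t\}$ is contained in $\bigcup_{s\geq 0}A_s$. On $A_s$ the radius $R_s := 2^{s-1/2}t$ satisfies $\bar{f}_n\in\bar{\mathcal{F}}_{R_s}$, and the basic inequality gives $(\mathbb{P}_n-\mathbb{E}_{f_0})m_{\bar{f}_n}\geq \tfrac{1}{2}(2^{s-3/2}t)^2 = R_s^2/8$. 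The admissibility condition $R_s^2\geq 2^{-16}n^{-1/2}J_{[]}(R_s,\bar{\mathcal{F}},h)$ holds at $s=0$ by the hypothesis on $\delta_n$ and propagates to $s\geq 1$ because $\delta\mapsto J_{[]}(\delta,\bar{\mathcal{F}},h)/\delta^2$ is (up to constants) non-increasing in $\delta$, so doubling $R_s$ at most doubles $J_{[]}$ while quadrupling $R_s^2$. The chaining tail bound then yields $\mathbb{P}(A_s) \leq 2^{13/2}\exp(-nR_s^2/2^{27})$, and summing over $s\geq 0$ (up to a harmless adjustment of the numerical constants) produces the stated inequality. The decisive technical obstacle is the chaining estimate itself: because $m_f$ is only bounded below, one must use Bernstein's inequality with an $L_2(f_0)$-variance proxy rather than an $L_\infty$ proxy, and the variance term has to dominate the linear term along the entire dyadic chain; tracking this dominance precisely is what forces both the lower truncation $R^2/2^{13}$ in the definition of $J_{[]}$ and the explicit numerical constants $2^{-16}$, $2^{13/2}$ and $2^{27}$ appearing in the conclusion.
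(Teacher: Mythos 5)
The paper does not prove this statement at all: it is imported verbatim (modulo a Hellinger normalisation and the remark that one may take $C=2^{13/2}$) from Theorem~7.4 of \citet{vandeGeer2000}. Your proposal is therefore not an alternative route but a reconstruction of the cited proof's architecture, and at that level it is accurate: the basic inequality $\tfrac{1}{2}h^2(\bar{f}_n,f_0)\leq(\mathbb{P}_n-\mathbb{E}_{f_0})m_{\bar{f}_n}$, the comparison $h(\hat{f}_n,f_0)\leq 4h(\bar{f}_n,f_0)$, the lower bound $m_f\geq-\tfrac12\log 2$, the variance bound $\mathbb{E}_{f_0}m_f^2\leq 2h^2(f,f_0)$, and the peeling over shells $A_s$ are all correct and are exactly the ingredients of van de Geer's Chapter~7 argument.

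As a standalone proof, however, two things are missing. First, the entire technical content is concentrated in the displayed chaining bound $\mathbb{P}(\sup_{f\in\bar{\mathcal{F}}_R}(\mathbb{P}_n-\mathbb{E}_{f_0})m_f\geq R^2/8)\leq 2^{13/2}\exp(-nR^2/2^{27})$, which you assert rather than prove; this is van de Geer's Theorem~5.11, and without it nothing is established. Related to this, the passage from a Hellinger bracket $[f^L,f^U]$ for $\bar{\mathcal{F}}$ to a bracket for $m_f$ needs care: $f^L$ need not be bounded below by $f_0/2$ (it can vanish), so $m_{f^L}$ can be $-\infty$; one must replace $f^L$ by $\max(f^L,f_0/2)$, which is still a valid lower bracket on $\bar{\mathcal{F}}$. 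Second, the specific constants are not actually verified, and your own peeling arithmetic does not quite deliver them: with $R_s=2^{s-1/2}t$ you obtain $nR_s^2/2^{27}=2^{2s}nt^2/2^{28}$, i.e.\ a \emph{weaker} exponent than the stated $2^{2s}nt^2/2^{27}$. Since the paper's sole added value over the citation is precisely the claim that these explicit constants are attainable, "a harmless adjustment of the numerical constants" is the one thing you cannot wave away here; you would need either to track the constants through the chaining argument or to restate the conclusion with your own constants and check that the downstream applications in Theorem~\ref{Thm:Main} survive (they would, since only the orders of magnitude matter there).
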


\end{document}